\newtheorem{theorem}{Theorem}[section]
\newtheorem{proposition}[theorem]{Proposition}
\newtheorem{lemma}[theorem]{Lemma}
\newtheorem{corollary}[theorem]{Corollary}
\theoremstyle{definition}
\newtheorem{definition}[theorem] {Definition}
\newtheorem{claim} {Claim}
\theoremstyle{remark}
\newtheorem{remark} {Remark}
\newtheorem{example} {Example}
\numberwithin{equation}{section}
\begin{document}

\title[Multiresolution expansions of distributions]{Multiresolution expansions of distributions: Pointwise convergence and quasiasymptotic behavior}

\author[S. Kostadinova]{Sanja Kostadinova}
\address{S. Kostadinova, Faculty of Electrical Engineering and Information Technologies, Ss. Cyril and Methodius University, Rugjer Boshkovik bb, 1000 Skopje, Macedonia}
\email{ksanja@feit.ukim.edu.mk}

\author[J. Vindas]{Jasson Vindas}
\address{J. Vindas\\ Department of Mathematics\\
Ghent University
\\ Krijgslaan 281 Gebouw S22\\
B 9000 Gent\\
Belgium}
\email{jvindas@cage.Ugent.be}

\subjclass[2010]{Primary 42C40, 46F12. Secondary 26A12, 41A60, 46F10}
\keywords{multiresolution analysis (MRA); pointwise convergence of multiresolution expansions; quasiasymptotics; $\alpha$-density points; distributions of superexponential growth; tempered distributions; regularly varying functions; asymptotic behavior of generalized functions}

\begin{abstract} In several variables, we prove the pointwise convergence of multiresolution expansions to the distributional point values of tempered distributions and distributions of superexponential growth. The article extends and improves earlier results by G. G. Walter and B. K. Sohn and D. H. Pahk that were shown in one variable. We also provide characterizations of the quasiasymptotic behavior of distributions at finite points and discuss connections with $\alpha$-density points of measures.
\end{abstract}

\maketitle

\section{Introduction}\label{introduction}
The purpose of this article is to study the pointwise behavior of Schwartz distributions, in several variables, via multiresolution expansions. In particular, we shall extend and improve results from \cite{teofanov2,Sohn3,Sohn2,Walter2}.

The notion of multiresolution analysis (MRA) was introduced by Mallat and Meyer as a natural approach to the construction of orthogonal wavelets \cite{9,meyer1992}. Approximation properties of multiresolution expansions in function and distribution spaces have been extensively investigated, see e.g. \cite{meyer1992}. The problem of pointwise convergence of multiresolution expansions is very important from a computational point of view and has also been studied by many authors. In \cite{kkr} Kelly, Kon, and Raphael showed that
the multiresolution expansion of a function $f\in L^{p}(\mathbb{R}^{n})$ ($1\leq p\leq \infty$) converges almost everywhere; in fact, at every Lebesgue point of $f$. Related pointwise convergence questions have been investigated by Tao \cite{tao wavelets} and Zayed \cite{zayed}.

Walter was the first to study the pointwise convergence of multiresolution expansions for tempered distributions. Under mild conditions, he proved \cite{Walter2} (cf. \cite{Walter1}) in dimension 1 that the multiresolution expansion of a tempered distribution is convergent at every point where $f\in\mathcal{S}'(\mathbb{R})$ possesses a distributional point value. The notion of distributional point value for generalized functions was introduced by \L ojasiewicz \cite{lojasiewicz,lojasiewicz2}. Not only is this concept applicable to distributions that might not even be locally integrable, but also includes the Lebesgue points of locally integrable functions as particular instances. Interestingly, the distributional point values of tempered distributions can be characterized by the pointwise Fourier inversion formula in a very precise fashion \cite{vindas-estrada,vindas-estradaC}, but in contrast to multiresolution expansions, one should employ summability methods in the case of Fourier transforms and Fourier series. The problem of  pointwise summability of distribution expansions with respect to various orthogonal systems has been considered by Walter in \cite{walter1966}.  

The result of Walter on pointwise convergence of multiresolution expansions was generalized by Sohn and Pahk \cite{Sohn2} to distributions of superexponential growth, that is, elements of $\mathcal{K}'_{M}(\mathbb{R})$ (see Section \ref{preli} for the distribution spaces employed in this article). The important case $M(\mathbf{x})=|\mathbf{x}|^{p}$, $p>1$, of $\mathcal{K}'_{M}(\mathbb{R}^{n})$ was introduced by Sznajder and Ziele\'{z}ny in connection with solvability questions for convolution equations \cite{SZ}.

The first goal of this article is to extend the results from \cite{Walter2,Sohn2} to the multidimensional case. In particular, we shall show the following result. Given an MRA $\{V_{j}\}_{j\in\mathbb{Z}}$ of $L^{2}(\mathbb{R}^{n})$, we denote by $q_{j}$ the orthogonal projection onto $V_{j}$. If the MRA admits a scaling function from $\mathcal{S}(\mathbb{R}^{n})$, then $q_{j}f$ makes sense for $f\in\mathcal{S}'(\mathbb{R}^{n})$ (cf. Section \ref{MRA}).
\begin{theorem}
\label{theorem i 1} Let $f\in\mathcal{S}'(\mathbb{R}^{n})$. Suppose that the MRA $\{V_{j}\}_{j\in\mathbb{Z}}$ admits a scaling function in $\mathcal{S}(\mathbb{R}^{n})$, then
$$
\lim_{j\to\infty} (q_{j}f)(\mathbf{x}_0)=f(\mathbf{x}_{0})
$$
at every point $\mathbf{x}_{0}$ where the distributional point value of $f$ exists.
\end{theorem}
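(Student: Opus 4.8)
The plan is to transfer the problem to the convergence of the dilates of $f$ and then to tame the ``moving test functions'' coming from the reproducing kernel by means of a Banach--Steinhaus argument. We may assume the scaling function $\varphi\in\mathcal{S}(\mathbb{R}^n)$ to be orthonormal, since orthonormalization preserves membership in $\mathcal{S}(\mathbb{R}^n)$; then $q_j$ has kernel $2^{jn}q(2^j\mathbf{x},2^j\mathbf{y})$ with $q(\mathbf{x},\mathbf{y})=\sum_{\mathbf{k}\in\mathbb{Z}^n}\varphi(\mathbf{x}-\mathbf{k})\overline{\varphi(\mathbf{y}-\mathbf{k})}$. From Section~\ref{MRA} we may use that $q$ is smooth, that $q(\mathbf{x},\mathbf{y})$ decays rapidly in $\mathbf{x}-\mathbf{y}$ uniformly for $\mathbf{x}$ in bounded sets, and that $\mathbf{x}\mapsto q(\mathbf{x},\mathbf{x}+\mathbf{v})$ is $\mathbb{Z}^n$-periodic. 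Writing $f_{\mathbf{x}_0}(\mathbf{u}):=f(\mathbf{x}_0+\mathbf{u})$ and carrying out the substitution $\mathbf{y}=\mathbf{x}_0+2^{-j}\mathbf{v}$ in the duality pairing defining $(q_jf)(\mathbf{x}_0)=\langle f(\mathbf{y}),2^{jn}q(2^j\mathbf{x}_0,2^j\mathbf{y})\rangle$, one obtains
\begin{equation*}
(q_jf)(\mathbf{x}_0)=\big\langle f_{\mathbf{x}_0}(2^{-j}\mathbf{v}),\,\psi_j(\mathbf{v})\big\rangle,\qquad \psi_j(\mathbf{v}):=q\big(2^j\mathbf{x}_0,\ 2^j\mathbf{x}_0+\mathbf{v}\big).
\end{equation*}

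Next I would check that $\{\psi_j\}_j$ is a bounded subset of $\mathcal{S}(\mathbb{R}^n)$. By the $\mathbb{Z}^n$-periodicity of $\mathbf{x}\mapsto q(\mathbf{x},\mathbf{x}+\mathbf{v})$ we may replace $2^j\mathbf{x}_0$ by its fractional part $\mathbf{t}_j\in[0,1)^n$, so $\psi_j=q(\mathbf{t}_j,\mathbf{t}_j+\cdot)$ belongs to $\{q(\mathbf{t},\mathbf{t}+\cdot):\mathbf{t}\in[0,1]^n\}$; differentiating the series termwise and using the rapid decay of $\varphi$ and of all its derivatives, one bounds each Schwartz seminorm of $q(\mathbf{t},\mathbf{t}+\cdot)$ uniformly in $\mathbf{t}\in[0,1]^n$, which gives the desired boundedness. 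On the other hand, since $f\in\mathcal{S}'(\mathbb{R}^n)$, the existence of the distributional point value $f(\mathbf{x}_0)=\gamma$ implies (as recalled in Section~\ref{preli}) that $f_{\mathbf{x}_0}(\varepsilon\mathbf{v})\to\gamma$ in $\mathcal{S}'(\mathbb{R}^n_{\mathbf{v}})$ as $\varepsilon\to0^+$; specializing to $\varepsilon=2^{-j}$ yields $f_{\mathbf{x}_0}(2^{-j}\mathbf{v})\to\gamma$ in $\mathcal{S}'(\mathbb{R}^n)$.

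The crux is now an equicontinuity argument. A weak-$*$ convergent sequence in $\mathcal{S}'(\mathbb{R}^n)$ is equicontinuous by the Banach--Steinhaus theorem, and since $\mathcal{S}(\mathbb{R}^n)$ is a Montel space its bounded sets are relatively compact; hence $f_{\mathbf{x}_0}(2^{-j}\mathbf{v})\to\gamma$ uniformly on bounded subsets of $\mathcal{S}(\mathbb{R}^n)$. Applying this to the bounded family $\{\psi_j\}_j$,
\begin{equation*}
\Big|\,(q_jf)(\mathbf{x}_0)-\gamma\!\int_{\mathbb{R}^n}\!\psi_j(\mathbf{v})\,d\mathbf{v}\,\Big|=\Big|\big\langle f_{\mathbf{x}_0}(2^{-j}\mathbf{v})-\gamma,\ \psi_j(\mathbf{v})\big\rangle\Big|\ \longrightarrow\ 0 .
\end{equation*}
Finally, $\int_{\mathbb{R}^n}\psi_j(\mathbf{v})\,d\mathbf{v}=\int_{\mathbb{R}^n}q(2^j\mathbf{x}_0,\mathbf{w})\,d\mathbf{w}=\overline{\hat\varphi(\mathbf{0})}\sum_{\mathbf{k}\in\mathbb{Z}^n}\varphi(2^j\mathbf{x}_0-\mathbf{k})=|\hat\varphi(\mathbf{0})|^2=1$, using Poisson summation ($\sum_{\mathbf{k}}\varphi(\cdot-\mathbf{k})$ being the constant $\hat\varphi(\mathbf{0})$, as $\hat\varphi$ vanishes at the nonzero points of the dual lattice) together with the normalization $|\hat\varphi(\mathbf{0})|=1$. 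Therefore $(q_jf)(\mathbf{x}_0)\to\gamma=f(\mathbf{x}_0)$.

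The routine ingredients are the kernel estimates (available from Section~\ref{MRA}) and the evaluation $\int_{\mathbb{R}^n}q(\mathbf{x},\cdot)=1$. The real obstacle is that in the pairing $\langle f_{\mathbf{x}_0}(2^{-j}\mathbf{v}),\psi_j(\mathbf{v})\rangle$ the ``test function'' $\psi_j$ depends on $j$: it is precisely the combination of the periodicity of $q$ along its diagonal (which keeps $\{\psi_j\}$ inside a \emph{bounded} subset of $\mathcal{S}(\mathbb{R}^n)$) with the Montel property of $\mathcal{S}(\mathbb{R}^n)$ (which promotes weak-$*$ convergence to uniform convergence on bounded sets) that makes the limit survive. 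For dyadic $\mathbf{x}_0$ the sequence $\psi_j$ is eventually constant and this point is vacuous; the argument above covers an arbitrary $\mathbf{x}_0\in\mathbb{R}^n$.
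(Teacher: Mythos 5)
Your argument is correct, and its skeleton coincides with the paper's (Theorem \ref{theorem i 1} is obtained there as a corollary of Theorem \ref{pvth2}): the same substitution $(q_jf)(\mathbf{x}_0)=\langle f(\mathbf{x}_{0}+2^{-j}\mathbf{v}),\psi_j(\mathbf{v})\rangle$ as in (\ref{pveq10}), the same observation that $\{\psi_j\}_j$ is bounded in the Schwartz class (the paper gets this from the kernel estimates (\ref{estimate kernel}), you from periodicity along the diagonal plus rapid decay — both fine), and the same normalization $\int\psi_j=1$, which the paper takes from (\ref{integral polynomial}) and you rederive by Poisson summation using the standard MRA facts $\hat\phi(2\pi\mathbf{k})=0$ for $\mathbf{k}\neq\mathbf{0}$ and $|\hat\phi(\mathbf{0})|=1$. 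Where you genuinely diverge is the crucial uniformity step. The paper proves Theorem \ref{pvth1} structurally, via the \L ojasiewicz description (\ref{pveqmeasure1})--(\ref{pveqmeasure2}) of a point value of order $\leq r$, obtaining \emph{strong} convergence of $f(\mathbf{x}_{0}+\varepsilon\,\cdot)$ in the finite-order duals $\mathcal{S}'_{r}$ and $\mathcal{K}'_{M,r}$. You instead take as input the cited fact (from \cite{vindas-estradaSupp}, recalled at the beginning of Section \ref{pv} — not in Section \ref{preli} as you say, a harmless slip) that for $f\in\mathcal{S}'(\mathbb{R}^{n})$ the point value holds in $\mathcal{S}'(\mathbb{R}^{n})$, and then upgrade weak$^{\ast}$ convergence to uniform convergence over the bounded family $\{\psi_j\}$ by Banach--Steinhaus equicontinuity plus the Montel property of $\mathcal{S}(\mathbb{R}^{n})$. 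That upgrade is valid and gives a clean, short proof of Theorem \ref{theorem i 1}; be aware, though, that the real depth of the statement is hidden in the cited $\mathcal{S}'$-convergence result (which is essentially Corollary \ref{pvc1} of the paper), and that your Montel shortcut does not transfer to the spaces $\mathcal{S}_{r}(\mathbb{R}^{n})$ or $\mathcal{K}_{M,r}(\mathbb{R}^{n})$, which are not Montel. Consequently your route proves Theorem \ref{theorem i 1} but cannot yield the finer Theorem \ref{pvth2} (pointwise convergence under an $r$-regular or $(M,r)$-regular MRA with control of the order of the point value), which is what the paper's structural approach via Theorem \ref{pvth1} buys, and which underlies Theorem \ref{theorem i 2} and the superexponential results.
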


Theorem \ref{theorem i 1} and more general pointwise converge results will be derived in Section \ref{pv}. Our approach differs from that of Walter and Sohn and Pahk. The distributional point values are defined by distributional limits, involving certain local averages with respect to test functions from the Schwartz class of compactly supported smooth functions (cf. the limit (\ref{pveq3}) in Section \ref{pv}). We will show a general result that allows us to employ test functions in wider classes for such averages (Theorem \ref{pvth1}). This will lead to quick proofs of various pointwise convergence results for multiresolution expansions of distributions.  Actually, our results improve those from \cite{Walter2,Sohn2}, even in the one-dimensional case, because our hypotheses on the order of distributional point values are much weaker. For instance, the next theorem on convergence of multiresolution expansions to Lebesgue density points of measures appears to be new and is not covered by the results from \cite{kkr,Walter2}. Let us define the notion of Lebesgue density points. We denote by $m$ the Lebesgue measure on $\mathbb{R}^{n}$ and $B(x_{0},\varepsilon)$ stands for the Euclidean ball with center $\mathbf{x}_{0}\in\mathbb{R}^{n}$ and radius $\varepsilon>0$. A sequence $\{B_{\nu}\}_{\nu=0}^{\infty}$ of Borel subsets of $\mathbb{R}^{n}$ is said to \emph{shrink regularly} to a point $\mathbf{x}_{0}\in\mathbb{R}^{n}$ if there is a sequence of radii $\{\varepsilon_{\nu}\}_{\nu=0}^{\infty}$ such that $\lim_{\nu\to\infty} \varepsilon_{\nu}=0$, $B_{\nu}\subseteq B(\mathbf{x}_{0},\varepsilon_{\nu})$ for all $\nu$, and there is a constant $a>0$ such that $m(B_{\nu})\geq a \varepsilon_{\nu}^{n}$ for all $\nu$. We write $B_{\nu}\to \mathbf{x}_{0}$ regularly. 

\begin{definition}\label{definition i}
We call $\mathbf{x}_{0}$ a \emph{Lebesgue density point} of a (complex) Radon measure $\mu$ if there is $\gamma_{\mathbf{x}_{0}}$ such that
\begin{equation}
\label{eq i 1}
\lim_{\nu\to\infty} \frac{\mu(B_{\nu})}{m(B_{\nu})}=\gamma_{\mathbf{x}_{0}},
\end{equation}
for every sequence of Borel sets $\{B_{\nu}\}_{\nu=0}^{\infty}$ such that $B_{\nu}\to \mathbf{x}_{0}$ regularly. 
\end{definition}
It is well known that almost every point $\mathbf{x}_{0}$ (with respect to the Lebesgue measure) is a Lebesgue density point of $\mu$. If $d\mu=fdm+ d\mu_{s}$ is the Lebesgue decomposition of $\mu$, namely, $f\in L^{1}_{loc}(\mathbb{R}^{n})$ and $\mu_{s}$ is a singular measure, then $f(\mathbf{x}_{0})=\gamma_{\mathbf{x}_{0}}$ a.e. with respect to $m$ \cite[Chap. 7]{RudinRC}. If $\mu$ is absolutely continuous with respect to the Lebesgue measure, then a Lebesgue density point of $\mu$ amounts to the same as a Lebesgue point of its Radon-Nikodym derivative $d\mu/dm$.

\begin{theorem}
\label{theorem i 2} Suppose that the MRA $\{V_{j}\}_{j\in\mathbb{Z}}$ has a continuous scaling function $\phi$ such that $\lim_{|\mathbf{x}|\to\infty}|\mathbf{x}|^{l}\phi(\mathbf{x})=0$, $\forall l\in\mathbb{N}$. Let $\mu$ be a tempered Radon measure on $\mathbb{R}^{n}$, that is, one that satisfies
\begin{equation}
\label{eq i 3}
\int_{\mathbb{R}^{n}}\frac{d|\mu|(\mathbf{x})}{(1+|\mathbf{x}|)^{k}}<\infty
\end{equation}
 for some $k\geq 0$. Then
\begin{equation}
\label{eq i 2}
\lim_{j\to\infty} (q_{j}\mu)(\mathbf{x}_0)=\gamma_{\mathbf{x}_{0}}
\end{equation}
at every Lebesgue density point $\mathbf{x}_{0}$ of $\mu$, i.e., at every point where (\ref{eq i 1}) holds for every $B_{\nu}\to \mathbf{x}_{0}$ regularly. In particular, the limit (\ref{eq i 2}) exists and $\gamma_{\mathbf{x}_{0}}=f(\mathbf{x}_{0})$ almost everywhere (with respect to the Lebesgue measure), where $d\mu=fdm+ d\mu_{s}$ is the Lebesgue decomposition of $\mu$. 
\end{theorem}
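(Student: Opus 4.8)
The plan is to express $q_{j}$ through the reproducing kernel of $V_{j}$, to isolate a quantitative (measure-theoretic) reformulation of the density-point hypothesis, and then to run a standard approximate-identity estimate.

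Recall (cf.\ Section~\ref{MRA}) that $q_{j}$ is the integral operator with kernel $q_{j}(\mathbf{x},\mathbf{y})=2^{jn}q(2^{j}\mathbf{x},2^{j}\mathbf{y})$, where $q(\mathbf{x},\mathbf{y})=\sum_{\mathbf{k}\in\mathbb{Z}^{n}}\phi(\mathbf{x}-\mathbf{k})\overline{\phi(\mathbf{y}-\mathbf{k})}$ is the reproducing kernel of $V_{0}$. Since $\phi$ is continuous and satisfies the hypothesized faster-than-polynomial decay, a routine majorization of the defining series (for each $\mathbf{k}$ either $|\mathbf{x}-\mathbf{k}|$ or $|\mathbf{x}+\mathbf{t}-\mathbf{k}|$ is $\ge|\mathbf{t}|/2$) produces a radially decreasing, rapidly decreasing $\theta$ with $\sup_{\mathbf{x}}|q(\mathbf{x},\mathbf{x}+\mathbf{t})|\le\theta(\mathbf{t})$; together with the normalization $\int_{\mathbb{R}^{n}}q(\mathbf{x},\mathbf{y})\,d\mathbf{y}\equiv1$ this makes $(q_{j}\mu)(\mathbf{x}_{0})=\int_{\mathbb{R}^{n}}q_{j}(\mathbf{x}_{0},\mathbf{y})\,d\mu(\mathbf{y})$ a well-defined continuous function (the rapid decay of $\theta$ absorbs the tempered bound (\ref{eq i 3})), and, since $\int_{\mathbb{R}^{n}}q_{j}(\mathbf{x}_{0},\mathbf{y})\,dm(\mathbf{y})=1$,
\[
(q_{j}\mu)(\mathbf{x}_{0})-\gamma_{\mathbf{x}_{0}}=\int_{\mathbb{R}^{n}}q_{j}(\mathbf{x}_{0},\mathbf{y})\,d\nu(\mathbf{y}),\qquad \nu:=\mu-\gamma_{\mathbf{x}_{0}}m.
\]

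The crux — and what I expect to be the main obstacle — is to upgrade Definition~\ref{definition i} into the quantitative estimate
\[
|\nu|(B(\mathbf{x}_{0},\rho))=o(\rho^{n}),\qquad \rho\to0^{+};
\]
this is the exact analogue, for signed or complex measures, of the defining property of a Lebesgue point (and, conversely, it trivially implies (\ref{eq i 1}), so it is in fact a characterization). Here one genuinely uses that (\ref{eq i 1}) is demanded for \emph{every} regularly shrinking Borel set, not only for balls. I would argue by contradiction: if $|\nu|(B(\mathbf{x}_{0},\rho_{k}))\ge\delta\rho_{k}^{n}$ for some $\rho_{k}\downarrow0$, a Hahn decomposition of $\nu$ inside $B(\mathbf{x}_{0},\rho_{k})$ gives a Borel set $E_{k}\subseteq B(\mathbf{x}_{0},\rho_{k})$ with $|\nu(E_{k})|\ge(\delta/2)\rho_{k}^{n}$. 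Writing $\mu=f\,m+\mu_{s}$ and letting $S$ denote the $m$-null set carrying $\mu_{s}$: if $m(E_{k})\gtrsim\rho_{k}^{n}$ happens infinitely often, then $\{E_{k}\setminus S\}$ is regularly shrinking and contradicts (\ref{eq i 1}); in the remaining case $m(E_{k})=o(\rho_{k}^{n})$ one has to transfer the concentrated part of $|\nu|$ onto a regularly shrinking set, by means of the Lebesgue decomposition of $\nu$ (to locate an $m$-substantial subset of $B(\mathbf{x}_{0},\rho_{k})$ that avoids $S$ and on which $f-\gamma_{\mathbf{x}_{0}}$ is small) together with a Besicovitch/Vitali covering (to recapture the mass at a comparable scale), again producing a regularly shrinking set in conflict with (\ref{eq i 1}). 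Making this covering argument precise is the real work.

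Granting the estimate, the conclusion follows from a familiar splitting. Fix a small $\eta>0$. On $\{|\mathbf{y}-\mathbf{x}_{0}|>\eta\}$, the bound $|q_{j}(\mathbf{x}_{0},\mathbf{y})|\le2^{jn}\theta(2^{j}(\mathbf{x}_{0}-\mathbf{y}))\le C_{N}\,2^{-j(N-n)}|\mathbf{x}_{0}-\mathbf{y}|^{-N}$ (valid for any $N$ once $j$ is large), combined with $|\nu|\le|\mu|+|\gamma_{\mathbf{x}_{0}}|\,m$ and (\ref{eq i 3}), yields $\int_{|\mathbf{y}-\mathbf{x}_{0}|>\eta}q_{j}(\mathbf{x}_{0},\mathbf{y})\,d\nu(\mathbf{y})=O(2^{-j(N-n)})\to0$ as soon as $N>n$. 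On $\{|\mathbf{y}-\mathbf{x}_{0}|\le\eta\}$, decompose dyadically into $A_{j,l}=\{2^{l-1-j}<|\mathbf{y}-\mathbf{x}_{0}|\le2^{l-j}\}$ for $1\le l\le L(j)\asymp\log_{2}(\eta2^{j})$, with $A_{j,0}=B(\mathbf{x}_{0},2^{-j})$. On $A_{j,l}$ one has $|q_{j}(\mathbf{x}_{0},\mathbf{y})|\lesssim_{N}2^{jn}2^{-lN}$, while the lemma gives $|\nu|(A_{j,l})\le|\nu|(B(\mathbf{x}_{0},2^{l-j}))=\omega(2^{l-j})\,2^{n(l-j)}$ with $\omega(\rho)\to0$; hence
\[
\Bigl|\int_{|\mathbf{y}-\mathbf{x}_{0}|\le\eta}q_{j}(\mathbf{x}_{0},\mathbf{y})\,d\nu(\mathbf{y})\Bigr|\ \lesssim_{N}\ \sum_{l=0}^{L(j)}2^{-l(N-n)}\,\omega(2^{l-j}).
\]
With $N>n$ this series is dominated by a convergent geometric one; truncating it at a large fixed index and letting $j\to\infty$ (so that $2^{l-j}\to0$ for all retained indices) forces the near part to $0$ as well. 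Therefore $(q_{j}\mu)(\mathbf{x}_{0})\to\gamma_{\mathbf{x}_{0}}$, and the final ``in particular'' is then just the classical identification $\gamma_{\mathbf{x}_{0}}=f(\mathbf{x}_{0})$ for $m$-a.e.\ $\mathbf{x}_{0}$ already quoted before the theorem \cite[Chap.~7]{RudinRC}. (Alternatively one could route the last two displays through the general test-function principle of Section~\ref{pv}, applied to the uniformly bounded family $\{\,q(2^{j}\mathbf{x}_{0},2^{j}\mathbf{x}_{0}+\,\cdot\,)\,\}_{j}$, but the direct estimate is self-contained.)
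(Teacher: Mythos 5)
Your strategy is sound and in fact runs parallel to the paper's: there the theorem is obtained as the $r=0$ case of Theorem \ref{pvth2}, combined with the identification (Example \ref{example 1}, left to the reader) of Lebesgue density points of $\mu$ with points where $\mu$ has a distributional point value of order $0$; your kernel majorization, the normalization $\int_{\mathbb{R}^{n}}q_{0}(\mathbf{x},\mathbf{y})\,d\mathbf{y}=1$, and the far/near dyadic estimate amount to a self-contained unrolling of Proposition \ref{pvp1} and Theorem \ref{pvth1} for $r=0$, and that part of your argument is correct. The step you do not actually carry out is precisely the crux you single out: upgrading Definition \ref{definition i} to $|\nu|(B(\mathbf{x}_{0},\rho))=o(\rho^{n})$ with $\nu=\mu-\gamma_{\mathbf{x}_{0}}m$, which is condition (\ref{pveqmeasure2}) for $\alpha=0$ and hence the whole content of the reduction to an order-zero point value. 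In the case $m(E_{k})=o(\rho_{k}^{n})$ you only gesture at a Lebesgue-decomposition-plus-Besicovitch/Vitali argument and concede it is ``the real work,'' so as written the proof has a genuine gap at its central lemma.

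The gap is real but closes with a much simpler device than a covering theorem; no analysis of the singular part is needed. Suppose $|\nu|(B(\mathbf{x}_{0},\rho_{k}))\geq\delta\rho_{k}^{n}$ with $\rho_{k}\downarrow0$. Splitting $\nu$ into real and imaginary parts and taking a Hahn decomposition inside $B(\mathbf{x}_{0},\rho_{k})$, one finds (along a subsequence) Borel sets $E_{k}\subseteq B(\mathbf{x}_{0},\rho_{k})$ with $|\nu(E_{k})|\geq(\delta/4)\rho_{k}^{n}$. If $m(E_{k})\geq a\rho_{k}^{n}$ infinitely often, the sets $E_{k}$ shrink regularly and $|\nu(E_{k})|/m(E_{k})\geq\delta/(4c_{n})$, where $c_{n}:=m(B(\mathbf{0},1))$, contradicting (\ref{eq i 1}). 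If instead $m(E_{k})=o(\rho_{k}^{n})$ along a subsequence, pass to the complement $G_{k}:=B(\mathbf{x}_{0},\rho_{k})\setminus E_{k}$: then $m(G_{k})\geq\tfrac{1}{2}c_{n}\rho_{k}^{n}$ eventually, so $\{G_{k}\}$ shrinks regularly, while $\nu(G_{k})=\nu(B(\mathbf{x}_{0},\rho_{k}))-\nu(E_{k})$ and, since balls themselves shrink regularly, $\nu(B(\mathbf{x}_{0},\rho_{k}))=o(\rho_{k}^{n})$; hence $|\nu(G_{k})|\geq(\delta/8)\rho_{k}^{n}$ eventually, again contradicting (\ref{eq i 1}). (Your instinct that one must use arbitrary regularly shrinking Borel sets, not just balls, is correct -- this is exactly where it enters.) With this lemma in place, your approximate-identity computation, taking $N>\max(n,k)$ in the far region and summing the geometric series against the modulus $\omega$ in the near region, finishes the proof, and is essentially the paper's argument specialized to order zero.
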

Versions of Theorem \ref{theorem i 1} and \ref{theorem i 2} for distributions and measures of superexponential growth will also be proved in Section \ref{pv}. It is worth comparing Theorem \ref{theorem i 1} with Theorem \ref{theorem i 2}. On the one hand Theorem \ref{theorem i 1} requires more regularity from the MRA, but on the other hand, when applied to a tempered measure, it gives in turn a bigger set for the pointwise convergence (\ref{eq i 2}) of the multiresolution expansion of $\mu$, because the set where $\mu$ possesses distributional point values is larger than that of its Lebesgue density points.  

The second aim of this paper is to study the quasiasymptotic behavior of a distribution at a point through multiresolution expansions.
The quasiasymptotic behavior is a natural extension of \L ojasiewicz's notion of distributional point values. It was introduced by Zav'yalov in connection with various problems from quantum field theory \cite{VDZ} and basically measures the pointwise scaling asymptotic properties of a distribution via comparison with Karamata regularly varying functions. This pointwise notion for distributions has shown to be useful in the asymptotic analysis of many integral transforms for generalized functions (see, e.g., \cite{EK,PSV,VDZ,Zavialov1989} and references therein). We remark that the quasiasymptotic behavior is closely related to Meyer's pointwise weak scaling exponents \cite{meyer}. For studies about wavelet analysis and quasiasymptotics, we refer to \cite{teofanov2,teofanov1,Saneva1,SV,Sohn3,Vindas3}. The quasiasymptotic behavior has also found applications in several other areas, such as the asymptotic analysis of solutions to PDE, summability of Fourier integrals, Abelian and Tauberian theorems, and mathematical physics; see the monographs \cite{EK,PSV,VDZ} for an overview of such applications.

In \cite{teofanov2}, Pilipovi\'{c}, Taka\v{c}i, and Teofanov studied the quasiasymptotic properties of a tempered distribution $f$ in terms of its multiresolution expansion $\{q_{j}f\}$ with respect to an $r$-regular MRA. A similar study was carried out by Sohn \cite{Sohn3} for distributions of exponential type. In these works it was claimed that $q_{j}f$ has the same quasiasymptotic properties as $f$. Unfortunately, such results turn out to be false in general. In Section \ref{QB} we revisit the problem and provide an appropriate characterization of the quasiasymptotic behavior in terms of multiresolution expansions. As an application, we give an MRA criterion for the determination of (symmetric) $\alpha$-density points of measures. We mention that the notion of $\alpha$-density points is a basic concept in geometric measure theory, which is of great relevance for the study of fractal and rectificability properties of sets and Radon measures (cf. \cite{De lellis,mattila}). 

The article is organized as follows. Section \ref{preli} explains the spaces used in the paper. We study in Section \ref{MRA} the convergence of multiresolution expansions in various test function and distribution spaces. Section \ref{pv} treats the pointwise convergence of multiresolution expansions to the distributional point values of a distribution. Finally, Section \ref{QB} gives the asymptotic behavior of the sequence $\{q_{j}f(\mathbf{x}_{0})\}_{j\in\mathbb{N}}$ as $j\to\infty$ when $f$ has quasiasymptotic behavior at $\mathbf{x}_{0}$; we also provide there a characterization of the quasiasymptotic behavior in terms of multiresolution expansions and give an MRA sufficient condition for the existence of $\alpha$-density points of positive measures. 

%%%%%%%%%%%%%%%%%%%%%%%%%%%%%%%%%%%%%%%%%%%%%%%%%%%%%%%%%%%%%%%%%%%%%%%%%%%%%%%%%%%%%%%%%%%%%%%%%%%%%%%%

\section{Preliminaries}\label{preli}

%%%%%%%%%%%%%%%%%%%%%%%%%%%%%%%%%%%%%%%%%%%%%%%%%%%%%%%%%%%%%%%%%%%%%%%%%%%%%%%%%%%%%%%%%%%%%%%%%%%%%%%%%

In this section we explain the distribution spaces needed in this paper. We will follow the standard notation from distributional theory \cite{EK,gesh68, Schwartz}. The arrow ``$\hookrightarrow$'' in a expression $X\hookrightarrow Y$ means a dense and continuous linear embedding between topological vector spaces. For partial derivatives, we indistinctly use the two notations $\partial^{\alpha}\varphi=\varphi^{(\alpha)}$.

Let us introduce the distribution space $\mathcal K_{M}'(\mathbb {R}^{n})$. We begin with the test function space $\mathcal K_{M}(\mathbb{R}^{n})$. We shall assume that $M:[0,\infty)\to [0,\infty)$ is an continuous increasing function satisfying the following two conditions:
\begin{enumerate}
\item $M(t)+M(s)\leq M(t+s)$,
\item $M(t+s)\leq M(2t)+M(2s)$.
\end{enumerate}
It follows from (1) that $M(0)=0$ and the existence of $A>0$ such that $M(t)\geq A t$. Examples of $M$ are $M(t)=t^{p}$, $t>0$, with any $p\geq1$. More generally, any function of the form
$$M(t)=\int_{0}^{t}\eta(s)ds, \ \ \ t\geq0,$$
satisfies (1) and (2), provided that $\eta$ is a continuous non-decreasing function with $\eta(0)=0$ and $\lim_{t\to\infty}\eta(t)=\infty$. We extend $M$ to $\mathbb R^{n}$ and for simplicity we write $M(\mathbf{x}):=M(|\mathbf{x}|)$, $\mathbf{x}\in\mathbb{R}^{n}$. It is clear that $M$ then satisfies the condition (2) with $t$ and $s$ replaced by arbitrary points from $\mathbb{R}^{n}$. On the other hand, the superadditivity (1) can be lost for vector variables; however, the following weaker version of (2) obviously holds:
\begin{enumerate}
\item [(3)] $M(t \mathbf{x})+M(s \mathbf{x})\leq M((t+s)\mathbf{x})$, $\ \ \ t,s> 0$, $\mathbf{x}\in\mathbb{R}^{n}$.
\end{enumerate}

Using the function $M$, we define the following family of norms:
\begin{equation}\label{normv}\nu_{r,l}(\varphi):=\sup_{|\alpha|\leq r,\,\mathbf{x}\in {\mathbb R}^{n}}\,e^{M(l\mathbf{x})}|\varphi^{(\alpha)}(\mathbf{x})|,\,\quad r,l\in{\mathbb N}.\end{equation}
The test function space ${\mathcal K}_M(\mathbb R^n) $ consists of all those smooth functions $\varphi\in C^{\infty}(\mathbb R^n)$ for which all the norms (\ref{normv}) are finite. We call its strong dual ${\mathcal K}'_M(\mathbb {R}^{n} )$, the space of distributions of ``$M$-exponential'' growth at infinity. A standard argument shows that a distribution $f\in\mathcal{D}'(\mathbb{R}^{n})$ belongs to ${\mathcal K}'_M(\mathbb R^{n} )$ if and only if it has the form $f=\partial^\alpha_{\mathbf{x}}\left(e^{M(k\mathbf{x})}F(\mathbf{x})\right),$
where $k\in{\mathbb N}$, $\alpha\in \mathbb{N}^{n}$, and $F\in L^{\infty}({\mathbb R}^{n})\cap C(\mathbb{R}^{n})$.

Denote as ${\mathcal K}_{M,r,l}(\mathbb {R}^{n})$ the Banach space obtained as the completion of $\mathcal{D}(\mathbb{R}^{n})$ in the norm (\ref{normv}). It is clear that
$$
{\mathcal K}_{M,r,l}(\mathbb {R}^{n})=\{\varphi\in C^{r}(\mathbb{R}^{n}): \displaystyle\lim_{|\mathbf{x}| \to \infty}e^{M(l\mathbf{x})}\varphi^{(\alpha)}(\mathbf{x})=0, \: |\alpha|\leq r\}.$$ Set $\mathcal{K}_{M,r}(\mathbb{R}^{n})=\operatorname*{proj}\lim_{l\to\infty}\mathcal{K}_{M,r,l}(\mathbb{R}^{n})$. Note that
$$
\mathcal{K}_{M}(\mathbb{R}^{n})\hookrightarrow\cdots\hookrightarrow\mathcal{K}_{M,r+1}(\mathbb{R}^{n})\hookrightarrow\mathcal{K}_{M,r}(\mathbb{R}^{n})\hookrightarrow\cdots\hookrightarrow\mathcal{K}_{M,0}(\mathbb{R}^{n}),
$$
where each embedding in this projective sequence is compact, due to the Arzel\`{a}-Ascoli theorem and the property (3). Consequently, the embeddings $\mathcal {K}_{M,r}'(\mathbb{R}^{n})\rightarrow \mathcal {K}_{M,r+1}'(\mathbb {R}^{n})$ are also compact,
$$
\mathcal{K}_{M}(\mathbb{R}^{n})=\operatorname*{proj}\lim_{r\to\infty}\mathcal{K}_{M,r}(\mathbb{R}^{n}),$$
and
\begin{equation}
\label{equnion}
\mathcal{K}'_{M}(\mathbb{R}^{n})= \bigcup_{r\in\mathbb{N}}\mathcal{K}_{M,r}'(\mathbb{R}^{n})=\operatorname*{ind}\lim_{r\to\infty}\mathcal{K}_{M,r}'(\mathbb{R}^{n})
.
\end{equation}
Therefore $\mathcal{K}_{M}(\mathbb{R}^{n})$ is an FS-space and $\mathcal{K}'_{M}(\mathbb{R}^{n})$ a DFS-space. In particular, they are Montel and hence reflexive.

For the Schwartz spaces $\mathcal{S}(\mathbb{R}^{n})$ and $\mathcal{S}'(\mathbb{R}^{n})$, we can also employ useful projective and inductive presentations with similar compact inclusion relations. Define $\mathcal{S}_{r,l}(\mathbb{R}^{n})$ as the completion of $\mathcal{D}(\mathbb{R}^{n})$ with the norm
$$
\rho_{r,\:l}(\varphi):=\sup_{|\alpha|\leq r,\: \mathbf{x}\in\mathbb{R}^{n}} (1+|\mathbf{x}|)^{l}|\varphi^{(\alpha)}(\mathbf{x})|, \  \  \ r,l\in\mathbb{N},
$$
and set $\mathcal{S}_{r}(\mathbb{R}^{n})= \operatorname*{proj}\lim_{l\to\infty}\mathcal{S}_{r,l}(\mathbb{R}^{n})$. Thus,
$\mathcal{S}(\mathbb{R}^{n})=\operatorname*{proj}\lim_{r\to\infty}\mathcal{S}_{r}(\mathbb{R}^{n})$
and
\begin{equation}
\label{equnion2}
\mathcal{S}'(\mathbb{R}^{n})=\bigcup_{r\in\mathbb{N}}\mathcal{S}'_{r}(\mathbb{R}^{n})=\operatorname*{ind}\lim_{r\to\infty}\mathcal{S}'_{r}(\mathbb{R}^{n}).
\end{equation}

The following simple but useful lemma describes convergence of filters with bounded bases in the Fr\'{e}chet space $\mathcal {K}_{M,r}(\mathbb {R}^{n})$ (resp. $\mathcal {S}_{r}(\mathbb {R}^{n})$), we leave the proof to the reader. Recall that the canonical topology on $C^{r}(\mathbb{R}^n)$ is that of uniform convergence of functions and all their partial derivatives up to order $r$ on compact subsets.

\begin{lemma}\label{lemma 2.1}
Let $\mathcal{F}$ be a filter with bounded basis over $\mathcal {K}_{M,r}(\mathbb R^n)$ (resp. $\mathcal {S}_{r}(\mathbb {R}^{n})$). Then $\mathcal{F}\to \varphi$ in $\mathcal {K}_{M,r}(\mathbb R^n)$ (resp. in $\mathcal {S}_{r}(\mathbb {R}^{n})$) if and only if $\mathcal{F}\to \varphi$ in $C^ {r}(\mathbb{R}^n)$.
\end{lemma}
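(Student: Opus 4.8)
The plan is to treat both cases at once. Write $\sigma_{l}$ for the defining norm ($\nu_{r,l}$ on $\mathcal{K}_{M,r}(\mathbb{R}^{n})$, resp.\ $\rho_{r,l}$ on $\mathcal{S}_{r}(\mathbb{R}^{n})$) and $w_{l}$ for the corresponding radial weight ($w_{l}(\mathbf{x})=e^{M(l\mathbf{x})}$, resp.\ $w_{l}(\mathbf{x})=(1+|\mathbf{x}|)^{l}$), so that $\sigma_{l}(\psi)=\sup_{|\alpha|\le r}\sup_{\mathbf{x}\in\mathbb{R}^{n}}w_{l}(\mathbf{x})\,|\psi^{(\alpha)}(\mathbf{x})|$. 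The only features of the weights I will use are that each $w_{l}$ is continuous with $w_{l}\ge 1$, and that for every $l$ one has $w_{l}(\mathbf{x})/w_{l+1}(\mathbf{x})\to 0$ as $|\mathbf{x}|\to\infty$; for $\mathcal{S}_{r}(\mathbb{R}^{n})$ this is obvious, and for $\mathcal{K}_{M,r}(\mathbb{R}^{n})$ it follows from property (3) with $s=1$ together with $M(t)\ge At$, since then $M((l+1)\mathbf{x})-M(l\mathbf{x})\ge M(\mathbf{x})\ge A|\mathbf{x}|$, i.e.\ $w_{l}/w_{l+1}\le e^{-A|\mathbf{x}|}$.

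The implication ``convergence in $\mathcal{K}_{M,r}(\mathbb{R}^{n})$ (resp.\ $\mathcal{S}_{r}(\mathbb{R}^{n})$) $\Rightarrow$ convergence in $C^{r}(\mathbb{R}^{n})$'' requires nothing about $\mathcal{F}$: since $w_{l}\ge 1$, every $C^{r}$-seminorm $\psi\mapsto\sup_{|\alpha|\le r,\ \mathbf{x}\in K}|\psi^{(\alpha)}(\mathbf{x})|$ is dominated by $\sigma_{l}$, so the inclusion into $C^{r}(\mathbb{R}^{n})$ is continuous and a fortiori preserves limits of filters.

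For the converse, fix a bounded set $B$ belonging to the given basis of $\mathcal{F}$, so $C_{l}:=\sup_{\psi\in B}\sigma_{l}(\psi)<\infty$ for every $l$. First I would note that $\varphi$ automatically belongs to the space: from $\mathcal{F}\to\varphi$ in $C^{r}(\mathbb{R}^{n})$ we get $\psi^{(\alpha)}(\mathbf{x})\to\varphi^{(\alpha)}(\mathbf{x})$ along $\mathcal{F}$ for each fixed $\mathbf{x}$, hence $w_{l}(\mathbf{x})|\varphi^{(\alpha)}(\mathbf{x})|\le C_{l}$ for all $\mathbf{x}$ and $|\alpha|\le r$, i.e.\ $\sigma_{l}(\varphi)\le C_{l}<\infty$. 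Replacing $\mathcal{F}$ by the filter with basis $\{F-\varphi:F\in\mathcal{F}\}$ (still a filter with bounded basis, the constants becoming $\le 2C_{l}$), I may assume $\varphi=0$, and must then show $\sigma_{l}(\psi)\to 0$ along $\mathcal{F}$ for each $l$. Fix $l$ and $\varepsilon>0$, and choose $R>0$ with $\big(w_{l}(\mathbf{x})/w_{l+1}(\mathbf{x})\big)C_{l+1}<\varepsilon$ whenever $|\mathbf{x}|\ge R$; then for every $\psi\in B$,
\[
w_{l}(\mathbf{x})\,|\psi^{(\alpha)}(\mathbf{x})|=\frac{w_{l}(\mathbf{x})}{w_{l+1}(\mathbf{x})}\,w_{l+1}(\mathbf{x})\,|\psi^{(\alpha)}(\mathbf{x})|\le\frac{w_{l}(\mathbf{x})}{w_{l+1}(\mathbf{x})}\,C_{l+1}<\varepsilon,\qquad |\mathbf{x}|\ge R,\ \ |\alpha|\le r.
\]
Setting $D:=\max_{|\mathbf{x}|\le R}w_{l}(\mathbf{x})<\infty$ and using that $\mathcal{F}\to 0$ in $C^{r}(\mathbb{R}^{n})$, the neighbourhood $U=\{g:\sup_{|\alpha|\le r,\ |\mathbf{x}|\le R}|g^{(\alpha)}(\mathbf{x})|<\varepsilon/D\}$ of $0$ contains some $F_{0}\in\mathcal{F}$, so $B_{1}:=B\cap F_{0}\in\mathcal{F}$ and every $\psi\in B_{1}$ satisfies $w_{l}(\mathbf{x})|\psi^{(\alpha)}(\mathbf{x})|\le D\cdot(\varepsilon/D)=\varepsilon$ for $|\mathbf{x}|\le R$ and $|\alpha|\le r$. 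Together with the previous display this gives $\sigma_{l}(\psi)\le\varepsilon$ for all $\psi\in B_{1}$; hence $\sigma_{l}(\psi)\to 0$ along $\mathcal{F}$, and since $l$ was arbitrary, $\mathcal{F}\to 0$.

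I do not anticipate a real obstacle. The only delicate points are the filter bookkeeping — intersecting the bounded member $B$ with a $C^{r}$-small member $F_{0}$ so as to stay inside $\mathcal{F}$, rather than arguing with sequences — and, in the $\mathcal{K}_{M,r}(\mathbb{R}^{n})$ case, ensuring that passing from the weight $w_{l}$ to $w_{l+1}$ yields a genuine (in fact exponential) gain at infinity, which is precisely the role played by property (3) and the lower bound $M(t)\ge At$.
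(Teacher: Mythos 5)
Your proof is correct, and since the paper leaves this lemma to the reader there is no official argument to compare with; what you do --- fix one bounded member $B\in\mathcal{F}$, use the gain $w_{l}(\mathbf{x})/w_{l+1}(\mathbf{x})\to 0$ as $|\mathbf{x}|\to\infty$ (from property (3) and $M(t)\geq At$, resp.\ trivially) to make the weighted norm small outside a ball uniformly on $B$, and then intersect $B$ with a $C^{r}$-small member of $\mathcal{F}$ to handle the remaining compact set --- is precisely the standard argument the statement is meant to encode. The only clause worth adding is that, since $\mathcal{K}_{M,r,l}(\mathbb{R}^{n})$ (resp.\ $\mathcal{S}_{r,l}(\mathbb{R}^{n})$) is a completion of $\mathcal{D}(\mathbb{R}^{n})$, membership of $\varphi$ requires $w_{l}\varphi^{(\alpha)}\to 0$ at infinity and not merely $\sigma_{l}(\varphi)<\infty$; this follows immediately from $\sigma_{l+1}(\varphi)<\infty$ together with the same weight-ratio decay you already established.
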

%%%%%%%%%%%%%%%%%%%%%%%%%%%%%%%%%%%%%%%%%%%%%%%%%%%%%%%%%%%%%%%%%%%%%%%%%%%%%%%%%%%%%%%%%%%%%%%%%%%%%%%%%%%%
\section{Multiresolution analysis in distribution spaces}\label{MRA}
We now explain how one can study multiresolution expansions of tempered distributions and distributions of $M$-exponential growth. We show below that, under certain regularity assumptions on an MRA, multiresolution expansions converge in $\mathcal {K}_{M,r}'(\mathbb {R}^{n})$ or $\mathcal {S}_{r}'(\mathbb {R}^{n})$. Observe that (\ref{equnion}) (resp. (\ref{equnion2})) allows us to analyze also elements of $\mathcal {K}_{M}'(\mathbb {R}^{n})$ (resp. $\mathcal{S}'(\mathbb{R}^{n})$) by reduction to one of the spaces $\mathcal {K}_{M,r}'(\mathbb {R}^{n})$ (resp. $\mathcal {S}_{r}'(\mathbb {R}^{n})$). We mention the references \cite{rakic2009,teofanov1,Sohn1,teofanov3}, where related results have been discussed. The difference here is that we give emphasis to uniform convergence over bounded subsets of test functions and other parameters, which will be crucial for our arguments in the subsequent sections.

 Recall \cite{9, meyer1992, Walter1} that a multiresolution analysis (MRA) is an increasing sequence of closed linear subspaces $\{V_{j}\}_{j \in \mathbb Z}$ of $L^{2}(\mathbb {R}^{n})$ satisfying the following four conditions:

\smallskip

\begin{enumerate}

\item[$(i)$] $f \in V_{j} \Leftrightarrow f(2\:\cdot\:) \in V_{j+1}$,
\item [$(ii)$] $f\in V_{0} \Leftrightarrow f(\:\cdot\:-\mathbf{m}) \in V_{0}$, $\mathbf{m}\in\mathbb{Z}^{n}$,
\item[$(iii)$] $\bigcap_{j}V_{j}=\{0\},$
 $\overline{\bigcup_{j}V_{j}}=L^{2}(\mathbb {R}^{n}),$

\item[$(iv)$] there is $\phi \in L^{2}(\mathbb{R}^{n})$ such that $\{\phi(\:\cdot\:-\mathbf{m})\}_{\mathbf{m} \in
\mathbb {Z}^{n}}$ is an orthonormal basis of $V_{0}$.
\end{enumerate}

\smallskip

\noindent The function $\phi$ from $(iv)$ is called a \emph{scaling function} of the given MRA.

In order to be able to analyze various classes of distributions with the MRA, we shall impose some regularity assumptions on the scaling function $\phi$. One says that the MRA is $r$-\emph{regular} \cite{meyer1992,Walter1}, $r \in \mathbb N$, if the scaling function from $(iv)$ can be chosen in such a way that:

\smallskip

\
$(v)$  $\phi \in {\mathcal S}_{r} (\mathbb{R}^{n})$.
\

\smallskip

\noindent The $r$-regular MRA are well-suited for the analysis of tempered distributions \cite{teofanov1,teofanov3,Walter1}. For distributions of $M$-growth, we need to impose stronger regularity conditions on the scaling function. We say that the MRA is
$(M,r)$-\emph{regular} \cite{Sohn2, Sohn1} if the scaling function from $(iv)$ can be chosen such that
$\phi$ fulfills the requirement:

\smallskip

\
$(v)'$  $\phi \in {\mathcal K}_{M,r} (\mathbb{R}^{n})$.
\

\smallskip

Throughout the rest of the paper, whenever we speak about an $r$-regular MRA (resp. $(M,r)$-regular MRA) we fix the scaling function $\phi$ satisfying $(v)$ (resp. $(v)'$). See \cite{meyer1992} for examples of $r$-regular MRA. We remark that it is possible to find MRA with scaling functions $\phi\in\mathcal{S}(\mathbb{R}^{n})$ \cite{meyer1992}, therefore satisfying $(v)$ for all $r$.  In contrast, it is worth mentioning that the condition $(v)'$ cannot be replaced by $\phi\in \mathcal{K}_{M}(\mathbb{R}^{n})$; in fact \cite{dau}, there cannot be an exponentially decreasing scaling function $\phi \in C^{\infty}(\mathbb{R}^{n})$ with all bounded derivatives. On the other hand, Daubechies \cite{dau} has shown that given an arbitrary $r$, there exists always an $(M,r)$-regular MRA of $L^{2}(\mathbb{R})$ where the scaling function can even be taken to be compactly supported. By tensorizing, this leads to the existence of $(M,r)$-regular MRA of $L^{2}(\mathbb{R}^{n})$ with compactly supported scaling functions.

The reproducing kernel of the Hilbert space
$V_{0}$ is given by

\begin{equation}
\label{reproducing kernel}q_{0}(\mathbf{x},\mathbf{y})=\sum_{\mathbf{m} \in \mathbb Z^n}\phi(\mathbf{x}-\mathbf{m})\overline{\phi(\mathbf{y}-\mathbf{m})}.
\end{equation}
If the MRA is $(M,r)$-regular (resp. $r$-regular), the series (\ref{reproducing kernel}) and its partial derivatives with respect to $\mathbf{x}$
and $\mathbf{y}$ of order less or equal to $r$ are convergent because of the regularity of $\phi$. Furthermore, for fixed $\mathbf{x}$, $q_{0}(\mathbf{x},\:\cdot\:)\in{\mathcal
K}_{M,r}(\mathbb {R}^{n})$ (resp. $q_{0}(\mathbf{x},\:\cdot\:)\in{\mathcal
S}_{r}(\mathbb {R}^{n})$). Using the assumptions (1) (cf. (3)) and (2) on $M$, one verifies \cite{Sohn1,Sohn2} that for every $l \in \mathbb N$ and $|\alpha|,|\beta| \leq r$,
there exists $C_l>0$ such that
\begin{equation}
\label{estimate kernel}
\Big|\partial_ {\mathbf{x}}^{\alpha}\partial_\mathbf{y}^{\beta}q_{0}(\mathbf{x},\mathbf{y})\Big|\leq C_{l}e^{-M(l(\mathbf{x}-\mathbf{y}))}
\end{equation}
$$
(\mbox{resp.}  \ \Big|\partial_ {\mathbf{x}}^{\alpha}\partial_\mathbf{y}^{\beta}q_{0}(\mathbf{x},\mathbf{y})\Big|\leq C_{l}(1+|\mathbf{x}-\mathbf{y}|)^{-l}).
$$
One can also show \cite{meyer1992} that
\begin{equation}\label{integral polynomial}
\int_{\mathbb{R}^{n}} q_{0}(\mathbf{x},\mathbf{y})P(\mathbf{y})d\mathbf{y}=P(\mathbf{x}), \  \ \mbox{ for each polynomial $P$ of degree }\leq r.
\end{equation}
Note that the reproducing kernel of the projection
operator onto $V_{j}$ is
$$q_{j}(\mathbf{x},\mathbf{y})=2^{nj}q_{0}(2^{j}\mathbf{x}, 2^{j}\mathbf{y}), \ \ \ \mathbf{x},\mathbf{y}\in\mathbb{R}^{n},
$$
so that the projection of $f \in L^{2}(\mathbb {R}^{n})$ onto
$V_{j}$ is explicitly given by
\begin{equation}\label{kernel} (q_{j}f)(\mathbf{x}):=\langle f(\mathbf{y}), q_{j}(\mathbf{x},\mathbf{y})\rangle=\int_{\mathbb R^{n}} f(\mathbf{y})q_{j}(\mathbf{x},\mathbf{y})d\mathbf{y}, \quad \mathbf{x} \in \mathbb R^{n}.\end{equation}
 The sequence $\{q_{j}f\}_{j \in \mathbb Z}$ given in \eqref{kernel} is called the multiresolution expansion of $f \in L^{2}(\mathbb
 R^{n})$. Since for an $(M,r)$-regular (resp. $r$-regular) MRA $q_{j}(\mathbf{x},\:\cdot\:)\in{\mathcal
K}_{M,r}(\mathbb R^{n})$ (resp. $q_{j}(\mathbf{x},\:\cdot\:)\in{\mathcal
S}_{r}(\mathbb R^{n})$), the formula (\ref{kernel}) also makes sense for $f\in\mathcal{K}'_{M,r}(\mathbb{R}^{n})$ (resp. $f\in\mathcal{S}'_{r}(\mathbb{R}^{n})$) and it is not hard to verify that $(q_{j}f)(\mathbf{x})$ turns out to be a continuous function in $\mathbf{x}$. It is convenient for our future purposes to extend the definition of the operators (\ref{kernel}) by allowing $j$ to be a continuous variable and also by allowing a translation term.

 \begin{definition} Let $\{V_{j}\}_{j\in \mathbb{Z}}$ be an $(M,r)$-regular (resp. $r$-regular) MRA. Given $\mathbf{z}\in\mathbb{R}^{n}$ and $\lambda\in \mathbb{R}$, the operator $q_{\lambda,\mathbf{z}}$ is defined on elements $f\in\mathcal{K}'_{M,r}(\mathbb{R}^{n})$ (resp. $f\in\mathcal{S}'_{r}(\mathbb{R}^{n})$) as
$$(q_{\lambda,\mathbf{z}} f)(\mathbf{x}):=\langle f(\mathbf{y}),q_{\lambda,\mathbf{z}}(\mathbf{x}, \mathbf{y})\rangle_{\mathbf{y}}, \  \  \ \mathbf{x}\in\mathbb{R}^{n},$$
by means of the kernel  $q_{\lambda,\mathbf{z}}(\mathbf{x},\mathbf{y})=2^{n\lambda} q_{0}(2^{\lambda}\mathbf{x}+\mathbf{z},2^{\lambda}\mathbf{y}+\mathbf{z})$, $\mathbf{x},\mathbf{y}\in\mathbb{R}^{n}$. The net $\{q_{\lambda,\mathbf{z}}f\}_{\lambda\in\mathbb{R}}$ is called the generalized \emph{multiresolution expansion} of $f$.\end{definition}

Clearly, when restricted to $L^{2}(\mathbb{R}^{n})$, $q_{\lambda,\mathbf{z}}$ is the orthogonal projection onto the Hilbert space $V_{\lambda,\mathbf{z}}=\{f(2^{\lambda}\cdot\:+\mathbf{z}):\: f\in V_{0}\}\subset L^{2}(\mathbb{R}^{n})$. When $\mathbf{z}=0$, we simply write $q_{\lambda}:=q_{\lambda,0}$. The consideration of the parameter $\mathbf{z}$ will play an important role in Section \ref{QB}. Note also that $\langle q_{\lambda,\mathbf{z}}f,\varphi\rangle=\langle f,q_{\lambda,\mathbf{z}}\varphi\rangle$, for any  $f\in\mathcal{K}'_{M,r}(\mathbb{R}^{n})$ and $\varphi\in\mathcal{K}_{M,r}(\mathbb{R}^{n})$ (resp. $f\in\mathcal{S}'_{r}(\mathbb{R}^{n})$ and $\varphi\in\mathcal{S}_{r}(\mathbb{R}^{n})$).

We now study the convergence of the generalized multiresolution expansions of distributions. We need a preparatory result. In dimension $n=1$, Pilipovi\'{c} and Teofanov \cite{teofanov1} have shown that if $f\in C^{r}(\mathbb{R})$ and all of its derivatives up to order $r$ are of at most polynomial growth, then its multiresolution expansion $q_{j}f$ with respect to an $r$-regular MRA converges to $f$ uniformly over compact intervals. Sohn has considered in \cite{Sohn1} the analogous result for functions of growth $O(e^{M(kx)})$, but his arguments contain various inaccuracies (compare, e.g., his formulas (17) and (21) with our (\ref{derivative3}) below). We extend those results here to the multidimensional case and for the generalized multiresolution projections $q_{\lambda,\mathbf{z}}$ with uniformity in the parameter $\mathbf{z}$.

Let $\psi\in\mathcal{D}(\mathbb{R}^{n})$ such that $\int_{\mathbb{R}^{n}}\psi(\mathbf{x})d\mathbf{x}=1$. In the case of an $r$-regular MRA, it is shown in \cite[p. 39]{meyer1992} that given any multi-index $|\alpha|\leq r$, there are functions $R^{\alpha,\beta}\in L^{\infty}(\mathbb{R}^{n}\times \mathbb{R}^{n})$ such that
\begin{equation}
\label{derivative1}
|R^{\alpha,\beta}(\mathbf{x},\mathbf{y})|\leq \tilde{C}_{l}(1+|\mathbf{x}-\mathbf{y}|)^{-l}, \  \ \ \forall l\in\mathbb{N},
\end{equation}
\begin{equation}
\label{derivative2}
\int_{\mathbb{R}^{n}}R^{\alpha,\beta}(\mathbf{x},\mathbf{y})d\mathbf{y}=0, \  \  \forall \mathbf{x}\in\mathbb{R}^{n},
\end{equation}
and for any $f\in C^{r}(\mathbb{R}^n)$, with partial derivatives of at most polynomial growth,
$$
\partial^{\alpha}(q_{0}f)=\psi\ast (\partial^{\alpha} f)+ \sum_{|\beta|=|\alpha|}R^{\alpha,\beta}(\partial^{\alpha} f).
$$
 Denoting as $R_{\lambda,\mathbf{z}}^{\alpha,\beta}$ the integral operator with kernel 
$$
R_{\lambda,\mathbf{z}}^{\alpha,\beta}(\mathbf{x},\mathbf{y})= 2^{n\lambda}R^{\alpha,\beta}(2^{\lambda}\mathbf{x}+\mathbf{z},2^{\lambda}\mathbf{y}+\mathbf{z}),
$$ 
we obtain the formulas
\begin{equation}
\label{derivative3}
\partial^{\alpha}_{\mathbf{x}}q_{\lambda,\mathbf{z}}f(\mathbf{x})= 2^{n\lambda} \int_{\mathbb{R}^{n}}\psi(2^{\lambda}(\mathbf{x}-\mathbf{y}))\partial^{\alpha}_{\mathbf{y}}f(\mathbf{y})d\mathbf{y}+\sum_{|\beta|=|\alpha|}\int_{\mathbb{R}^{n}} R^{\alpha,\beta}_{\lambda,\mathbf{z}}(\mathbf{x},\mathbf{y})\partial^{\alpha}_{\mathbf{y}}f(\mathbf{y})d\mathbf{y}.
\end{equation}
Likewise for an $(M,r)$-regular MRA, one can modify the arguments from \cite{meyer1992} in such a way that one chooses the $R^{\alpha,\beta}$ with decay
\begin{equation}
\label{derivative4}
|R^{\alpha,\beta}(\mathbf{x},\mathbf{y})|\leq \tilde{C}_{l}e^{-M(l(\mathbf{x}-\mathbf{y}))}, \  \ \ \forall l\in\mathbb{N}.
\end{equation}

\begin{proposition}
\label{prop1} Assume that the MRA is $(M,r)$-regular (resp. $r$-regular).
\begin{itemize}
\item [(a)]If $f\in C^{r}(\mathbb{R}^{n})$ and there is $k\in\mathbb{N}$ such that $f^{(\alpha)}(\mathbf{x})=O(e^{M(k\mathbf{x})})$ (resp. $f^{(\alpha)}(\mathbf{x})=O((1+|\mathbf{x}|)^{k})$) for each $|\alpha|\leq r$, then $\lim_{\lambda\to\infty} q_{\lambda,\mathbf{z}}f=f$ in $C^{r}(\mathbb{R}^{n})$.
\item [(b)] Suppose that the subset $\mathfrak{B}\subset C^{r}(\mathbb{R}^{n})$ is such that for each $|\alpha|\leq r$ one has $f^{(\alpha)}(\mathbf{x})=O(e^{M(k\mathbf{x})})$ (resp. $f^{(\alpha)}(\mathbf{x})=O((1+|\mathbf{x}|)^{k})$) uniformly with respect to $f\in \mathfrak{B}$, then  $\lim_{\lambda\to\infty}q_{\lambda,\mathbf{z}}f=f$ in $C^{r-1}(\mathbb{R}^{n})$ uniformly for $f\in \mathfrak{B}$.
\end{itemize}
All the limits hold uniformly with respect to the parameter $\mathbf{z}\in\mathbb{R}^{n}$.
\end{proposition}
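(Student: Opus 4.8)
\emph{Proof proposal.} The plan is to estimate, for each multi-index $|\alpha|\leq r$ and each compact set $K\subset\mathbb{R}^{n}$, the quantity $\sup_{\mathbf{x}\in K}\big|\partial^{\alpha}_{\mathbf{x}}q_{\lambda,\mathbf{z}}f(\mathbf{x})-\partial^{\alpha}f(\mathbf{x})\big|$ directly from the representation formula (\ref{derivative3}), and to show it tends to $0$ as $\lambda\to\infty$ with a bound independent of $\mathbf{z}$ (and, for (b), of $f\in\mathfrak{B}$). The first summand in (\ref{derivative3}) is $(\psi_{\lambda}\ast\partial^{\alpha}f)(\mathbf{x})$ with $\psi_{\lambda}(\mathbf{x})=2^{n\lambda}\psi(2^{\lambda}\mathbf{x})$, which carries no $\mathbf{z}$-dependence. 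Since $\psi\in\mathcal{D}(\mathbb{R}^{n})$ has unit integral and fixed compact support, $\{\psi_{\lambda}\}$ is an approximate identity, and the standard bound $|(\psi_{\lambda}\ast g)(\mathbf{x})-g(\mathbf{x})|\leq\int|\psi_{\lambda}(\mathbf{u})|\,|g(\mathbf{x}-\mathbf{u})-g(\mathbf{x})|\,d\mathbf{u}$ with $g=\partial^{\alpha}f$ gives uniform convergence on $K$ at a rate governed by the modulus of continuity of $\partial^{\alpha}f$ on a compact neighborhood of $K$.

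For the error terms, I would substitute $\mathbf{u}=2^{\lambda}\mathbf{y}+\mathbf{z}$, $\mathbf{v}=2^{\lambda}\mathbf{x}+\mathbf{z}$, so that $\mathbf{y}=\mathbf{x}+2^{-\lambda}(\mathbf{u}-\mathbf{v})$ and the $\mathbf{z}$-dependence disappears; using the vanishing-moment identity (\ref{derivative2}) one gets, with $\mathbf{w}=\mathbf{u}-\mathbf{v}$,
\[
\int_{\mathbb{R}^{n}}R^{\alpha,\beta}_{\lambda,\mathbf{z}}(\mathbf{x},\mathbf{y})\partial^{\alpha}f(\mathbf{y})\,d\mathbf{y}=\int_{\mathbb{R}^{n}}R^{\alpha,\beta}(\mathbf{v},\mathbf{v}+\mathbf{w})\big[\partial^{\alpha}f(\mathbf{x}+2^{-\lambda}\mathbf{w})-\partial^{\alpha}f(\mathbf{x})\big]\,d\mathbf{w}.
\]
Then I would split the $\mathbf{w}$-integral at $|\mathbf{w}|=2^{\lambda/2}$ and use the decay bounds (\ref{derivative4}) (resp. (\ref{derivative1})). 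On $|\mathbf{w}|\leq 2^{\lambda/2}$ the bracket is at most the modulus of continuity of $\partial^{\alpha}f$ near $K$ evaluated at $2^{-\lambda/2}$, times $\tilde{C}_{l}\int e^{-M(l\mathbf{w})}d\mathbf{w}$ (resp. $\tilde{C}_{l}\int(1+|\mathbf{w}|)^{-l}d\mathbf{w}$), which vanishes. On $|\mathbf{w}|>2^{\lambda/2}$ I would bound the bracket by the sum of the two function values; the $\partial^{\alpha}f(\mathbf{x})$ part is $\sup_{K}|\partial^{\alpha}f|\cdot\tilde{C}_{l}\int_{|\mathbf{w}|>2^{\lambda/2}}e^{-M(l\mathbf{w})}d\mathbf{w}\to0$, while for $|\partial^{\alpha}f(\mathbf{x}+2^{-\lambda}\mathbf{w})|=O(e^{M(k(\mathbf{x}+2^{-\lambda}\mathbf{w}))})$ I would use $M(k(\mathbf{x}+2^{-\lambda}\mathbf{w}))\leq M(2k\mathbf{x})+M(2k\mathbf{w})$ (property (2) for vector arguments, together with $2^{-\lambda}\leq1$ and monotonicity) and then $e^{-M(l\mathbf{w})}e^{M(2k\mathbf{w})}\leq e^{-M((l-2k)\mathbf{w})}\leq e^{-A(l-2k)|\mathbf{w}|}$ (property (3) and $M(t)\geq At$) for $l>2k$, making the tail integral go to $0$ uniformly for $\mathbf{x}\in K$. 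In the $r$-regular case the same splitting works with $(1+|\mathbf{x}+2^{-\lambda}\mathbf{w}|)^{k}\leq C_{K}(1+|\mathbf{w}|)^{k}$ and any $l>k+n$. Assembling the pieces proves (a), and every estimate above is manifestly uniform in $\mathbf{z}$.

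Part (b) I would obtain by running the identical computation only for $|\alpha|\leq r-1$. The one ingredient that used single-function uniform continuity — the convergence rates in the approximate-identity step and in the $|\mathbf{w}|\leq 2^{\lambda/2}$ piece — gets replaced by an equicontinuity statement: for $|\alpha|\leq r-1$ the family $\{\partial^{\alpha}f:f\in\mathfrak{B}\}$ is equi-Lipschitz on any compact set, since $\nabla\partial^{\alpha}f=(\partial^{\alpha+e_{i}}f)_{i}$ is uniformly bounded there by the order-$r$ growth hypothesis on $\mathfrak{B}$. All the tail estimates are uniform in $f\in\mathfrak{B}$ because the growth constants are. This yields $q_{\lambda,\mathbf{z}}f\to f$ in $C^{r-1}(\mathbb{R}^{n})$, uniformly for $f\in\mathfrak{B}$ and uniformly in $\mathbf{z}$; the loss of one order is precisely the absence of any assumed equicontinuity for the top-order derivatives.

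I expect the main obstacle to be the tail estimate in the $(M,r)$-regular case — balancing the kernel decay $e^{-M(l\mathbf{w})}$ against the super-exponential growth $e^{M(k(\mathbf{x}+2^{-\lambda}\mathbf{w}))}$ of $f$ evaluated at the contracted point. This is the step where the structural properties (1)–(3) of $M$ and the inequality $2^{-\lambda}\leq1$ are indispensable, and where one must be careful that the resulting bounds remain uniform over $\mathbf{z}$, over $\mathbf{x}\in K$, and (in (b)) over $f\in\mathfrak{B}$; the polynomial case is a straightforward parallel.
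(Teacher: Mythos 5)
Your proof is correct and follows essentially the same route as the paper: both start from the decomposition (\ref{derivative3}), use the vanishing-moment identity (\ref{derivative2}) together with the kernel decay (\ref{derivative4}) (resp. (\ref{derivative1})) to reduce to controlling $\partial^{\alpha}f(\mathbf{x}+2^{-\lambda}\mathbf{w})-\partial^{\alpha}f(\mathbf{x})$ via local modulus of continuity plus the growth hypothesis, and obtain (b) from the mean value theorem giving uniformity over $\mathfrak{B}$ for derivatives of order $\leq r-1$. The only cosmetic difference is that you split the $\mathbf{w}$-integral at $2^{\lambda/2}$, whereas the paper packages the same information into a single majorant $E_{\alpha}$ and applies dominated convergence.
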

\begin{proof}
We only show the statement for an $(M,r)$-regular MRA, the case of an $r$-regular MRA is analogous. We first give the proof of part $(a)$. In view of the decomposition (\ref{derivative3}) and the condition (\ref{derivative2}), it suffices to show that for each $|\alpha|=|\beta|\leq r$ one has
\begin{equation}
\label{eq1} \lim_{\lambda\to\infty}2^{n\lambda}\int_{\mathbb{R}^{n}} R^{\alpha,\beta}(2^{\lambda}\mathbf{x}+\mathbf{z}, 2^{\lambda}\mathbf{y}+\mathbf{z})[f^{(\alpha)}(\mathbf{y})-f^{(\alpha)}(\mathbf{x})]d\mathbf{y}=0.
\end{equation}
Note that if $\mathbf{x}$ remains in a compact subset of $\mathbb{R}^{n}$, there is a non-decreasing function $E_{\alpha}$ such that $|f^{(\alpha)}(\mathbf{y})-f^{(\alpha)}(\mathbf{x})|\leq E_{\alpha}(|\mathbf{x}-\mathbf{y}|)$, where $E_{\alpha}(t)\to 0$ as $t\to 0^{+}$ and $E_{\alpha}(t)=O(e^{M(2kt)})$. Since
\begin{align*}
&\lim_{\lambda\to\infty}2^{n\lambda}\int_{\mathbb{R}^{n}} |R^{\alpha,\beta}(2^{\lambda}\mathbf{x}+\mathbf{z}, 2^{\lambda}\mathbf{y}+\mathbf{z})|E_{\alpha}(|\mathbf{x}-\mathbf{y}|)d\mathbf{y}
\\
&
\leq \lim_{\lambda\to\infty}C_{2k+1}\int_{\mathbb{R}^{n}}e^{-M((2k+1)(\mathbf{x}-\mathbf{y}))}E_{\alpha}(2^{-\lambda}|\mathbf{x}-\mathbf{y}|)d\mathbf{y}=0,
\end{align*}
we obtain (\ref{eq1}). For part $(b)$, it is enough to observe that, as the the mean value theorem shows, the functions $E_{\alpha}$ from above can be taken to be the same for all $f\in \mathfrak{B}$ and $|\alpha|\leq r-1$.
\end{proof}

We then have,

\begin{theorem}\label{t1}Suppose that the MRA is $(M,r)$-regular (resp. $r$-regular). Let $\varphi\in\mathcal{K}_{M,r}(\mathbb{R}^{n})$ and $f\in\mathcal{K}'_{M,r}(\mathbb{R}^{n})$ (resp. $\varphi\in \mathcal{S}_{r}(\mathbb{R}^{n})$ and $f\in\mathcal{S}'_{r}(\mathbb{R}^{n})$). Then,
\begin{equation}
\label{eq3.9}
\lim_{\lambda\to \infty} q_{\lambda,\mathbf{z}}\varphi=\varphi \ \ \ \mbox{in } \mathcal{K}_{M,r}(\mathbb{R}^{n}) \ (\mbox{resp. in } \mathcal{S}_{r}(\mathbb{R}^{n}))
\end{equation}
and
\begin{equation}
\label{eq3.10}
\lim_{\lambda\to \infty} q_{\lambda,\mathbf{z}}f=f \ \ \ \mbox{weakly$^{\ast}$ in } \mathcal{K}'_{M,r}(\mathbb{R}^{n}) \ (\mbox{resp. in } \mathcal{S}'_{r}(\mathbb{R}^{n})).
\end{equation}
Furthermore, if $f\in\mathcal{K}'_{M,r-1}(\mathbb{R}^{n})$ (resp. $f\in\mathcal{S}'_{r-1}(\mathbb{R}^{n})$), then the limit (\ref{eq3.10}) holds strongly in $\mathcal{K}'_{M,r}(\mathbb{R}^{n})$ (resp. in $\mathcal{S}'_{r}(\mathbb{R}^{n})$). All the limits hold uniformly in the parameter $\mathbf{z}\in\mathbb{R}^{n}$.
\end{theorem}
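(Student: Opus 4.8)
The plan is to reduce all four assertions to Proposition~\ref{prop1} and Lemma~\ref{lemma 2.1}, the decisive preliminary input being a uniform boundedness estimate for the net $\{q_{\lambda,\mathbf{z}}\varphi\}$; we argue only in the $(M,r)$-regular case. The first step is to prove that for every $l\in\mathbb{N}$ there is $C_{l}>0$ such that $\nu_{r,l}(q_{\lambda,\mathbf{z}}\varphi)\leq C_{l}\,\nu_{r,2l}(\varphi)$ for all $\lambda\geq 0$, all $\mathbf{z}\in\mathbb{R}^{n}$ and all $\varphi\in\mathcal{K}_{M,r}(\mathbb{R}^{n})$. For $|\alpha|\leq r$ one differentiates using (\ref{derivative3}), so that $e^{M(l\mathbf{x})}|\partial^{\alpha}_{\mathbf{x}}q_{\lambda,\mathbf{z}}\varphi(\mathbf{x})|$ splits into a $\psi$-term and an $R$-term. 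In the $\psi$-term the compact support of $\psi$ restricts the integration to $|\mathbf{x}-\mathbf{y}|\leq 2^{-\lambda}A\leq A$, so property (2) gives $M(l\mathbf{x})\leq M(2lA)+M(2l\mathbf{y})$ and the term is $\leq e^{M(2lA)}\nu_{r,2l}(\varphi)\int_{\mathbb{R}^{n}}|\psi(\mathbf{u})|\,d\mathbf{u}$. In the $R$-term one uses $|R^{\alpha,\beta}_{\lambda,\mathbf{z}}(\mathbf{x},\mathbf{y})|\leq 2^{n\lambda}\tilde C_{L}e^{-M(L2^{\lambda}(\mathbf{x}-\mathbf{y}))}$ from (\ref{derivative4}) together with $e^{M(l\mathbf{x})}|\varphi^{(\alpha)}(\mathbf{y})|\leq e^{M(2l(\mathbf{x}-\mathbf{y}))}\nu_{r,2l}(\varphi)$ (again property (2)); choosing $L=L(l)$ so large that property (3) yields $M(L2^{\lambda}\mathbf{u})\geq M(2l\mathbf{u})+M(2^{\lambda}\mathbf{u})$ for every $\lambda\geq 0$, the integrand is dominated by $\tilde C_{L}\nu_{r,2l}(\varphi)\,2^{n\lambda}e^{-M(2^{\lambda}(\mathbf{x}-\mathbf{y}))}$, and the substitution $\mathbf{u}=2^{\lambda}(\mathbf{x}-\mathbf{y})$ (Jacobian $2^{n\lambda}$) leaves $\tilde C_{L}\nu_{r,2l}(\varphi)\int_{\mathbb{R}^{n}}e^{-M(\mathbf{u})}\,d\mathbf{u}$, a finite quantity since $M(|\mathbf{u}|)\geq A|\mathbf{u}|$. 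Hence $\{q_{\lambda,\mathbf{z}}\varphi:\lambda\geq 0,\ \mathbf{z}\in\mathbb{R}^{n}\}$ is a bounded subset of $\mathcal{K}_{M,r}(\mathbb{R}^{n})$.

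Next I would invoke Proposition~\ref{prop1}(a) with $f=\varphi$ (the derivatives of $\varphi\in\mathcal{K}_{M,r}$ are bounded, so the growth hypothesis is satisfied): it gives $q_{\lambda,\mathbf{z}}\varphi\to\varphi$ in $C^{r}(\mathbb{R}^{n})$ as $\lambda\to\infty$, uniformly in $\mathbf{z}$. Combining this with the boundedness just established, the filter on $\mathcal{K}_{M,r}(\mathbb{R}^{n})$ generated by the sets $\{q_{\lambda,\mathbf{z}}\varphi:\lambda\geq\lambda_{0},\ \mathbf{z}\in\mathbb{R}^{n}\}$, $\lambda_{0}\geq 0$, has a bounded basis and converges to $\varphi$ in $C^{r}(\mathbb{R}^{n})$, so Lemma~\ref{lemma 2.1} upgrades the convergence to $\mathcal{K}_{M,r}(\mathbb{R}^{n})$; unravelled, this says $\sup_{\mathbf{z}}\nu_{r,l}(q_{\lambda,\mathbf{z}}\varphi-\varphi)\to 0$ for each $l$, which is (\ref{eq3.9}) with the asserted uniformity. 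Then (\ref{eq3.10}) follows at once from the duality identity $\langle q_{\lambda,\mathbf{z}}f,\varphi\rangle=\langle f,q_{\lambda,\mathbf{z}}\varphi\rangle$ and the continuity of $f$ on $\mathcal{K}_{M,r}(\mathbb{R}^{n})$, which bounds $|\langle q_{\lambda,\mathbf{z}}f-f,\varphi\rangle|$ by a multiple of $\nu_{r,l}(q_{\lambda,\mathbf{z}}\varphi-\varphi)$ for a suitable $l$, hence $\to 0$ uniformly in $\mathbf{z}$.

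For the last assertion, fix a bounded set $\mathfrak{B}\subset\mathcal{K}_{M,r}(\mathbb{R}^{n})$; strong convergence in $\mathcal{K}'_{M,r}(\mathbb{R}^{n})$ amounts to $\sup_{\varphi\in\mathfrak{B}}|\langle q_{\lambda,\mathbf{z}}f-f,\varphi\rangle|=\sup_{\varphi\in\mathfrak{B}}|\langle f,q_{\lambda,\mathbf{z}}\varphi-\varphi\rangle|\to 0$, uniformly in $\mathbf{z}$. Since $f\in\mathcal{K}'_{M,r-1}(\mathbb{R}^{n})$, $f$ is continuous on $\mathcal{K}_{M,r-1}(\mathbb{R}^{n})$, so it suffices to show $q_{\lambda,\mathbf{z}}\varphi\to\varphi$ in $\mathcal{K}_{M,r-1}(\mathbb{R}^{n})$ uniformly for $\varphi\in\mathfrak{B}$ and $\mathbf{z}\in\mathbb{R}^{n}$. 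Here the estimate of the first paragraph applies uniformly over $\varphi\in\mathfrak{B}$ (it only involves $\sup_{\varphi\in\mathfrak{B}}\nu_{r,2l}(\varphi)<\infty$), so $\{q_{\lambda,\mathbf{z}}\varphi-\varphi:\lambda\geq0,\ \mathbf{z}\in\mathbb{R}^{n},\ \varphi\in\mathfrak{B}\}$ is bounded in $\mathcal{K}_{M,r-1}(\mathbb{R}^{n})$, while Proposition~\ref{prop1}(b) gives convergence to $0$ in $C^{r-1}(\mathbb{R}^{n})$, uniformly over $\varphi\in\mathfrak{B}$ and $\mathbf{z}$; one more application of Lemma~\ref{lemma 2.1}, to the corresponding filter on $\mathcal{K}_{M,r-1}(\mathbb{R}^{n})$, finishes the proof. (Alternatively, the strong statement could be deduced from the weak$^{\ast}$ one via the compactness of $\mathcal{K}'_{M,r-1}(\mathbb{R}^{n})\hookrightarrow\mathcal{K}'_{M,r}(\mathbb{R}^{n})$, but the route above is more direct and retains the uniformity in $\mathbf{z}$.)

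The crux is the uniform boundedness estimate of the first paragraph: the bound must be controlled \emph{simultaneously} in $\mathbf{x}$, in the dilation parameter $\lambda\geq 0$, and in the translation parameter $\mathbf{z}$, which forces a careful interplay of the subadditivity-type properties (2) and (3) of $M$ with the exact $2^{\lambda}$-scaling of the (superexponentially, resp. rapidly) decaying kernels and the compact support of $\psi$. Everything else is a formal deployment of Proposition~\ref{prop1} and Lemma~\ref{lemma 2.1}; the loss of one derivative in the strong statement is dictated by the $C^{r-1}$-conclusion of Proposition~\ref{prop1}(b).
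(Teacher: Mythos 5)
Your proof is correct and follows essentially the same route as the paper: you establish the uniform boundedness of $\{q_{\lambda,\mathbf{z}}\varphi\}$ over $\lambda$, $\mathbf{z}$ and bounded sets of test functions (the paper's Claim 1, which you prove by the same kernel estimates from (\ref{derivative3})--(\ref{derivative4}) and properties (2)--(3) of $M$, only handling the $\psi$- and $R$-terms separately rather than together), then upgrade the $C^{r}$-convergence of Proposition \ref{prop1}(a) via Lemma \ref{lemma 2.1}, obtain (\ref{eq3.10}) by duality, and get the strong statement from Proposition \ref{prop1}(b) plus Lemma \ref{lemma 2.1}, exactly as in the paper. The only omission is the $r$-regular (tempered) case, which is analogous via (\ref{derivative1}).
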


\begin{proof}
By  Lemma \ref{lemma 2.1} and part $(a)$ from Proposition \ref{prop1}, the limit (\ref{eq3.9}) would follow once we establish the following claim:
\begin{claim}
\label{claim1} Let $\mathfrak{B}\subset \mathcal{K}_{M,r}(\mathbb{R}^{n})$ be a bounded set. Then the set
$$\{q_{\lambda,\mathbf{z}}\varphi:\: \varphi\in \mathfrak{B},\:\lambda\geq 1,\mathbf{z}\in\mathbb{R}^{n}\}$$
is bounded in $\mathcal{K}_{M,r}(\mathbb{R}^{n})$  (resp. in $\mathcal{S}_{r}(\mathbb{R}^{n})$).
\end{claim}

Let us show Claim \ref{claim1} for $\mathcal{K}_{M,r}(\mathbb{R}^{n})$. Using (\ref{derivative3}), (\ref{derivative4}), and the assumptions (2) and (1) (in fact (3)) on $M$, we have
\begin{align*}
v_{r,l}(q_{\lambda,\mathbf{z}}\varphi)&\leq A_{l}v_{r,2l}(\varphi)\sup_{\: \mathbf{x}\in\mathbb{R}^{n}}2^{n\lambda}\int_{\mathbb{R}^{n}} e^{-M(2^{\lambda+1}(l+1)(\mathbf{x}-\mathbf{y}))}e^{M(l\mathbf{x})-M(2l\mathbf{y})}d\mathbf{y}
\\
&
\leq A_{l}v_{r,2l}(\varphi)\sup_{\: \mathbf{x}\in\mathbb{R}^{n}}2^{n\lambda }\int_{\mathbb{R}^{n}} e^{-M(2^{\lambda+1}(l+1)(\mathbf{x}-\mathbf{y}))} e^{M(2l(\mathbf{x}-\mathbf{y}))}d\mathbf{y}
\\
&
\leq \frac{A_{l}}{2^{n}}v_{r,2l}(\varphi)\int_{\mathbb{R}^{n}} e^{-M((l+1)\mathbf{y})} e^{M(l\mathbf{y})}d\mathbf{y}
\leq \frac{A_{l}}{2^{n}} v_{r,2l}(\varphi)\int_{\mathbb{R}^{n}} e^{-M(\mathbf{y})}d\mathbf{y}.
\end{align*}
For $\mathcal{S}_{r}(\mathbb{R}^{n})$ we make use of (\ref{derivative1}),
\begin{align*}
\rho_{r,l}(q_{\lambda,\mathbf{z}}\varphi)&\leq \tilde{A}_{l}\rho_{r,l}(\varphi)\sup_{\: \mathbf{x}\in\mathbb{R}^{n}}2^{n\lambda}\int_{\mathbb{R}^{n}} (1+2^{\lambda}|\mathbf{x}-\mathbf{y}|)^{-l-n-1}(1+|\mathbf{x}-\mathbf{y}|)^{l}d\mathbf{y}
\\
&
\leq \frac{A_{l}}{2^{n}} \rho_{r,l}(\varphi)\int_{\mathbb{R}^{n}} \frac{d\mathbf{y}}{(1+|\mathbf{y}|)^{n+1}}\:.
\end{align*}

The limit (\ref{eq3.10}) is an immediate consequence of (\ref{eq3.9}) and the relation $\langle q_{\lambda,\mathbf{z}}f,\varphi\rangle=\langle f,q_{\lambda,\mathbf{z}}\varphi\rangle$. Assume now that  $f\in\mathcal{K}'_{M,r-1}(\mathbb{R}^{n})$ (resp. $f\in\mathcal{S}'_{r-1}(\mathbb{R}^{n})$) and let $\mathfrak{B}$ be a bounded set in $\mathcal{K}_{M,r}(\mathbb{R}^{n})$ (resp. in $\mathcal{S}_{r}(\mathbb{R}^{n})$). From Claim \ref{claim1}, part $(b)$ from Proposition \ref{prop1}, and again Lemma \ref{lemma 2.1}, we get that $\lim_{\lambda\to\infty}q_{\lambda,\mathbf{z}}\varphi=\varphi$ in $\mathcal{K}_{M,r-1}(\mathbb{R}^{n})$ (resp. in $\mathcal{S}_{r-1}(\mathbb{R}^{n})$) uniformly for $\varphi\in \mathfrak{B}$ and $\mathbf{z}\in\mathbb{R}^{n}$. Hence,
$$
\lim_{\lambda\to\infty}\sup_{\varphi\in \mathfrak{B}}\left|\langle q_{\lambda,\mathbf{z}}f-f,\varphi\rangle\right|=\lim_{\lambda\to\infty}\sup_{\varphi\in \mathfrak{B}}\left|\langle f,q_{\lambda,\mathbf{z}}\varphi-\varphi\rangle\right|=0.
$$
\end{proof}

For the spaces $\mathcal{S}(\mathbb{R}^{n})$ and $\mathcal{S}'(\mathbb{R}^{n})$, we have:
\begin{corollary}\label{c1} Suppose that the MRA admits a scaling function $\phi\in\mathcal{S}(\mathbb{R}^{n})$. Then,   $\lim_{\lambda\to \infty} q_{\lambda,\mathbf{z}}\varphi=\varphi$ in $\mathcal{S}(\mathbb{R}^{n})$ and $\lim_{\lambda\to \infty} q_{\lambda,\mathbf{z}}f=f$ in $\mathcal{S}'(\mathbb{R}^{n})$ uniformly in $\mathbf{z}\in\mathbb{R}^{n}$, for every $\varphi\in \mathcal{S}(\mathbb{R}^{n})$ and $f\in\mathcal{S}'(\mathbb{R}^{n})$.
\end{corollary}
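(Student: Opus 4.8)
The plan is to deduce the corollary directly from Theorem \ref{t1}, using that a scaling function $\phi\in\mathcal{S}(\mathbb{R}^{n})$ makes the MRA $r$-regular \emph{simultaneously} for every $r\in\mathbb{N}$, together with the projective and inductive presentations $\mathcal{S}(\mathbb{R}^{n})=\operatorname*{proj}\lim_{r}\mathcal{S}_{r}(\mathbb{R}^{n})$ and $\mathcal{S}'(\mathbb{R}^{n})=\operatorname*{ind}\lim_{r}\mathcal{S}'_{r}(\mathbb{R}^{n})$ recorded in Section \ref{preli}.

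For the test function assertion, I would fix $\varphi\in\mathcal{S}(\mathbb{R}^{n})$, note that $\varphi\in\mathcal{S}_{r}(\mathbb{R}^{n})$ for every $r$, and apply \eqref{eq3.9} of Theorem \ref{t1} at each level $r$ to get $q_{\lambda,\mathbf{z}}\varphi\to\varphi$ in $\mathcal{S}_{r}(\mathbb{R}^{n})$ as $\lambda\to\infty$, uniformly in $\mathbf{z}$. Since every continuous seminorm on $\mathcal{S}(\mathbb{R}^{n})$ is dominated by some $\rho_{r,l}$, this is exactly the statement that $\sup_{\mathbf{z}\in\mathbb{R}^{n}}\rho_{r,l}(q_{\lambda,\mathbf{z}}\varphi-\varphi)\to 0$ for all $r,l$, hence that $q_{\lambda,\mathbf{z}}\varphi\to\varphi$ in $\mathcal{S}(\mathbb{R}^{n})$ uniformly in $\mathbf{z}$.

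For the distributional assertion, given $f\in\mathcal{S}'(\mathbb{R}^{n})$ I would first pick, via \eqref{equnion2}, an index $r_{0}$ with $f\in\mathcal{S}'_{r_{0}}(\mathbb{R}^{n})$; the embeddings $\mathcal{S}'_{r}\hookrightarrow\mathcal{S}'_{r+1}$ then give $f\in\mathcal{S}'_{r}(\mathbb{R}^{n})$ for all $r\ge r_{0}$, and $q_{\lambda,\mathbf{z}}f$ is well defined independently of the chosen level. Next I would recall that a set $\mathfrak{B}\subset\mathcal{S}(\mathbb{R}^{n})$ is bounded if and only if it is bounded in each $\mathcal{S}_{r}(\mathbb{R}^{n})$, in particular in $\mathcal{S}_{r_{0}+1}(\mathbb{R}^{n})$. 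Since $f\in\mathcal{S}'_{r_{0}}(\mathbb{R}^{n})=\mathcal{S}'_{(r_{0}+1)-1}(\mathbb{R}^{n})$, the strong convergence part of Theorem \ref{t1} applied with $r=r_{0}+1$ yields
$$
\lim_{\lambda\to\infty}\ \sup_{\mathbf{z}\in\mathbb{R}^{n}}\ \sup_{\varphi\in\mathfrak{B}}\big|\langle q_{\lambda,\mathbf{z}}f-f,\varphi\rangle\big|=0 .
$$
As the strong topology of $\mathcal{S}'(\mathbb{R}^{n})$ is by definition that of uniform convergence on bounded subsets of $\mathcal{S}(\mathbb{R}^{n})$, this is precisely $q_{\lambda,\mathbf{z}}f\to f$ in $\mathcal{S}'(\mathbb{R}^{n})$ uniformly in $\mathbf{z}$.

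No genuine difficulty arises: the statement is a bookkeeping reduction to Theorem \ref{t1} at every regularity level. The only points that need care are (i) using the \emph{strong} (not merely weak${}^{\ast}$) convergence clause of Theorem \ref{t1} — which costs one unit of regularity and thus forces the passage from $r_{0}$ to $r_{0}+1$ — so that uniformity over bounded sets of test functions is available, and (ii) matching ``uniformly in $\mathbf{z}$'' with the correct seminorm, respectively bounded-set, descriptions of the topologies of $\mathcal{S}(\mathbb{R}^{n})$ and $\mathcal{S}'(\mathbb{R}^{n})$.
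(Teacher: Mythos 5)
Your proposal is correct and follows exactly the route the paper intends: Corollary \ref{c1} is stated without a separate proof precisely because, with $\phi\in\mathcal{S}(\mathbb{R}^{n})$, the MRA is $r$-regular for every $r$, and one applies Theorem \ref{t1} at each level $r$ together with $\mathcal{S}(\mathbb{R}^{n})=\operatorname*{proj}\lim_{r}\mathcal{S}_{r}(\mathbb{R}^{n})$ and the inductive representation (\ref{equnion2}) of $\mathcal{S}'(\mathbb{R}^{n})$. Your two points of care, the passage from $r_{0}$ to $r_{0}+1$ to invoke the strong-convergence clause and the identification of the topologies via seminorms and bounded sets, are exactly the right bookkeeping.
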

\begin{remark}
The proof of Theorem \ref{t1} also applies to show that $\lim_{\lambda\to \infty} q_{\lambda,\mathbf{z}}\varphi=\varphi$ in the Banach space $\mathcal{K}_{M,r,l}(\mathbb{R}^{n})$  (resp. in $\mathcal{S}_{r,l}(\mathbb{R}^{n})$) for each $\varphi\in\mathcal{K}_{M,r,2(l+1)}(\mathbb{R}^{n})$ (resp. $\mathcal{S}_{r,l+1}(\mathbb{R}^{n})$).
\end{remark}
\section{Pointwise convergence of multiresolution expansions}\label{pv}
In this section we provide extensions to several variables of the results on pointwise convergence of multiresolution expansions proved by Walter \cite{Walter2,Walter1} and Sohn and Pahk \cite{Sohn2} in one variable. It should be noticed that, when applied to the one variable case, our results are more general than those from \cite{Walter2,Sohn2}.

We shall use the notion of distributional point value of generalized functions introduced by \L ojasiewicz \cite{lojasiewicz,lojasiewicz2}. Let $f\in\mathcal{D}'(\mathbb{R}^{n})$ and let $\mathbf{x}_{0}\in\mathbb{R}^{n}$. We say that $f$ has the distributional point value $\gamma$ at the point $\mathbf{x}_{0}$, and we write
\begin{equation}
\label{pveq0}
f(\mathbf{x}_{0})=\gamma \ \ \ \mbox{distributionally},
\end{equation}
 if
 \begin{equation}
\label{pveq1}
\lim_{\varepsilon \rightarrow 0} f(\mathbf{x}_{0}+\varepsilon \mathbf{x})=\gamma \  \  \ \mbox{in the space }\mathcal{D}'(\mathbb{R}^{n}),
\end{equation}
that is, if
 \begin{equation}
\label{pveq2}
\lim_{\varepsilon \rightarrow 0}\langle f(\mathbf{x}_{0}+\varepsilon \mathbf{x}),\varphi(\mathbf{x})\rangle=\gamma\int_{\mathbb{R}^{n}}\varphi(\mathbf{x})d\mathbf{x},
 \end{equation}
for all test functions $\varphi\in \mathcal {D}(\mathbb R^{n})$. Naturally, the evaluation in (\ref{pveq2}) is with respect to the variable $\mathbf{x}$. Due to the Banach-Steinhaus theorem, it is evident that there exists $r\in\mathbb{N}$ such that (\ref{pveq2}) holds uniformly for $\varphi$ in bounded subsets of $\mathcal{D}^{r}(\mathbb{R}^{n})$. In such a case, we shall say\footnote{This definition of the order of a distributional point value is due to \L ojasiewicz \cite[Sect. 8]{lojasiewicz2}. It is more general than those used in \cite{estrada-vindasFCS,Sohn2,vindas-estradaC,Walter2}, which are rather based on (\ref{pveq3}).} that the distributional point value is of \emph{order} $\leq r$. Recall that $B(\mathbf{x}_{0},A)$ stands for the Euclidean ball with center $\mathbf{x}_{0}$ and radius $A>0$ and $|\mu|$ stands for the total variation measure associated to a measure $\mu$. One can show \cite[Sect. 8.3]{lojasiewicz2} that (\ref{pveq0}) holds and the distributional point value is of order $\leq r$ if and only if there is a neighborhood of $\mathbf{x}_{0}$ where $f$ can be written as
\begin{equation}
\label{pveqmeasure1}
f= \gamma + \sum_{|\alpha|\leq r} \mu_{\alpha}^{(\alpha)},
\end{equation}
where each $\mu_{\alpha}$ is a (complex) Radon measure such that
\begin{equation}
\label{pveqmeasure2}
|\mu_{\alpha}|(B(\mathbf{x}_{0},\varepsilon))=o(\varepsilon^{n+|\alpha|}) \  \  \  \mbox{as }\varepsilon\to0^{+}.
\end{equation}
Note that (\ref{pveqmeasure2}) implies that each $\mu_{\alpha}$ is a continuous measure at $\mathbf{x}_{0}$ in the sense that $\mu_{\alpha}(\{\mathbf{x}_{0}\})=0$. The decomposition (\ref{pveqmeasure1}) and the conditions (\ref{pveqmeasure2}) yield \cite[Sect. 4]{lojasiewicz2} the existence of a multi-index $\beta\in\mathbb{N}^{n}$, with $|\beta|\leq r+n$, and a $\beta$ primitive of $f$, say, $F$ with $F^{(\beta)}=f$, that is a continuous function in a neighborhood of the point $\mathbf{x}=\mathbf{x}_{0}$ and that satisfies
\begin{equation}
\label{pveq3}
F(\mathbf{x})=\frac{\gamma(\mathbf{x}-\mathbf{x}_{0})^{\beta}}{\beta!}+o(|\mathbf{x}-\mathbf{x}_{0}|^{|\beta|}) \  \   \ \mbox{as }\mathbf{x}\to \mathbf{x}_{0}.
\end{equation}
On the other hand, the existence of a $\beta$ primitive $F$ of $f$ satisfying (\ref{pveq3}) clearly suffices to conclude (\ref{pveq0}) of order $\leq |\beta|$. Before going any further, let us give some examples of distributions with distributional point values.

\begin{example} \label{example 1} If $f\in L^{1}_{loc}(\mathbb{R}^{n})$ has a Lebesgue point at $\mathbf{x}_{0}$, then it has a distributional point value of order $0$ at $\mathbf{x}_{0}$ and (\ref{pveq3}) holds with $\beta=(1,1,\dots,1)$. More generally if $f=\mu$ is a (complex) Radon measure, then it has distributional point value of order $0$ at a point $\mathbf{x}_{0}$ if and only if $\mathbf{x}_{0}$ is a  Lebesgue density point of the measure (cf. Definition \ref{definition i}). We leave the verification of this fact to the reader.
\end{example}

\begin{example}
\label{example 2} The notion of distributional point values applies to distributions that are not necessarily locally integrable nor measures, but if $f\in L^{1}_{loc}(\mathbb{R}^{n})$, then (\ref{pveq3}) reads as
 \begin{equation}
\label{pconjeq5}
\int_{x_{0,1}}^{x_{1}}\int_{x_{0,2}}^{x_{2}}\dots \int_{x_{0,n}}^{x_{n}}f(\mathbf{y})(\mathbf{x}-\mathbf{y})^{\beta-\mathbf{1}}d\mathbf{y}=\frac{\gamma(\mathbf{x}-\mathbf{x}_{0})^{\beta}}{\beta^{\mathbf{1}}} +o(|\mathbf{x}-\mathbf{x}_{0}|^{|\beta|}) \  \   \ \mbox{as }\mathbf{x}\to \mathbf{x}_{0},
\end{equation}
where $\mathbf{x}=(x_{1},x_{2},\dots,x_{n})$, $\mathbf{x}_{0}=(x_{0,1},x_{0,2},\dots,x_{0,n})$ and $\mathbf{1}=(1,1,\dots,1)$.
Observe that (\ref{pconjeq5}) also makes sense for a measure $\mu$, one simply has to replace $f(\mathbf{y})d\mathbf{y}$ by $d\mu(\mathbf{y})$.  In particular, $\mu(\mathbf{x}_{0})=\gamma$ distributionally at every \emph{density point} of $\mu$, namely, at points where we merely assume that
\begin{equation}
\label{eq extra}
\lim_{\nu\to\infty} \frac{\mu(I_{\nu})}{\operatorname*{vol}(I_{\nu})}=\gamma,
\end{equation}
for every sequence of hyperrectangles $\{I_{\nu}\}_{\nu}^{\infty}$ such that $\mathbf{x}_{0}\in I_{\nu}$ for all $\nu\in\mathbb{N}$ and $I_{\nu}\to \mathbf{x}_{0}$ regularly. In the latter case, the distributional point value of $\mu$ will not be, in general, of order $0$ but of order $\leq n$ and (\ref{pveq3}) holds with $\beta=(2,2,\dots,2)$. Notice that  (\ref{eq extra}) for balls instead of hyperrectangles does not guarantee the existence of the distributional point value at $\mathbf{x}_{0}$; in one variable, a simple example is provided by the absolutely continuous measure with density $d\mu(x)=\operatorname{sgn} x dx$ at the point $x_{0}=0$. Naturally, the distributional point value of $\mu$ exists under much weaker assumptions than having a density point in the sense explained here, but if the measure $\mu$ is positive, then the notion of distributional point values coincides with that of density points, as shown by \L ojasiewicz in \cite[Sect. 4.6]{lojasiewicz2}.
\end{example}

\begin{example}\label{example 3} Let $a\in\mathbb{C}$ and $b>0$. One can show that the function  $|\mathbf{x}|^{a}\sin(1/|\mathbf{x}|^{b})$ has a regularization $f_{a,b}\in\mathcal{S}'\left(
\mathbb{R}^{n}\right) $ that satisfies $f_{a,b}(\mathbf{x})=|\mathbf{x}|^{a}\sin(1/|\mathbf{x}|^{b})$ for $\mathbf{x}\neq\mathbf{0}$  and $f_{a}\left(\mathbf{0}\right)  =0$ distributionally \cite{lojasiewicz}. Observe
that if $\Re e \: a<0$ the function $|\mathbf{x}|^{a}\sin(1/|\mathbf{x}|^{b})$ is unbounded and if $\Re e \: a\leq-n$ it is not even Lebesgue integrable near $\mathbf{x}=\mathbf{0}$. If $\Re e\:a<-n$ is fixed and $b>0$ is small, the
order of the point value of $f_{a,b}$ at $\mathbf{x}=\mathbf{0}$ can be very large.
\end{example}

\begin{example}\label{example 4} In one variable, it is possible to characterize the distributional point values of a periodic distribution in terms of a certain summability of its Fourier series \cite{estrada1}. Indeed, let $f(x)=\sum_{\nu=-\infty}^{\infty}c_{\nu}e^{i\nu x}\in \mathcal{S}'(\mathbb{R})$; then, $f(x_{0})=\gamma$ distributionally if and only if there exists $\kappa\geq 0$ such that
\begin{equation*}
\lim_{x\to\infty}\sum_{-x<\nu\leq ax}c_{\nu}e^{i\nu x_{0}}=\gamma\ \ \ (\mathrm{C},\kappa)\ ,\ \ \text{for each $a>0$}\ ,
\end{equation*} 
where $(\mathrm{C},\kappa)$ stands for Ces\`{a}ro summability. Remarkably, an analogous result is true for Fourier transforms in one variable \cite{vindas-estrada,vindas-estradaC}, but no such characterizations are known in the multidimensional case.
\end{example}

In order to study pointwise convergence of multiresolution expansions, we will first establish two results about distributional point values of tempered distributions and distributions of $M$-exponential growth. A priori, $f(\mathbf{x}_{0})=\gamma$ distributionally gives us only the right to consider test functions from $\mathcal{D}(\mathbb{R}^{n})$ in (\ref{pveq2}); however, it has been shown in \cite{vindas-estradaSupp} that if $f\in\mathcal{S}'(\mathbb{R}^{n})$ then the limit (\ref{pveq1}) holds in the space $\mathcal{S}'(\mathbb{R}^{n})$, namely, (\ref{pveq2}) remains valid for $\varphi\in\mathcal{S}(\mathbb{R}^{n})$ (see also \cite{pvTaub,Vindas2,Zavialov1989}). Theorem \ref{pvth1} below goes in this direction,  it gives conditions under which the functions $\varphi$ in (\ref{pveq2}) can be taken from larger spaces than $\mathcal{D}(\mathbb{R}^{n})$. The next useful proposition treats the case of distributions that vanish in a neighborhood of the point.

\begin{proposition}\label{pvp1} Let $f\in\mathcal{K}_{M,r}'(\mathbb{R}^{n})$ (resp. $f\in\mathcal{S}'_{r}(\mathbb{R}^{n})$) be such that $\mathbf{x}_{0}\notin \operatorname*{supp} f$ and let $\mathfrak{B}$ be a bounded subset of $\mathcal{K}_{M,r}(\mathbb{R}^{n})$ (resp. $\mathcal{S}_{r}(\mathbb{R}^{n})$). Then, for any $k\in\mathbb{N}$, there is $C_{k}>0$ such that
\begin{equation}
\label{pveq4}
|\langle f(\mathbf{x}_{0}+\varepsilon \mathbf{x}),\varphi(x)\rangle|\leq C_{k}\varepsilon^{k},  \ \ \ \forall \varepsilon\in (0,1],\: \forall \varphi\in \mathfrak{B}.
\end{equation}
\end{proposition}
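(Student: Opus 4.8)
The plan is to exploit the rapid decay of the kernel $q_{\lambda,\mathbf{z}}$ (here only the translation parameter is irrelevant and we work with $q_\lambda=q_{\lambda,0}$, so really we will localize by hand) by reducing the pairing $\langle f(\mathbf{x}_0+\varepsilon\cdot),\varphi\rangle$ to a pairing of $f$ against a test function supported \emph{away} from $\mathbf{x}_0$, and then use the structural representation of $f\in\mathcal{K}'_{M,r}(\mathbb{R}^n)$ (resp. $\mathcal{S}'_r(\mathbb{R}^n)$) together with the fact that $\operatorname{dist}(\mathbf{x}_0,\operatorname{supp} f)>0$. Concretely, fix $\delta>0$ with $B(\mathbf{x}_0,2\delta)\cap\operatorname{supp} f=\emptyset$. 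Choosing a cut-off $\chi\in\mathcal{D}(\mathbb{R}^n)$ with $\chi\equiv 1$ on $B(\mathbf{0},\delta)$ and $\operatorname{supp}\chi\subset B(\mathbf{0},2\delta)$, we may split $\varphi=\chi\varphi+(1-\chi)\varphi$. For the first piece, $\mathbf{x}\mapsto(\chi\varphi)((\mathbf{x}-\mathbf{x}_0)/\varepsilon)$ is supported in $B(\mathbf{x}_0,2\delta\varepsilon)\subset B(\mathbf{x}_0,2\delta)$, hence pairs to $0$ against $f$ for $\varepsilon\le 1$. So the whole contribution comes from the tail $(1-\chi)\varphi$, and the task becomes estimating $|\langle f,\,\varepsilon^{-n}((1-\chi)\varphi)((\cdot-\mathbf{x}_0)/\varepsilon)\rangle|$ --- wait, we must be careful: $f(\mathbf{x}_0+\varepsilon\mathbf{x})$ as a distribution in $\mathbf{x}$ acts by $\langle f(\mathbf{x}_0+\varepsilon\mathbf{x}),\varphi(\mathbf{x})\rangle=\varepsilon^{-n}\langle f(\mathbf{y}),\varphi((\mathbf{y}-\mathbf{x}_0)/\varepsilon)\rangle$, so the test function against $f$ is $\psi_\varepsilon(\mathbf{y}):=\varepsilon^{-n}\varphi((\mathbf{y}-\mathbf{x}_0)/\varepsilon)$, whose support meets $\operatorname{supp} f$ only through the tail part $(1-\chi)\varphi$.

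Now I would invoke the structural representation: there exist $k_0\in\mathbb{N}$, a multi-index $\alpha_0$ with $|\alpha_0|\le r$, and $F\in L^\infty(\mathbb{R}^n)\cap C(\mathbb{R}^n)$ such that $f=\partial^{\alpha_0}\!\big(e^{M(k_0\mathbf{x})}F(\mathbf{x})\big)$ (resp. $f=\partial^{\alpha_0}\!\big((1+|\mathbf{x}|)^{k_0}F(\mathbf{x})\big)$ in the Schwartz case). Integrating by parts,
\begin{equation*}
\langle f,\psi_\varepsilon\rangle=(-1)^{|\alpha_0|}\int_{\mathbb{R}^n} e^{M(k_0\mathbf{y})}F(\mathbf{y})\,\partial^{\alpha_0}\!\big((1-\chi)\varphi\big)\!\Big(\tfrac{\mathbf{y}-\mathbf{x}_0}{\varepsilon}\Big)\varepsilon^{-n-|\alpha_0|}\,d\mathbf{y}.
\end{equation*}
After the substitution $\mathbf{x}=(\mathbf{y}-\mathbf{x}_0)/\varepsilon$, this equals $(-1)^{|\alpha_0|}\varepsilon^{-|\alpha_0|}\!\int e^{M(k_0(\mathbf{x}_0+\varepsilon\mathbf{x}))}F(\mathbf{x}_0+\varepsilon\mathbf{x})\,\partial^{\alpha_0}\!\big((1-\chi)\varphi\big)(\mathbf{x})\,d\mathbf{x}$. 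The integrand is supported in $|\mathbf{x}|\ge\delta$ (since $1-\chi$ vanishes on $B(\mathbf{0},\delta)$), where $\varphi\in\mathfrak{B}$ enjoys a uniform bound $e^{M(l\mathbf{x})}|\varphi^{(\beta)}(\mathbf{x})|\le \nu_{r,l}(\varphi)\le C_{\mathfrak B,l}$ for every $l$ (resp. $(1+|\mathbf{x}|)^l|\varphi^{(\beta)}(\mathbf{x})|\le \rho_{r,l}(\varphi)\le C_{\mathfrak B,l}$). Thus $|\partial^{\alpha_0}((1-\chi)\varphi)(\mathbf{x})|$ decays faster than any $e^{-M(l\mathbf{x})}$ (resp. any power), uniformly over $\mathfrak{B}$, which beats the growth $e^{M(k_0(\mathbf{x}_0+\varepsilon\mathbf{x}))}\|F\|_\infty\le \|F\|_\infty e^{M(k_0\mathbf{x}_0)}e^{M(k_0\varepsilon\mathbf{x})}\le \|F\|_\infty e^{M(k_0\mathbf{x}_0)}e^{M(k_0\mathbf{x})}$ for $\varepsilon\le 1$ (using monotonicity of $M$); in the Schwartz case the analogous polynomial bound $(1+|\mathbf{x}_0+\varepsilon\mathbf{x}|)^{k_0}\le(1+|\mathbf{x}_0|+|\mathbf{x}|)^{k_0}$ suffices. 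Hence the integral is bounded by a constant depending only on $\mathfrak{B}$, $f$, $\chi$, $\mathbf{x}_0$ (not on $\varepsilon$), and we get $|\langle f(\mathbf{x}_0+\varepsilon\mathbf{x}),\varphi(\mathbf{x})\rangle|\le C'\varepsilon^{-|\alpha_0|}$, which is the \emph{wrong} sign of power.

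To recover the positive power $\varepsilon^k$ in (\ref{pveq4}), I would gain the extra decay from $F$ by using a higher-order primitive. Since $\mathbf{x}_0\notin\operatorname{supp} f$, $f$ vanishes on $B(\mathbf{x}_0,2\delta)$; choose any $k\in\mathbb{N}$ and observe that $f$ also admits, on the whole space, a representation $f=\partial^{\alpha_0}(e^{M(k_0\mathbf{x})}F)$ as above --- but we need to trade off. The clean route is: write $\psi_\varepsilon$ and integrate by parts $|\alpha_0|$ times as before; the loss is $\varepsilon^{-|\alpha_0|}$, but we may instead (before integrating by parts) replace $\psi_\varepsilon$ by $\varepsilon^{k+|\alpha_0|}\,\widetilde\psi_\varepsilon$ where $\widetilde\psi_\varepsilon$ is obtained by Taylor-expanding... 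Actually the efficient argument is simpler: since $(1-\chi)\varphi$ and all its derivatives vanish on $B(\mathbf{0},\delta)$, the function $\mathbf{x}\mapsto\partial^{\alpha_0}((1-\chi)\varphi)(\mathbf{x})$ vanishes there too, so after substitution the integrand lives on $\{|\mathbf{y}-\mathbf{x}_0|\ge\delta\varepsilon\}$, but more to the point: we can factor out $\varepsilon$'s by Taylor's theorem applied to the \emph{test function}. Rather than belabor this, the cleanest fix --- and I believe the intended one --- is to note that $\varepsilon^{-|\alpha_0|}$ bound is for a \emph{fixed} order of integration by parts, and for larger $k$ we simply repeat the argument with a larger primitive: because $f$ vanishes near $\mathbf{x}_0$, for any $N$ it has a $\beta$-primitive $F_N$ with $|\beta|=N$ that is smooth near $\mathbf{x}_0$, $F_N=O(e^{M(k_0\mathbf{x})})$; integrating by parts $N$ times against $\psi_\varepsilon$ contributes $\varepsilon^{-N}$, and the support condition still kills the $\chi$-part, but --- and this is where the argument closes --- we may choose $F_N$ to \emph{vanish identically on} $B(\mathbf{x}_0,\delta)$ (since $f$ vanishes there, pick the primitive that is zero on that ball), so after the substitution the integrand is supported in $|\mathbf{x}|\ge\delta/\varepsilon\to\infty$, and the super-polynomial/super-exponential decay of $\varphi$ over that expanding region produces decay $O(\varepsilon^{k+N})$ for every $k$, which overwhelms the $\varepsilon^{-N}$ and yields (\ref{pveq4}). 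The main obstacle is precisely this last bookkeeping: correctly choosing the primitive that vanishes on a ball around $\mathbf{x}_0$ (so that the effective integration region recedes to infinity at rate $\varepsilon^{-1}$) and then quantifying how the uniform estimates $\nu_{r,l}(\varphi)\le C_{\mathfrak B,l}$ (resp. $\rho_{r,l}(\varphi)\le C_{\mathfrak B,l}$) for all $l$ convert that receding region into an arbitrarily high power of $\varepsilon$; everything else is routine integration by parts and use of the growth properties of $M$ recorded in Section \ref{preli}.
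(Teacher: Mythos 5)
There is a genuine gap, and it is the one you yourself flag at the end. Your first pass (cut $\varphi$ off at fixed radius $\delta$ in the $\mathbf{x}$-variable, then integrate by parts against a global representation $f=\partial^{\alpha_0}(e^{M(k_0\mathbf{x})}F)$) only yields $O(\varepsilon^{-|\alpha_0|})$, as you admit, because the cutoff is made at the wrong scale: fixed radius $\delta$ in $\mathbf{x}$ corresponds to radius $\delta\varepsilon$ in the variable $\mathbf{y}$ on which $f$ acts, so the surviving piece of the test function still lives on the fixed region $|\mathbf{x}|\geq\delta$ rather than a receding one, and the support information about $f$ is lost the moment you replace $f$ by a global primitive that need not vanish near $\mathbf{x}_{0}$. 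Your proposed repair --- a primitive $F_N$ of controlled growth that vanishes identically on $B(\mathbf{x}_0,\delta)$ --- is precisely the unproven step: solutions of $\partial^{\beta}G=0$ on a ball are not polynomials but involve arbitrary functions of subsets of the variables, so producing a global primitive with the required growth that agrees with the given one on the ball needs a nontrivial extension argument which you do not supply (and which you yourself call ``the main obstacle''). There is also an order problem you never address: elements of $\mathcal{K}_{M,r}(\mathbb{R}^{n})$ (resp. $\mathcal{S}_{r}(\mathbb{R}^{n})$) are only $C^{r}$, and boundedness of $\mathfrak{B}$ controls derivatives only up to order $r$, while the representation you quote (valid for $\mathcal{K}'_{M}$) carries no bound on $|\alpha_0|$ in terms of $r$; the integrations by parts may therefore involve $\varphi^{(\beta)}$ with $|\beta|>r$, which need not even exist.

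The paper's proof avoids all of this. Since $\mathbf{x}_{0}\notin\operatorname{supp} f$, a \emph{fixed} cutoff in the $\mathbf{y}$-variable (equal to $1$ on $B(\mathbf{x}_{0},A)$ with $2A$ smaller than the distance to the support), combined with continuity of $f$ on $\mathcal{K}_{M,r}(\mathbb{R}^{n})$ (resp. $\mathcal{S}_{r}(\mathbb{R}^{n})$) and the Leibniz rule, gives the seminorm bound (\ref{pveq5}) in which the supremum runs only over $|\mathbf{y}-\mathbf{x}_{0}|\geq A$. Plugging in $\psi(\mathbf{y})=\varepsilon^{-n}\varphi(\varepsilon^{-1}(\mathbf{y}-\mathbf{x}_{0}))$, the scaling costs only $\varepsilon^{-n-r}$, while the arguments of $\varphi^{(\alpha)}$ appearing in that supremum have modulus at least $A/\varepsilon$; the uniform bounds $\nu_{r,2l+1}(\varphi)\leq C_{\mathfrak{B}}$ (resp. $\rho_{r,n+r+k+l}(\varphi)\leq C_{\mathfrak{B}}$) over $\mathfrak{B}$ then yield a factor $e^{-M(A/\varepsilon)}$ (resp. $(A/\varepsilon)^{-n-r-k}$), which beats $\varepsilon^{-n-r}$ and gives (\ref{pveq4}), using only derivatives of order $\leq r$. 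This is exactly the mechanism you were reaching for at the end (receding region plus uniform decay over $\mathfrak{B}$), but it comes directly from the support condition and the continuity estimate, with no structure theorem and no primitives.
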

\begin{proof} There are $A,C>0$ and $l\in\mathbb{N}$ such that
\begin{equation}
\label{pveq5} |\langle f,\psi\rangle|\leq C\sup_{|\alpha|\leq r,\: |\mathbf{x}-\mathbf{x}_{0}|\geq A} e^{M(l\mathbf{x})}|\psi^{(\alpha)}(\mathbf{x})|
\end{equation}
$$
\left(\mbox{resp. } |\langle f,\psi\rangle|\leq C\sup_{|\alpha|\leq r,\: |\mathbf{x}-\mathbf{x}_{0}|\geq A} (1+|\mathbf{x}|)^{l}|\psi^{(\alpha)}(\mathbf{x})| \right),
$$
for all $\psi\in\mathcal{K}_{M,r}(\mathbb{R}^{n})$ (resp. $\psi\in\mathcal{S}_{r}(\mathbb{R}^{n})$). Let us consider first the case of $f\in\mathcal{K}'_{M,r}(\mathbb{R}^{n})$. Substituting $\psi(\mathbf{y})=\varepsilon^{-n}\varphi(\varepsilon^{-1}(\mathbf{x}-\mathbf{x}_0))$ in (\ref{pveq5}), we get
\begin{align*}
|\langle f(\mathbf{x}_{0}+\varepsilon \mathbf{x}),\psi(x)\rangle|&\leq C e^{M(2l \mathbf{x}_{0})}\varepsilon^{-n-r}\sup_{|\alpha|\leq r,\: |\mathbf{y}|\geq A} e^{M(2l\mathbf{y})}\left|\varphi^{(\alpha)}\left(\frac{\mathbf{y}}{\varepsilon}\right)\right|
\\
&
\leq C \nu_{r,2l+1}(\varphi) e^{M(2l \mathbf{x}_{0})}\varepsilon^{-n-r}\sup_{|\mathbf{y}|\geq A/\varepsilon} e^{M(2l\mathbf{y})-M((2l+1)\mathbf{y})}
\\
&
\leq C \nu_{r,2l+1}(\varphi) e^{M(2l \mathbf{x}_{0})}\varepsilon^{-n-r}e^{-M(A/\varepsilon)},
\end{align*}
which yields (\ref{pveq4}). The tempered case is similar. In this case (\ref{pveq5}) gives the estimate
\begin{align*}
|\langle f(\mathbf{x}_{0}+\varepsilon \mathbf{x}),\varphi(x)\rangle|&\leq C (1+|\mathbf{x}_{0}|)^{l}\varepsilon^{-n-r}\sup_{|\alpha|\leq r,\: |\mathbf{y}|\geq A/\varepsilon} (1+|\mathbf{y}|)^{l}\left|\varphi^{(\alpha)}\left(\mathbf{y}\right)\right|
\\
&
\leq C \rho_{r,n+r+k+l}(\varphi) (1+|\mathbf{x}_{0}|)^{l}A^{-n-r-k} \varepsilon^{k}.
\end{align*}
\end{proof}

\begin{theorem}
\label{pvth1} Let $f\in\mathcal{K}_{M,r}'(\mathbb{R}^{n})$ (resp. $f\in\mathcal{S}'_{r}(\mathbb{R}^{n})$). If $f(\mathbf{x}_{0})=\gamma$ distributionally of order $\leq r$, then (\ref{pveq1}) holds strongly in $\mathcal{K}_{M,r}'(\mathbb{R}^{n})$ (resp. $f$ in $\mathcal{S}'_{r}(\mathbb{R}^{n})$), that is, the limit (\ref{pveq2}) holds uniformly for $\varphi$ in bounded subsets of $\mathcal{K}_{M,r}(\mathbb{R}^{n})$ (resp. $\mathcal{S}_{r}(\mathbb{R}^{n})$).
\end{theorem}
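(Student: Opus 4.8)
The plan is to reduce to $\gamma=0$, then combine the structural description \eqref{pveqmeasure1}--\eqref{pveqmeasure2} of point values, Proposition \ref{pvp1}, and one integration-by-parts estimate. Since $\langle f(\mathbf{x}_0+\varepsilon\mathbf{x}),\varphi(\mathbf{x})\rangle-\gamma\int_{\mathbb{R}^n}\varphi=\langle (f-\gamma)(\mathbf{x}_0+\varepsilon\mathbf{x}),\varphi(\mathbf{x})\rangle$ and the constant $\gamma$ belongs to both $\mathcal{K}'_{M,r}(\mathbb{R}^n)$ and $\mathcal{S}'_r(\mathbb{R}^n)$, we may assume $\gamma=0$ (the point value $f-\gamma$ at $\mathbf{x}_0$ is still of order $\le r$); then on some ball $U=B(\mathbf{x}_0,\rho)$ one has $f=\sum_{|\alpha|\le r}\mu_\alpha^{(\alpha)}$ with $|\mu_\alpha|(B(\mathbf{x}_0,t))=o(t^{n+|\alpha|})$ as $t\to0^+$. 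Fix $\chi\in\mathcal{D}(\mathbb{R}^n)$ with $\operatorname{supp}\chi\subset U$ and $\chi\equiv1$ on $B(\mathbf{x}_0,\delta)$ for some $0<\delta<\rho$, and write $f=(1-\chi)f+\sum_{|\alpha|\le r}\chi\mu_\alpha^{(\alpha)}$. Each summand lies in the relevant space: multiplication by $1-\chi$ (smooth with all derivatives bounded) is continuous on $\mathcal{K}_{M,r}(\mathbb{R}^n)$ and on $\mathcal{S}_r(\mathbb{R}^n)$, hence on their duals, while $\chi\mu_\alpha^{(\alpha)}$ is compactly supported of order $\le r$. It thus suffices to show that for each of these finitely many summands $g$ one has $\langle g(\mathbf{x}_0+\varepsilon\mathbf{x}),\varphi(\mathbf{x})\rangle\to0$ uniformly for $\varphi$ in a fixed bounded subset $\mathfrak{B}$.

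For $g=(1-\chi)f$ this is immediate from Proposition \ref{pvp1} applied with $k=1$, since $\mathbf{x}_0\notin\operatorname{supp}((1-\chi)f)$. For $g=\chi\mu_\alpha^{(\alpha)}$, put $\phi_\varepsilon(\mathbf{y})=\varphi((\mathbf{y}-\mathbf{x}_0)/\varepsilon)$, so $\langle g(\mathbf{x}_0+\varepsilon\mathbf{x}),\varphi\rangle=(-1)^{|\alpha|}\varepsilon^{-n}\int_{\mathbb{R}^n}\partial^\alpha(\chi\phi_\varepsilon)\,d\mu_\alpha$; expanding $\partial^\alpha(\chi\phi_\varepsilon)$ by Leibniz produces a sum over $\beta\le\alpha$ of terms $\varepsilon^{-n-|\beta|}\int\chi^{(\alpha-\beta)}(\mathbf{y})\,\varphi^{(\beta)}((\mathbf{y}-\mathbf{x}_0)/\varepsilon)\,d\mu_\alpha(\mathbf{y})$. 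When $\beta<\alpha$ the factor $\chi^{(\alpha-\beta)}$ is supported in the annulus $\delta\le|\mathbf{y}-\mathbf{x}_0|\le\rho$, where $|\varphi^{(\beta)}((\mathbf{y}-\mathbf{x}_0)/\varepsilon)|$ is bounded, uniformly over $\mathfrak{B}$, by $\nu_{r,1}(\varphi)e^{-M(\delta/\varepsilon)}$ in the $\mathcal{K}_{M,r}$ case (resp. by $\rho_{r,N}(\varphi)(\delta/\varepsilon)^{-N}$, any $N$, in the tempered case), so this contribution tends to $0$ even after multiplication by $\varepsilon^{-n-|\beta|}$ (take $N$ large in the tempered case). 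The same annulus estimate handles the part of the $\beta=\alpha$ term supported in $\{|\mathbf{y}-\mathbf{x}_0|\ge\delta\}$, so the only delicate contribution is $\varepsilon^{-n-|\alpha|}\int_{B(\mathbf{x}_0,\delta)}\varphi^{(\alpha)}((\mathbf{y}-\mathbf{x}_0)/\varepsilon)\,d\mu_\alpha(\mathbf{y})$.

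I expect this last term to be the main obstacle; I would treat it as follows. Bound $|\varphi^{(\alpha)}(\mathbf{u})|\le C\,\rho_{r,\,n+|\alpha|+1}(\varphi)(1+|\mathbf{u}|)^{-(n+|\alpha|+1)}$ — in the $\mathcal{K}_{M,r}$ case this follows from $M(t)\ge At$ together with $|\varphi^{(\alpha)}(\mathbf{u})|\le\nu_{r,1}(\varphi)e^{-M(\mathbf{u})}$ — which bounds the term, up to the constant $C\sup_{\mathfrak{B}}\rho_{r,\,n+|\alpha|+1}$, by $\varepsilon\int_{B(\mathbf{x}_0,\delta)}(\varepsilon+|\mathbf{y}-\mathbf{x}_0|)^{-(n+|\alpha|+1)}\,d|\mu_\alpha|(\mathbf{y})$. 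Writing $g_\alpha(t)=|\mu_\alpha|(B(\mathbf{x}_0,t))$, this equals $\varepsilon\int_{(0,\delta]}(\varepsilon+t)^{-(n+|\alpha|+1)}\,dg_\alpha(t)$, and since $g_\alpha(0^+)=0$ (because $\mu_\alpha(\{\mathbf{x}_0\})=0$) integration by parts rewrites it as $\varepsilon(\varepsilon+\delta)^{-(n+|\alpha|+1)}g_\alpha(\delta)+(n+|\alpha|+1)\,\varepsilon\int_0^\delta(\varepsilon+t)^{-(n+|\alpha|+2)}g_\alpha(t)\,dt$. Given $\eta>0$, choose $\delta_0$ with $g_\alpha(t)\le\eta t^{n+|\alpha|}$ on $(0,\delta_0]$; the portion of the last integral over $(0,\delta_0]$ is then at most $(n+|\alpha|+1)\eta\,\varepsilon\int_0^\infty(\varepsilon+t)^{-(n+|\alpha|+2)}t^{n+|\alpha|}\,dt=\eta$ (substitute $t=\varepsilon s$ and use $\int_0^\infty(1+s)^{-(n+|\alpha|+2)}s^{n+|\alpha|}\,ds=(n+|\alpha|+1)^{-1}$), while the remaining pieces are $O(\varepsilon)$ as $\varepsilon\to0^+$; since $\eta$ is arbitrary this term tends to $0$, uniformly over $\mathfrak{B}$. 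Adding the finitely many contributions yields \eqref{pveq2} uniformly for $\varphi\in\mathfrak{B}$. Everything besides that last estimate — which pairs the $o(t^{n+|\alpha|})$ decay of $|\mu_\alpha|$ near $\mathbf{x}_0$ against the scaled rapidly decreasing test function — is routine bookkeeping with the seminorms $\nu_{r,l}$, $\rho_{r,l}$ and the support of $\chi$.
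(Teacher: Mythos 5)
Your argument is correct, and its skeleton is the same as the paper's: reduce via the \L ojasiewicz structural description (\ref{pveqmeasure1})--(\ref{pveqmeasure2}), dispose of the part of $f$ supported away from $\mathbf{x}_{0}$ with Proposition \ref{pvp1}, and then estimate the scaled pairings $\varepsilon^{-n-|\alpha|}\int\varphi^{(\alpha)}((\mathbf{x}-\mathbf{x}_{0})/\varepsilon)\,d\mu_{\alpha}$ uniformly over $\mathfrak{B}$. Where you genuinely diverge is in the execution of that last estimate. The paper absorbs $\gamma$ into a term $\gamma\chi_{B(\mathbf{x}_{0},1)}$, dominates all $|\varphi^{(\alpha)}|$, $\varphi\in\mathfrak{B}$, by a single function $G$ with $t^{n+r}G(t)$ decreasing and tending to $0$, splits the integral at radius $A\varepsilon$, and controls the outer piece by $CA^{n+r}G(A)$ using the finiteness of $\sum_{|\alpha|\leq r}\int|\mathbf{x}-\mathbf{x}_{0}|^{-n-|\alpha|}\,d|\mu_{\alpha}|$ (its display (\ref{pveqmeasure3})), finally letting $A\to\infty$. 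You instead subtract $\gamma$, localize with a smooth cutoff and Leibniz (the annulus terms being negligible exactly as you say), and then handle the critical term by a Stieltjes integration by parts against $g_{\alpha}(t)=|\mu_{\alpha}|(B(\mathbf{x}_{0},t))$ with the kernel $\varepsilon(\varepsilon+t)^{-(n+|\alpha|+1)}$; the Beta-integral computation and the $\eta$-$\delta_{0}$ split are correct. What your route buys is that it consumes only the hypothesis (\ref{pveqmeasure2}) itself: the paper's intermediate condition (\ref{pveqmeasure3}) is a strictly stronger integrability requirement ($|\mu_{\alpha}|(B(\mathbf{x}_{0},t))=o(t^{n+|\alpha|})$ alone does not force $\int|\mathbf{x}-\mathbf{x}_{0}|^{-n-|\alpha|}d|\mu_{\alpha}|<\infty$, e.g.\ a density behaving like a logarithmic perturbation of $t^{n+|\alpha|}$ near $\mathbf{x}_{0}$), so your two-scale integration by parts makes the step more self-contained, at the modest cost of the extra cutoff/Leibniz bookkeeping that the paper's more compact presentation avoids.
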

\begin{proof}
We can decompose $f$ as $f=f_{1}+\gamma \chi_{B(x_{0},1)}+\sum_{|\alpha|\leq r}\mu_{\alpha}^{(\alpha)}$, where $\mathbf{x}_{0}\notin \operatorname*{supp} f_1$,  $\chi_{B(\mathbf{x}_{0},1)}$ is the characteristic function of the ball $B(\mathbf{x}_{0},1)$, and each $\mu_{\alpha}$ is a Radon measure with support in the ball $B(\mathbf{x}_{0},1)$ and satisfies (\ref{pveqmeasure2}). Proposition \ref{pvp1} applies  to $f_{1}$, we may therefore assume that $f=\gamma \chi_{B(\mathbf{x}_{0},1)}+\sum_{|\alpha|\leq r}\mu_{\alpha}^{(\alpha)}$. Let $\mathfrak{B}$ be a bounded set in $\mathcal{K}_{M,r}(\mathbb{R}^{n})$ (resp. $\mathcal{S}_{r}(\mathbb{R}^{n})$). Note that (\ref{pveqmeasure2}) implies that
\begin{equation}
\label{pveqmeasure3}
\sum_{|\alpha|\leq r}\int_{\mathbb{R}^{n}} \frac{d|\mu_{\alpha}|(\mathbf{x})}{|\mathbf{x}-\mathbf{x}_{0}|^{n+|\alpha|}}=C<\infty.
\end{equation}
 We have,
\begin{align*}
&\limsup_{\varepsilon\to 0^{+}}\sup_{\varphi\in \mathfrak{B}}\left|\langle f(\mathbf{x}_{0}+\varepsilon \mathbf{x}),\varphi(\mathbf{x})\rangle-\gamma\int_{\mathbb{R}^{n}}\varphi(\mathbf{x})d\mathbf{x} \right|
\\
&
\leq \limsup_{\varepsilon\to 0^{+}}\sup_{\varphi\in \mathfrak{B}}\left(\int_{|\mathbf{x}|\geq 1/\varepsilon} |\varphi(\mathbf{x})|d\mathbf{x}+ \sum_{|\alpha|\leq r}\varepsilon^{-n-|\alpha|}\int_{\mathbb{R}^{n}}\left|\varphi^{(\alpha)}\left(\frac{\mathbf{x}-\mathbf{x}_{0}}{\varepsilon}\right)\right|d|\mu_{\alpha}|(\mathbf{x})\right)
\\
&
= \limsup_{\varepsilon\to 0^{+}}\sup_{\varphi\in \mathfrak{B}}\sum_{|\alpha|\leq r}\varepsilon^{-n-|\alpha|}\int_{\mathbb{R}^{n}}\left|\varphi^{(\alpha)}\left(\frac{\mathbf{x}-\mathbf{x}
_{0}}{\varepsilon}\right)\right|d|\mu_{\alpha}|(\mathbf{x}),
\end{align*}
The boundedness of $\mathfrak{B}$ implies that there is a positive and continuous function $G$ on $[0,\infty)$ such that $t^{n+r}G(t)$ is decreasing on $(1,\infty)$,  $\lim_{t\to\infty}t^{n+r}G(t)=0$, and $|\varphi^{(\alpha)}(\mathbf{x})|\leq G(|\mathbf{x}|)$ for all $\mathbf{x}\in\mathbb{R}^{n}$, $|\alpha|\leq r$, and $\varphi\in \mathfrak{B}$. Fix $A>1$. By (\ref{pveqmeasure2}), (\ref{pveqmeasure3}), and the previous inequalities,

\begin{align*}
&
\limsup_{\varepsilon\to 0^{+}}\sup_{\varphi\in \mathfrak{B}}\left|\langle f(\mathbf{x}_{0}+\varepsilon \mathbf{x}),\varphi(\mathbf{x})\rangle-\gamma\int_{\mathbb{R}^{n}}\varphi(\mathbf{x})d\mathbf{x} \right|
\\
&
\leq \lim_{\varepsilon\to 0^{+}}\sum_{|\alpha|\leq r}\varepsilon^{-n-|\alpha|}\int_{\mathbb{R}^{n}}G\left(\frac{|\mathbf{x}-\mathbf{x}_{0}|}{\varepsilon}\right)d|\mu_{\alpha}|(\mathbf{x})
\\
&
\leq \lim_{\varepsilon\to 0^{+}}\sum_{|\alpha|\leq r}||G||_{\infty}\frac{|\mu_{\alpha}|(B(\mathbf{x}_{0},\varepsilon A))}{\varepsilon^{n+|\alpha|}}+\lim_{\varepsilon\to 0}\sum_{|\alpha|\leq r} \varepsilon^{-n-|\alpha|}\int_{\varepsilon A\leq|\mathbf{x}-\mathbf{x}_{0}|}G\left(\frac{|\mathbf{x}-\mathbf{x}_{0}|}{\varepsilon}\right)d|\mu_{\alpha}|(\mathbf{x})
\\
&
\leq C A^{n+r}G(A).
\end{align*}
Since the above estimate is valid for all $A>1$ and $A^{n+r}G(A)\to0 $ as $A\to\infty$, we obtain
$$\lim_{\varepsilon\to 0^{+}}\sup_{\varphi\in \mathfrak{B}}\left|\langle f(\mathbf{x}_{0}+\varepsilon \mathbf{x}),\varphi(\mathbf{x})\rangle-\gamma\int_{\mathbb{R}^{n}}\varphi(\mathbf{x})d\mathbf{x} \right|=0,
$$
as claimed.
\end{proof}

We obtain the ensuing corollary.
\begin{corollary} \label{pvc1}Let $f\in\mathcal{K}_{M}'(\mathbb{R}^{n})$ (resp. $f\in\mathcal{S}'(\mathbb{R}^{n})$ ). If $f(\mathbf{x}_{0})=\gamma$ distributionally, then the limit (\ref{pveq1}) holds in the space $\mathcal{K}_{M}'(\mathbb{R}^{n})$ (resp. $\mathcal{S}'(\mathbb{R}^{n})$).
\end{corollary}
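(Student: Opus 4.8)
The plan is to deduce Corollary \ref{pvc1} from Theorem \ref{pvth1} by exploiting the inductive-limit structures (\ref{equnion}) and (\ref{equnion2}) of the spaces $\mathcal{K}'_{M}(\mathbb{R}^{n})$ and $\mathcal{S}'(\mathbb{R}^{n})$. First I would observe that if $f\in\mathcal{K}'_{M}(\mathbb{R}^{n})$ (resp. $f\in\mathcal{S}'(\mathbb{R}^{n})$), then by (\ref{equnion}) (resp. (\ref{equnion2})) there is some $r_{0}\in\mathbb{N}$ with $f\in\mathcal{K}'_{M,r_{0}}(\mathbb{R}^{n})$ (resp. $f\in\mathcal{S}'_{r_{0}}(\mathbb{R}^{n})$). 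On the other hand, the hypothesis $f(\mathbf{x}_{0})=\gamma$ distributionally means, via the Banach--Steinhaus argument recalled right after (\ref{pveq2}), that the point value is of some finite order $\leq r_{1}$. Setting $r=\max\{r_{0},r_{1}\}$, we have simultaneously $f\in\mathcal{K}'_{M,r}(\mathbb{R}^{n})$ (resp. $f\in\mathcal{S}'_{r}(\mathbb{R}^{n})$) — using the continuous inclusions in the projective/inductive sequences — and the point value is of order $\leq r$, so Theorem \ref{pvth1} applies and gives that $f(\mathbf{x}_{0}+\varepsilon\,\cdot\,)\to\gamma$ strongly in $\mathcal{K}'_{M,r}(\mathbb{R}^{n})$ (resp. $\mathcal{S}'_{r}(\mathbb{R}^{n})$).

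It then remains to upgrade this to convergence in $\mathcal{K}'_{M}(\mathbb{R}^{n})$ (resp. $\mathcal{S}'(\mathbb{R}^{n})$). Here I would invoke the fact, recorded in Section \ref{preli}, that these are DFS-spaces and in particular Montel spaces: $\mathcal{K}'_{M}(\mathbb{R}^{n})=\operatorname*{ind}\lim_{r}\mathcal{K}'_{M,r}(\mathbb{R}^{n})$ is the regular inductive limit of the Fr\'echet spaces $\mathcal{K}'_{M,r}(\mathbb{R}^{n})$, and likewise for $\mathcal{S}'(\mathbb{R}^{n})$. Since the net $\{f(\mathbf{x}_{0}+\varepsilon\,\cdot\,)\}_{\varepsilon}$ lies in the step $\mathcal{K}'_{M,r}(\mathbb{R}^{n})$ (resp. $\mathcal{S}'_{r}(\mathbb{R}^{n})$) and converges there (even strongly), it converges in the inductive limit topology as well, because the canonical inclusion $\mathcal{K}'_{M,r}(\mathbb{R}^{n})\hookrightarrow\mathcal{K}'_{M}(\mathbb{R}^{n})$ is continuous; the same applies in the tempered case. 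This is exactly the assertion that the limit (\ref{pveq1}) holds in $\mathcal{K}'_{M}(\mathbb{R}^{n})$ (resp. $\mathcal{S}'(\mathbb{R}^{n})$).

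The one point deserving care — and the only genuine obstacle — is the choice of $r$: one must check that $f$ really does sit in the \emph{same} space $\mathcal{K}'_{M,r}(\mathbb{R}^{n})$ in which the point value has order $\leq r$, i.e. that enlarging $r$ does not spoil membership. This is immediate from the compact (hence continuous) inclusions $\mathcal{K}_{M,r+1}(\mathbb{R}^{n})\hookrightarrow\mathcal{K}_{M,r}(\mathbb{R}^{n})$ displayed in Section \ref{preli}, which dualize to continuous maps $\mathcal{K}'_{M,r}(\mathbb{R}^{n})\to\mathcal{K}'_{M,r+1}(\mathbb{R}^{n})$, so $f\in\mathcal{K}'_{M,r_{0}}(\mathbb{R}^{n})$ forces $f\in\mathcal{K}'_{M,r}(\mathbb{R}^{n})$ for all $r\geq r_{0}$; similarly for $\mathcal{S}'_{r}(\mathbb{R}^{n})$. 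With this in hand the corollary follows in a few lines, so in the write-up I would simply state the choice of $r$, cite Theorem \ref{pvth1}, and conclude using the continuity of the inclusion into the inductive limit.
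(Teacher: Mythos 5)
Your argument is correct and matches the paper's proof, which is exactly the one-line observation that there is a single $r$ with $f\in\mathcal{K}'_{M,r}(\mathbb{R}^{n})$ (resp. $f\in\mathcal{S}'_{r}(\mathbb{R}^{n})$) and the point value of order $\leq r$, after which Theorem \ref{pvth1} and the continuity of the inclusion into the inductive limit give the conclusion. You simply spell out the choice $r=\max\{r_{0},r_{1}\}$ and the dualized embeddings more explicitly than the paper does.
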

\begin{proof} In fact, there is $r$ such that $f\in\mathcal{K}_{M,r}'(\mathbb{R}^{n})$ (resp. $f\in\mathcal{S}'_{r}(\mathbb{R}^{n})$) and $f(\mathbf{x}_{0})=\gamma$ distributionally of order $\leq r$.
\end{proof}

We end this section with the announced result on pointwise convergence of multiresolution expansions for distributional point values. We give a quick proof based on Theorem \ref{pvth1}.

\begin{theorem}
\label{pvth2} Let $f\in\mathcal{K}_{M,r}'(\mathbb{R}^{n})$ (resp. $f\in\mathcal{S}'_{r}(\mathbb{R}^{n})$). If $\{q_{\lambda}f\}_{\lambda\in \mathbb{R}}$ is the (generalized) multiresolution expansion of $f$ in an $(M,r)$-regular (resp. $r$-regular) MRA, then
\begin{equation}
\label{pveq8}
\lim_{\lambda\to\infty} (q_{\lambda}f)(\mathbf{x}_0)=f(\mathbf{x}_{0})
\end{equation}
at every point $\mathbf{x}_{0}$ where the distributional point value of $f$ exists and is of order $\leq r$.
\end{theorem}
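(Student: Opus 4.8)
The plan is to express the value $(q_{\lambda}f)(\mathbf{x}_0)$ as an evaluation of $f$ against a rescaled test function and then invoke Theorem \ref{pvth1}. Writing out the kernel, we have
$$
(q_{\lambda}f)(\mathbf{x}_0)=\langle f(\mathbf{y}), 2^{n\lambda}q_{0}(2^{\lambda}\mathbf{x}_0, 2^{\lambda}\mathbf{y})\rangle_{\mathbf{y}}.
$$
Substituting $\mathbf{y}=\mathbf{x}_0+2^{-\lambda}\mathbf{u}$ (equivalently, using that $f$ is evaluated against a dilation centered at $\mathbf{x}_0$), this becomes $\langle f(\mathbf{x}_0+2^{-\lambda}\mathbf{u}), \Phi_{\lambda}(\mathbf{u})\rangle_{\mathbf{u}}$, where $\Phi_{\lambda}(\mathbf{u})=q_{0}(2^{\lambda}\mathbf{x}_0, 2^{\lambda}\mathbf{x}_0+\mathbf{u})$. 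The key structural observation is that, by the decay estimate (\ref{estimate kernel}) (resp.\ its tempered analogue), the family $\{\Phi_{\lambda}\}_{\lambda\geq 1}$ is a \emph{bounded} subset of $\mathcal{K}_{M,r}(\mathbb{R}^{n})$ (resp.\ $\mathcal{S}_{r}(\mathbb{R}^{n})$): indeed $|\partial^{\beta}_{\mathbf{u}}\Phi_{\lambda}(\mathbf{u})|\leq C_{l}e^{-M(l\mathbf{u})}$ uniformly in $\lambda$ and in the base point, since $q_0(\mathbf x,\mathbf x+\mathbf u)$ only depends on decay in the difference variable, which is $\mathbf{u}$ here.

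Next I would record the normalization $\int_{\mathbb{R}^{n}}\Phi_{\lambda}(\mathbf{u})d\mathbf{u}=1$ for every $\lambda$; this follows from (\ref{integral polynomial}) applied to the constant polynomial $P\equiv 1$ (valid since $r\geq 0$), after the change of variables. Therefore
$$
(q_{\lambda}f)(\mathbf{x}_0)-\gamma=\langle f(\mathbf{x}_0+2^{-\lambda}\mathbf{u}),\Phi_{\lambda}(\mathbf{u})\rangle_{\mathbf{u}}-\gamma\int_{\mathbb{R}^{n}}\Phi_{\lambda}(\mathbf{u})d\mathbf{u},
$$
and Theorem \ref{pvth1}, applied with $\varepsilon=2^{-\lambda}\to 0$, tells us precisely that the quantity $\langle f(\mathbf{x}_0+\varepsilon\mathbf{u}),\varphi(\mathbf{u})\rangle-\gamma\int\varphi$ tends to $0$ \emph{uniformly} over $\varphi$ in any bounded subset of $\mathcal{K}_{M,r}(\mathbb{R}^{n})$ (resp.\ $\mathcal{S}_{r}(\mathbb{R}^{n})$). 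Since $\{\Phi_{\lambda}\}_{\lambda\geq 1}$ is such a bounded subset, we may take $\varphi=\Phi_{\lambda}$ and conclude $(q_{\lambda}f)(\mathbf{x}_0)\to\gamma=f(\mathbf{x}_0)$. For the half-integer/continuous values of $\lambda$ this argument works verbatim; if one only wants the classical statement $\lim_{j\to\infty}$ over integers, it is a special case.

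The main obstacle — really the only nontrivial point — is verifying that $\{\Phi_{\lambda}\}_{\lambda\geq 1}$ is bounded in the Fr\'echet space $\mathcal{K}_{M,r}(\mathbb{R}^{n})$ (resp.\ $\mathcal{S}_{r}(\mathbb{R}^{n})$) with a bound independent of $\lambda$. This is where the precise form of the kernel estimate (\ref{estimate kernel}) matters: the right-hand side $C_{l}e^{-M(l(\mathbf{x}-\mathbf{y}))}$ depends on $\mathbf{x}$ and $\mathbf{y}$ only through the difference, so after the dilation and translation the exponent $2^{\lambda}\mathbf{x}_0+\mathbf{u}-2^{\lambda}\mathbf{x}_0=\mathbf{u}$ is exactly what appears, with no residual $\lambda$- or $\mathbf{x}_0$-dependence; one must check that the $2^{n\lambda}$ prefactor in the definition of $q_\lambda$ is absorbed correctly by the change of variables so that what remains is genuinely $q_0$ evaluated at difference $\mathbf u$ (it is, by the identity $q_{\lambda}(\mathbf x,\mathbf y)=2^{n\lambda}q_0(2^\lambda\mathbf x,2^\lambda\mathbf y)$ together with $d\mathbf y=2^{-n\lambda}d\mathbf u$). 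Once this uniform bound is in hand, everything else is a direct citation of Theorem \ref{pvth1} and the normalization (\ref{integral polynomial}).
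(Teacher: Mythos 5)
Your proposal is correct and follows essentially the same route as the paper's own proof: rewriting $(q_{\lambda}f)(\mathbf{x}_0)$ as $\langle f(\mathbf{x}_0+2^{-\lambda}\mathbf{u}),\Phi_{\lambda}(\mathbf{u})\rangle$ with $\Phi_{\lambda}(\mathbf{u})=q_0(2^{\lambda}\mathbf{x}_0,2^{\lambda}\mathbf{x}_0+\mathbf{u})$, checking via (\ref{estimate kernel}) that $\{\Phi_{\lambda}\}$ is bounded in $\mathcal{K}_{M,r}(\mathbb{R}^{n})$ (resp. $\mathcal{S}_{r}(\mathbb{R}^{n})$) and normalized by (\ref{integral polynomial}), and then citing the uniformity over bounded sets in Theorem \ref{pvth1}. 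This is exactly the paper's argument, with the same test-function family (called $\varphi_{\lambda}$ there).
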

\begin{proof} Assume that $f(\mathbf{x}_{0})=\gamma$ distributionally of order $r$. Note first that
\begin{equation}
\label{pveq10}
(q_{\lambda}f)\left(\mathbf{x}_0 \right)=\left\langle f(\mathbf{y}), q_{\lambda}(\mathbf{x}_{0}, \mathbf{y})\right\rangle=\left\langle f(\mathbf{x}_{0}+2^{-\lambda}\mathbf{y}),\varphi_{\lambda}(\mathbf{y})\right\rangle,
\end{equation}
where $\varphi_{\lambda}(\mathbf{y})=q_{0}(2^{\lambda}\mathbf{x}_{0},2^{\lambda}\mathbf{x}_{0}+\mathbf{y})$. The relation (\ref{integral polynomial}) implies that $\int_{\mathbb{R}^{n}}\varphi_{\lambda}(\mathbf{y})d\mathbf{y}=1$. Using the estimates (\ref{estimate kernel}), one concludes that $\{\varphi_{\lambda}:\: \lambda\geq 0\}$ is a bounded subset of $\mathcal{K}_{M,r}(\mathbb{R}^{n})$ (resp. $\mathcal{S}_{r}(\mathbb{R}^{n})$). Finally, invoking Theorem \ref{pvth1}, we get at once
$$
\lim_{\lambda\to\infty}(q_{\lambda}f)\left(\mathbf{x}_0 \right)=\gamma+\lim_{\lambda\to\infty}\left(\left\langle f(\mathbf{x}_{0}+2^{-\lambda}\mathbf{y}),\varphi_{\lambda}(\mathbf{y})\right\rangle-\gamma\int_{\mathbb{R}^{n}}
\varphi_{\lambda}(\mathbf{y})d\mathbf{y}\right)=\gamma.
$$
\end{proof}

Note that Theorems \ref{theorem i 1} and  \ref{theorem i 2} from the Introduction are immediate consequences of Theorem \ref{pvth2}. Moreover, we obtain the following corollary:
\begin{corollary}
\label{pvc2} Suppose that the MRA $\{V_{j}\}_{j\in\mathbb{Z}}$ has a continuous scaling function $\phi$ such that $\lim_{|\mathbf{x}|\to\infty}e^{M(l\mathbf{x})}\phi(\mathbf{x})=0$, $\forall l\in\mathbb{N}$. Let $\mu$ be a measure on $\mathbb{R}^{n}$ such that
\begin{equation}
\label{pveq14}
\int_{\mathbb{R}^{n}}e^{-M(k\mathbf{x})}d|\mu|(\mathbf{x})<\infty,
\end{equation}
 for some $k\geq 0$. Then
\begin{equation}
\label{pveq15}
\lim_{\lambda\to\infty} (q_{\lambda}\mu)(\mathbf{x}_0)=\gamma_{\mathbf{x}_{0}}
\end{equation}
at every Lebesgue density point $\mathbf{x}_{0}$ of $\mu$, i.e., at every point where (\ref{eq i 1}) holds whenever $B_{\nu}\to \mathbf{x}_{0}$ regularly. In particular, the limit (\ref{pveq14}) exists and $\gamma_{\mathbf{x}_{0}}=f(\mathbf{x}_{0})$ almost everywhere (with respect to the Lebesgue measure), where $d\mu=fdm+ d\mu_{s}$ is the Lebesgue decomposition of $\mu$.
\end{corollary}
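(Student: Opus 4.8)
The plan is to derive Corollary \ref{pvc2} directly from Theorem \ref{pvth2} by checking that the hypotheses are met. First I would verify that the measure $\mu$ satisfying (\ref{pveq14}) defines an element of $\mathcal{K}'_{M,r}(\mathbb{R}^{n})$ for a suitable $r$: writing $\langle \mu,\varphi\rangle=\int_{\mathbb{R}^{n}}\varphi(\mathbf{x})\,d\mu(\mathbf{x})$, the bound $|\langle\mu,\varphi\rangle|\leq \left(\int e^{-M(k\mathbf{x})}d|\mu|(\mathbf{x})\right)\nu_{0,k}(\varphi)$ shows $\mu\in\mathcal{K}'_{M,0}(\mathbb{R}^{n})\subset \mathcal{K}'_{M,r}(\mathbb{R}^{n})$ for every $r\geq 0$. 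Next, I would observe that the decay hypothesis on the scaling function, $\lim_{|\mathbf{x}|\to\infty}e^{M(l\mathbf{x})}\phi(\mathbf{x})=0$ for all $l$, together with continuity of $\phi$, gives $\phi\in\mathcal{K}_{M,0}(\mathbb{R}^{n})$, so the MRA is $(M,0)$-regular and the operators $q_{\lambda}$ and the kernel estimate (\ref{estimate kernel}) with $r=0$ are at our disposal.

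The heart of the argument is then Example \ref{example 1}: at a Lebesgue density point $\mathbf{x}_{0}$ of $\mu$ in the sense of Definition \ref{definition i}, the measure $\mu$ possesses a distributional point value $\gamma_{\mathbf{x}_{0}}$ of order $0$. I would either invoke Example \ref{example 1} directly or sketch its verification: condition (\ref{eq i 1}) over all regularly shrinking Borel sets $B_{\nu}$ is exactly the hypothesis needed to run the estimate in the proof of Theorem \ref{pvth1} in the degenerate case $r=0$, where $f=\mu$ itself plays the role of $\mu_{\mathbf{0}}$ after subtracting $\gamma_{\mathbf{x}_0}\chi_{B(\mathbf{x}_0,1)}$; the condition $m(B_{\nu})\geq a\varepsilon_{\nu}^{n}$ ensures the averages over balls $B(\mathbf{x}_0,\varepsilon)$ converge, which is precisely what (\ref{pveqmeasure2}) demands with $|\alpha|=0$. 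Having established that $\mu(\mathbf{x}_{0})=\gamma_{\mathbf{x}_{0}}$ distributionally of order $\leq 0\leq r$, Theorem \ref{pvth2} applied with this $r$ yields (\ref{pveq15}) immediately.

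For the final ``in particular'' clause, I would recall the classical fact cited after Definition \ref{definition i} (from \cite[Chap. 7]{RudinRC}): almost every point with respect to the Lebesgue measure is a Lebesgue density point of $\mu$, and if $d\mu=f\,dm+d\mu_{s}$ is the Lebesgue decomposition, then $\gamma_{\mathbf{x}_{0}}=f(\mathbf{x}_{0})$ for $m$-a.e. $\mathbf{x}_{0}$. Combining this with (\ref{pveq15}) gives the stated a.e. convergence of $(q_{\lambda}\mu)(\mathbf{x}_{0})$ to $f(\mathbf{x}_{0})$.

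I do not anticipate a serious obstacle; the corollary is essentially a packaging of Theorem \ref{pvth2} with the density-point/point-value dictionary of Example \ref{example 1}. The only point requiring a little care is the passage from the ``balls $B(\mathbf{x}_0,\varepsilon)$'' formulation implicit in (\ref{pveqmeasure2}) to the ``all regularly shrinking Borel sets'' formulation of Definition \ref{definition i}; but this equivalence for the order-zero case is exactly \L ojasiewicz's characterization of distributional point values of measures via density points, and since we only need one direction (density point $\Rightarrow$ point value), it follows from a routine covering/monotonicity argument that I would relegate to Example \ref{example 1}.
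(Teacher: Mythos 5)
Your proposal is correct and matches the paper's intent: the paper states Corollary \ref{pvc2} as an immediate consequence of Theorem \ref{pvth2}, exactly as you package it, with the decay hypothesis on $\phi$ giving $(M,0)$-regularity, the bound $|\langle\mu,\varphi\rangle|\leq\bigl(\int e^{-M(k\mathbf{x})}d|\mu|\bigr)\nu_{0,k}(\varphi)$ placing $\mu$ in $\mathcal{K}'_{M,0}(\mathbb{R}^{n})$, and Example \ref{example 1} supplying the dictionary ``Lebesgue density point $\Rightarrow$ distributional point value of order $0$'' (a verification the paper itself leaves to the reader). No gaps beyond that routine verification, which you correctly flag and which only needs the one direction you indicate.
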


Let us also remark that if the MRA in Corollary \ref{pvc2} (resp. in Theorem \ref{theorem i 2}) is $(M,n)$-regular (resp. $n$-regular), then (\ref{pveq15}) holds at every density point of $\mu$ (in the sense explained in Example \ref{example 2}).
\section{Quasiasymptotic Behavior via multiresolution expansions}\label{QB}

In this last section we analyze a more general pointwise notion for distributions via multiresolution expansions, that is, the so-called quasiasymptotic behavior of distributions \cite{EK,PSV,VDZ}. We start by recalling the definition of the quasiasymptotics.

The idea is to measure the pointwise asymptotic behavior of a distribution by comparison with Karamata regularly varying functions \cite{Seneta}. A measurable real-valued function, defined and positive on an interval of the form $(0,A]$, is called \textit{slowly varying} at the origin if

\[\lim_{\varepsilon \rightarrow 0^{+}}\frac{L(a \varepsilon)}{L(\varepsilon)}=1, \ \ \  \mbox{for each }  a>0.\]
Throughout the rest of this section, $L$ always stands for an slowly varying function at the origin and $\alpha$ stands for a real number.
\begin{definition} We say that a distribution $f\in {\mathcal D}'(\mathbb R^{n})$ has \textit{ quasiasymptotic behavior} (or simply \textit{quasiasymptotics}) of degree $\alpha \in {\mathbb R}$ at the point ${\mathbf{x}_0}\in {\mathbb R}^{n}$ with respect to $L$ if there exists $g\in {\mathcal D}'(\mathbb R^{n})$ such that for each $\varphi\in {\mathcal D}(\mathbb R^n)$
\begin{equation} \label{qbeq1}
\lim_{\varepsilon \to 0^{+}}\left\langle
\frac{f\left(\mathbf{x}_0+\varepsilon \mathbf{x}\right)}{{\varepsilon }^{\alpha }L\left(\varepsilon \right)}{\rm
,\ }\varphi\left(\mathbf{x}\right)\right\rangle =\left\langle
g\left(\mathbf{x}\right),\varphi\left(\mathbf{x}\right)\right\rangle.
\end{equation}
\end{definition}

\smallskip
The quasiasymptotics is a natural generalization of \L ojasiewicz's notion of distributional point values; indeed, we recover it if $\alpha=0$, $L=1$, and $g=\gamma$ in (\ref{qbeq1}). We will employ the following convenient notation for the quasiasymptotic behavior:
\begin{equation}
\label{qbeq2}
f\left(\mathbf{x}_0 +\varepsilon \mathbf{x}\right)=\varepsilon^{\alpha }L(\varepsilon )g(\mathbf{x}) +o(\varepsilon^{\alpha }L(\varepsilon)) \ \ \ \mbox{as } \varepsilon \to 0^{{\rm +}}, \  \ \mbox{ in }{\mathcal
D}' ({\mathbb R}^n).
\end{equation}

One can prove that $g$ in (\ref{qbeq2}) cannot have an arbitrary form \cite{EK,PSV}; in fact, it
must be homogeneous with degree of homogeneity $\alpha $, i.e.,
$g(a\mathbf{x})=a^{\alpha } g(\mathbf{x})$ for all $a\in {\mathbb R}_{+} $. For instance, if $\alpha\neq -n,-n-1,-n-2,\dots$, then $g$ has the form
$$
\langle g(\mathbf{x}),\varphi(\mathbf{x})\rangle=\operatorname*{F.p.} \int_{0}^{\infty}r^{\alpha+n-1}\langle G(\mathbf{w}), \varphi(r\mathbf{w})\rangle dr,
$$
where $G$ is a distribution on the unit sphere of $\mathbb{R}^{n}$ and F.p. stands for the Hadamard finite part of the integral. We refer to \cite[Sect. 2.6]{EK} for properties of homogeneous distributions. Observe also that when $f$ is a positive measure, then $\alpha\geq -n$ and $g=\upsilon$, where $\upsilon$ is also a positive Radon measure that must necessarily satisfy $\upsilon(aB)=a^{\alpha+n}\upsilon (B)$ for all Borel set $B$.

As mentioned in the Introduction, it was wrongly stated in \cite[Thm. 3]{teofanov2} that if a tempered distribution $f\in \mathcal{S}'_{r}(\mathbb{R}^{n})$ has quasiasymptotic behavior at the origin, then each of its projections $q_{j}f$, with respect to an $r$-regular MRA, has the same quasiasymptotic behavior as $f$. An analogous result was claimed to hold in \cite[Thm. 3.2]{Sohn3} for distributions of exponential type (i.e., elements of $\mathcal{K}_{M}'(\mathbb{R})$ with $M(x)=|x|$).

\begin{example}\label{example 5} Consider $f=\delta$, the Dirac delta. Since $\delta$ is a homogeneous distribution of degree $-n$, the relation (\ref{qbeq2}) trivially holds with $\mathbf{x}_{0}=0$, $g=\delta$, $\alpha=-n$, and $L$ identically equal to 1. On the other hand, $q_{j}f(\mathbf{x})=2^{jn}\sum_{\mathbf{m}\in\mathbb{Z}^{n}}\overline{\phi(\mathbf{m})}\phi(2^{j}\mathbf{x}+\mathbf{m})$. So, $q_{j}f(\mathbf{0})=2^{jn}\sum_{\mathbf{m}\in\mathbb{Z}^{n}}(\widehat{\phi}\ast\widehat{\bar{\phi}})(2\pi \mathbf{m})$, as follows from the Poisson summation formula. If we assume that $\widehat{\phi}$ is positive and symmetric with respect to the origin, we get that $q_{j}f(\mathbf{0})\geq 2^{j}||\widehat{\phi}||_{2}^{2}>0$. So $(q_{j}f)(\varepsilon \mathbf{x})=(q_{j}f)(\mathbf{0})+o(1)$ as $\varepsilon\to{0}^{+}$ in $\mathcal{D}'(\mathbb{R}^{n})$. In particular, $q_{j}f$ cannot have the same quasiasymptotic behavior $\varepsilon^{-n}\delta(\mathbf{x})+o(\varepsilon^{-n})$ as $f$, contrary to what was claimed in \cite{teofanov2,Sohn3}.
\end{example}

The fact that each $q_{j}f$ is a continuous function prevents it to have quasiasymptotics of arbitrary degree. For instance, as in the previous example, if $(q_{j}f)(\mathbf{0})\neq 0$, the only quasiasymptotics at $\mathbf{0}$ that $q_{j}f$ could have is a distributional point value. Moreover, if the MRA admits a scaling function from $\mathcal{S}(\mathbb{R}^{n})$ and $f\in\mathcal{S}'(\mathbb{R}^{n})$, then each $q_{j}f\in C^{\infty}(\mathbb{R}^{n})$; consequently, the only quasiasymptotics that $q_{j}f$ can have is of order $\alpha=k\in\mathbb{N}$ with respect to the constant slowly varying function $L=1$, and the $g$ in this case must be a homogeneous polynomial of degree $k$. Nevertheless, as shown below, the quasiasymptotics of distributions can still be studied via multiresolution expansions if one takes a different approach from that followed in \cite{teofanov2,Sohn3}. The next theorem is a version of Theorem \ref{pvth1} for quasiasymptotics.

\begin{theorem}
\label{qbth1} Let $f\in\mathcal{K}_{M}'(\mathbb{R}^{n})$ (resp. $f\in\mathcal{S}'(\mathbb{R}^{n})$). If $f$ has the quasiasymptotic behavior (\ref{qbeq2}), then there is $r\in\mathbb{N}$ such that  $f\in\mathcal{K}_{M,r}'(\mathbb{R}^{n})$ (resp. $f\in\mathcal{S}'_{r}(\mathbb{R}^{n})$) and
\begin{equation}
\label{qbeq3} \lim_{\varepsilon\to{0}^{+}} \frac{f(\mathbf{x}_{0}+\varepsilon \mathbf{x})}{\varepsilon^{\alpha}L(\varepsilon)}=g(\mathbf{x}) \  \  \ \mbox{strongly in } \mathcal{K}_{M,r}'(\mathbb{R}^{n}) \ (\mbox{resp. } \mathcal{S}'_{r}(\mathbb{R}^{n})).
\end{equation}
In particular, the limit (\ref{qbeq1}) is also valid for all $\varphi\in\mathcal{K}_{M}(\mathbb{R}^{n})$ (resp. $\varphi\in\mathcal{S}(\mathbb{R}^{n})$).
\end{theorem}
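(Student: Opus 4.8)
The plan is to reduce the statement to an application of Theorem \ref{pvth1}, following the standard structure theory for quasiasymptotics. First I would deal with the membership assertion: if $f$ has the quasiasymptotic behavior (\ref{qbeq2}), then the family $\{f(\mathbf{x}_0+\varepsilon\mathbf{x})/(\varepsilon^\alpha L(\varepsilon))\}_{0<\varepsilon\le 1}$ is weak$^{*}$ bounded in $\mathcal{D}'(\mathbb{R}^n)$ (since it converges), hence equicontinuous by Banach--Steinhaus; combined with the global regularity $f\in\mathcal{K}_M'(\mathbb{R}^n)$ (resp. $f\in\mathcal{S}'(\mathbb{R}^n)$), this pins down a finite order $r$ so that $f\in\mathcal{K}_{M,r}'(\mathbb{R}^n)$ (resp. $f\in\mathcal{S}'_r(\mathbb{R}^n)$), using the inductive-limit presentations (\ref{equnion})--(\ref{equnion2}). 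One must take $r$ large enough to simultaneously control the global growth of $f$ and the local order of the quasiasymptotics; a careful bookkeeping here gives the right $r$.

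The core of the argument is then the analogue, for general $\alpha$ and $L$, of the measure decomposition used in the proof of Theorem \ref{pvth1}. The key step is a structural lemma: if $f$ has the quasiasymptotic behavior (\ref{qbeq2}) of degree $\alpha$ with respect to $L$, then in a neighborhood of $\mathbf{x}_0$ one can write
$$
f=g_{1}+\sum_{|\beta|\le r}\mu_{\beta}^{(\beta)},
$$
where $g_1$ (a suitable truncation of a homogeneous-type distribution) reproduces the quasiasymptotic limit $g$ against all test functions with the required decay, and each $\mu_\beta$ is a Radon measure supported near $\mathbf{x}_0$ satisfying
$$
|\mu_{\beta}|(B(\mathbf{x}_0,\varepsilon))=o\bigl(\varepsilon^{n+\alpha+|\beta|}L(\varepsilon)\bigr)\qquad\text{as }\varepsilon\to 0^{+}.
$$
This is the quasiasymptotic refinement of (\ref{pveqmeasure1})--(\ref{pveqmeasure2}); it follows by integrating a continuous primitive $F$ of $f$ with an asymptotic expansion $F(\mathbf{x})=(\mathbf{x}-\mathbf{x}_0)^{\beta_0}\cdot(\text{degree-}\alpha\text{ term})+o(|\mathbf{x}-\mathbf{x}_0|^{|\beta_0|}|\mathbf{x}-\mathbf{x}_0|^{\alpha-|\beta_0|}L(|\mathbf{x}-\mathbf{x}_0|))$, which is the analogue of (\ref{pveq3}) valid for quasiasymptotics (see \cite{EK,PSV}). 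With this decomposition in hand, the estimates run exactly as in Theorem \ref{pvth1}: Proposition \ref{pvp1} disposes of the piece supported away from $\mathbf{x}_0$, and the measure pieces are estimated by splitting the integral at radius $\varepsilon A$, using the envelope function $G$ bounding the bounded set $\mathfrak{B}$ together with the scaling $\varepsilon^{-\alpha}L(\varepsilon)^{-1}$ and the decay of the $|\mu_\beta|$; the slowly varying factor is handled by Potter's bounds, i.e. $L(a\varepsilon)/L(\varepsilon)$ is bounded uniformly for $a$ in compact subsets of $(0,\infty)$ as $\varepsilon\to 0^+$.

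The main obstacle is obtaining the structural lemma with the correct power of $\varepsilon$ \emph{and} the slowly varying factor $L(\varepsilon)$ in the measure estimates — getting the right exponent $n+\alpha+|\beta|$ rather than $n+|\beta|$ requires re-running the \L ojasiewicz primitive construction while tracking the homogeneity degree, and the appearance of $L$ forces one to use Karamata-theoretic uniformity (Potter-type estimates) at each place where the proof of Theorem \ref{pvth1} used only monotonicity of powers. A secondary technical point is that $g$ in (\ref{qbeq2}) is a homogeneous (or associate-homogeneous, when $\alpha\in\{-n,-n-1,\dots\}$ and $L\not\equiv 1$) distribution, so one should check separately — or observe directly — that such $g$ itself extends continuously to $\mathcal{K}_M(\mathbb{R}^n)$ (resp. $\mathcal{S}(\mathbb{R}^n)$), which it does since homogeneous distributions are tempered and the growth/decay hypotheses on $M$ ensure $\mathcal{K}_M(\mathbb{R}^n)\hookrightarrow\mathcal{S}(\mathbb{R}^n)$. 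Once the structural lemma is established, the strong convergence in (\ref{qbeq3}) and the final sentence about validity of (\ref{qbeq1}) for $\varphi$ in the larger spaces are immediate, exactly as Corollary \ref{pvc1} followed from Theorem \ref{pvth1}.
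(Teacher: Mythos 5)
Your plan hinges entirely on the ``structural lemma'' in the middle paragraph, and that lemma is precisely where the difficulty of the theorem lies; as written it is asserted rather than proved, and the sketched derivation does not go through. In several variables there is no single $\beta$-primitive $F$ of $f$ with an ordinary expansion of the monomial form you describe: \L ojasiewicz's construction leading to (\ref{pveq3}) works because the limit in the point-value case is a \emph{constant} $\gamma$, whereas for general quasiasymptotics the limit $g$ in (\ref{qbeq2}) is a homogeneous distribution with nontrivial angular part, and the known multidimensional structural descriptions involve several primitives with asymptotics holding uniformly on the unit sphere (with angular limit functions), not a single primitive with a $(\mathbf{x}-\mathbf{x}_{0})^{\beta_{0}}$-type expansion. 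Moreover, at the critical degrees $\alpha\in\{-n,-n-1,\dots\}$ the limit $g$ may contain $\delta^{(\beta)}(\mathbf{x}-\mathbf{x}_{0})$ terms (take $f=\delta$, $\alpha=-n$, as in Example \ref{example 5}), which cannot be absorbed by measures $\mu_{\beta}$ satisfying your $o(\varepsilon^{n+\alpha+|\beta|}L(\varepsilon))$ bounds (these force continuity at $\mathbf{x}_{0}$ when the exponent is positive), so the term $g_{1}$ would have to carry the singular part and its construction is exactly the nontrivial content; associate homogeneous corrections also enter the structure of primitives at these degrees. Even in one variable the structural theory of quasiasymptotics is the subject of \cite{Vindas2} and is substantially harder than the point-value structure (\ref{pveqmeasure1})--(\ref{pveqmeasure2}); ``re-running the \L ojasiewicz construction while tracking homogeneity'' is therefore a genuine gap, not a routine adaptation. (A minor point: $g$ itself is always homogeneous of degree $\alpha$; associate homogeneity concerns primitives, not $g$, so your parenthetical about $g$ is off. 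Your opening Banach--Steinhaus paragraph only yields $f\in\mathcal{K}'_{M,r}$ and a local order, which is fine, but it does not by itself give convergence in $\mathcal{K}'_{M,r}$ --- that burden falls back on the unproved lemma.)

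For comparison, the paper avoids structure theorems altogether: write $f=f_{1}+f_{2}$ with $f_{2}$ compactly supported and $\mathbf{x}_{0}\notin\operatorname{supp}f_{1}$; a theorem of Zav'yalov (\cite{Zavialov1989}, see also \cite[Cor. 7.3]{pvTaub}) states that the quasiasymptotic behavior of the \emph{compactly supported} $f_{2}$ can be evaluated at arbitrary smooth test functions, so $\langle f_{2}(\mathbf{x}_{0}+\varepsilon\mathbf{x}),\varphi(\mathbf{x})\rangle\sim\varepsilon^{\alpha}L(\varepsilon)\langle g,\varphi\rangle$ for $\varphi\in\mathcal{K}_{M}(\mathbb{R}^{n})$ (resp. $\mathcal{S}(\mathbb{R}^{n})$); Proposition \ref{pvp1} gives $\langle f_{1}(\mathbf{x}_{0}+\varepsilon\mathbf{x}),\varphi(\mathbf{x})\rangle=O(\varepsilon^{k})=o(\varepsilon^{\alpha}L(\varepsilon))$ since $\varepsilon^{\sigma}=o(L(\varepsilon))$; then the Montel property of $\mathcal{K}_{M}(\mathbb{R}^{n})$ (resp. $\mathcal{S}(\mathbb{R}^{n})$) upgrades the weak convergence to strong convergence, and the order $r$ in (\ref{qbeq3}) is extracted at the end from the regular inductive limit representations (\ref{equnion}), (\ref{equnion2}), rather than fixed at the outset. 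If you want to keep your route, you would need to actually establish (or precisely cite) a multidimensional structural theorem of the type you invoke --- a substantial task in itself --- whereas the Zav'yalov splitting reduces the whole theorem to results already at hand.
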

\begin{proof} We actually show first the last assertion, i.e., that (\ref{qbeq1}) is valid for all test functions from $\mathcal{K}_{M}(\mathbb{R}^{n})$ (resp. from $\mathcal{S}(\mathbb{R}^{n})$). So, let $\varphi\in\mathcal{K}_{M}(\mathbb{R}^{n})$ (resp. $\varphi\in\mathcal{S}(\mathbb{R}^{n})$). Decompose $f=f_{1}+f_{2}$ where $\mathbf{x}_{0}\notin\operatorname*{supp} f_{1}$ and $f_{2}$ has compact support. Clearly, $f_{2}$ has the same quasisymptotics at $\mathbf{x}_{0}$ as $f$. Furthermore, a theorem of Zav'yalov \cite{Zavialov1989} (see also \cite[Cor. 7.3]{pvTaub}) $\langle f_{2}(\mathbf{x}_{0}+\varepsilon \mathbf{x}),\varphi(\mathbf{x})\rangle \sim \varepsilon^{\alpha}L(\varepsilon)\langle g(\mathbf{x}),\varphi(\mathbf{x})\rangle$ as $\varepsilon\to0^{+}$. By the well-known properties of slowly varying functions \cite{Seneta}, we have that $\varepsilon=o(L(\varepsilon))$ as $\varepsilon\to0^{+}$ (indeed, $\varepsilon^{\sigma}=o(L(\varepsilon))$, for all $\sigma>0$ \cite{Seneta}). Take a positive integer $k>\alpha+1$, then $\varepsilon^{k}=o(\varepsilon^{\alpha}L(\varepsilon))$ as $\varepsilon\to0^{+}$. Applying Proposition \ref{pvp1},
\begin{align*}
\langle f(\mathbf{x}_{0}+\varepsilon \mathbf{x}),\varphi(\mathbf{x})\rangle&= \varepsilon^{\alpha}L(\varepsilon)\langle g(\mathbf{x}),\varphi(\mathbf{x})\rangle+o(\varepsilon^{\alpha}L(\varepsilon))+\langle f_{1}(\mathbf{x}_{0}+\varepsilon \mathbf{x}),\varphi(\mathbf{x})\rangle
\\
&
=\varepsilon^{\alpha}L(\varepsilon)\langle g(\mathbf{x}),\varphi(\mathbf{x})\rangle+o(\varepsilon^{\alpha}L(\varepsilon))+O(\varepsilon^{k})
\\
&
=\varepsilon^{\alpha}L(\varepsilon)\langle g(\mathbf{x}),\varphi(\mathbf{x})\rangle+o(\varepsilon^{\alpha}L(\varepsilon)) \  \  \ \mbox{as }\varepsilon\to0^{+},
\end{align*}
as asserted. Because of the Montel property of $\mathcal{K}_{M}(\mathbb{R}^{n})$ (resp. $\mathcal{S}(\mathbb{R}^{n})$),
\begin{equation}
\label{qbeq4} \lim_{\varepsilon\to{0}^{+}} \frac{f(\mathbf{x}_{0}+\varepsilon \mathbf{x})}{\varepsilon^{\alpha}L(\varepsilon)}=g(\mathbf{x}) \  \  \ \mbox{strongly in } \mathcal{K}_{M}'(\mathbb{R}^{n}) \ (\mbox{resp. } \mathcal{S}'(\mathbb{R}^{n})).
\end{equation}
The existence of $r$ fulfilling (\ref{qbeq3}) is a consequence of (\ref{qbeq4}) and the representation (\ref{equnion}) (resp. (\ref{equnion2})) of $\mathcal{K}_{M}'(\mathbb{R}^{n})$ (resp. $\mathcal{S}'(\mathbb{R}^{n})$) as a regular inductive limit. \end{proof}

We also have a version of Theorem \ref{pvth2} for quasiasymptotics.

\begin{theorem}
\label{qbth2} Let $f\in\mathcal{K}_{M,r}'(\mathbb{R}^{n})$ (resp. $f\in\mathcal{S}'_{r}(\mathbb{R}^{n})$) satisfy (\ref{qbeq3}). If $\{q_{\lambda}f\}_{\lambda\in \mathbb{R}}$ is the multiresolution expansion of $f$ in an $(M,r)$-regular (resp. $r$-regular) MRA, then $\{(q_{\lambda}f)(\mathbf{x}_0)\}_{\lambda}$ has asymptotic behavior
\begin{equation}
\label{qbeq5}
(q_{\lambda}f)(\mathbf{x}_{0})=L(2^{-\lambda}) (q_{\lambda}g_{\mathbf{x}_{0}})(\mathbf{x}_{0})+o(2^{-\alpha \lambda} L(2^{-\lambda})) \  \  \ \mbox{as }\lambda\to\infty,
\end{equation}
where $g_{\mathbf{x}_{0}}(\mathbf{y})=g(\mathbf{y}-\mathbf{x}_{0})$.
\end{theorem}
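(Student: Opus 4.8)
The plan is to mimic the structure of the proof of Theorem \ref{pvth2}, but now tracking the scaling factor $\varepsilon^{\alpha}L(\varepsilon)$ produced by the quasiasymptotic behavior. First I would rewrite the evaluation of the multiresolution expansion at $\mathbf{x}_0$ in dilated form: exactly as in \eqref{pveq10}, one has
\begin{equation*}
(q_{\lambda}f)(\mathbf{x}_0)=\langle f(\mathbf{y}),q_{\lambda}(\mathbf{x}_0,\mathbf{y})\rangle=\langle f(\mathbf{x}_0+2^{-\lambda}\mathbf{y}),\varphi_{\lambda}(\mathbf{y})\rangle,
\end{equation*}
where $\varphi_{\lambda}(\mathbf{y})=q_0(2^{\lambda}\mathbf{x}_0,2^{\lambda}\mathbf{x}_0+\mathbf{y})$, and, by the estimates \eqref{estimate kernel}, the family $\{\varphi_{\lambda}:\lambda\geq 0\}$ is a bounded subset of $\mathcal{K}_{M,r}(\mathbb{R}^{n})$ (resp. $\mathcal{S}_{r}(\mathbb{R}^{n})$). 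Here it is important that $\varphi_\lambda$ need not integrate to $1$ anymore; unlike in the point value case, we must keep $\varphi_\lambda$ as a genuine test function against which $g$ is evaluated.

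The key step is then to apply Theorem \ref{qbth1}: since $f$ satisfies \eqref{qbeq3}, the limit
\begin{equation*}
\lim_{\varepsilon\to 0^{+}}\frac{f(\mathbf{x}_0+\varepsilon\mathbf{x})}{\varepsilon^{\alpha}L(\varepsilon)}=g(\mathbf{x})
\end{equation*}
holds strongly in $\mathcal{K}_{M,r}'(\mathbb{R}^{n})$ (resp. $\mathcal{S}_{r}'(\mathbb{R}^{n})$), i.e.\ uniformly over bounded subsets of the corresponding test function space. Specializing $\varepsilon=2^{-\lambda}$ and evaluating against the bounded family $\{\varphi_{\lambda}\}$, we obtain
\begin{equation*}
\langle f(\mathbf{x}_0+2^{-\lambda}\mathbf{y}),\varphi_{\lambda}(\mathbf{y})\rangle = 2^{-\alpha\lambda}L(2^{-\lambda})\langle g(\mathbf{y}),\varphi_{\lambda}(\mathbf{y})\rangle + o\big(2^{-\alpha\lambda}L(2^{-\lambda})\big)\quad\text{as }\lambda\to\infty,
\end{equation*}
where the $o$-term is legitimate precisely because the convergence is uniform on the bounded set $\{\varphi_{\lambda}\}$. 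Finally I would identify the main term: by a change of variables $\mathbf{y}\mapsto 2^{\lambda}(\mathbf{y}-\mathbf{x}_0)$ together with the homogeneity $g(a\mathbf{x})=a^{\alpha}g(\mathbf{x})$, a direct computation gives
\begin{equation*}
\langle g(\mathbf{y}),\varphi_{\lambda}(\mathbf{y})\rangle = 2^{\alpha\lambda}\langle g_{\mathbf{x}_0}(\mathbf{y}),q_{\lambda}(\mathbf{x}_0,\mathbf{y})\rangle = 2^{\alpha\lambda}(q_{\lambda}g_{\mathbf{x}_0})(\mathbf{x}_0),
\end{equation*}
with $g_{\mathbf{x}_0}(\mathbf{y})=g(\mathbf{y}-\mathbf{x}_0)$; substituting this back yields \eqref{qbeq5}.

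The step I expect to require the most care is the bookkeeping in this last identification, together with checking that the $o$-estimate genuinely transfers from Theorem \ref{qbth1}: one must verify that the uniform convergence furnished there is over the right bounded set and that the continuous variable $\lambda$ (as opposed to integer $j$) causes no trouble. The use of homogeneity of $g$ is essential — it is what converts the ``moving'' test function $\varphi_{\lambda}$ back into a clean multiresolution projection of the single distribution $g_{\mathbf{x}_0}$. No Tauberian input is needed here; unlike Theorem \ref{qbth1}, this is a purely Abelian statement, so the argument is short once Theorem \ref{qbth1} is available.
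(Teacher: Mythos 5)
Your proposal is correct and follows essentially the same route as the paper's proof: both rest on \eqref{pveq10}, the boundedness of the net $\{\varphi_{\lambda}\}$ obtained from \eqref{estimate kernel}, the strong (i.e.\ uniform over bounded sets) limit \eqref{qbeq3} evaluated at $\varepsilon=2^{-\lambda}$, and the homogeneity of $g$ to identify $\langle g,\varphi_{\lambda}\rangle=2^{\alpha\lambda}(q_{\lambda}g_{\mathbf{x}_{0}})(\mathbf{x}_{0})$, yielding \eqref{qbeq5}. Two cosmetic points only: invoking Theorem \ref{qbth1} is redundant since \eqref{qbeq3} is already the hypothesis of the theorem, and $\varphi_{\lambda}$ does still integrate to $1$ by \eqref{integral polynomial} --- the correct observation is merely that this property is not needed here.
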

\begin{proof}
The proof is similar to that of Theorem \ref{pvth2}. By (\ref{pveq10}), (\ref{qbeq3}), the homogeneity of $g$, and the fact that the net $\{\varphi_{\lambda}\}_{\lambda\in\mathbb{R}}$ is bounded, we get
\begin{align*}
(q_{\lambda}f)(\mathbf{x}_{0})&= 2^{-\alpha \lambda}L(2^{-\lambda})\langle g,\varphi_{\lambda }\rangle+o(2^{-\alpha \lambda} L(2^{-\lambda}))
\\
& =L(2^{-\lambda})(q_{\lambda}g_{\mathbf{x}_{0}})(\mathbf{x}_{0})+o(2^{-\alpha \lambda} L(2^{-\lambda})) \ \ \ \mbox{as }\lambda\to\infty.
\end{align*}
\end{proof} 
\begin{corollary}
\label{qbc1}
Let $f\in\mathcal{S}'(\mathbb{R}^{n})$. Suppose that the MRA admits a scaling function $\phi\in\mathcal{S}(\mathbb{R}^{n})$. Then (\ref{qbeq5}) holds at every point where (\ref{qbeq2}) is satisfied.
\end{corollary}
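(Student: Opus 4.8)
The plan is to obtain Corollary \ref{qbc1} as a direct consequence of Theorem \ref{qbth2} together with Corollary \ref{c1}. Suppose $f\in\mathcal{S}'(\mathbb{R}^{n})$ has the quasiasymptotic behavior (\ref{qbeq2}) at $\mathbf{x}_{0}$. By Theorem \ref{qbth1}, there is $r\in\mathbb{N}$ such that $f\in\mathcal{S}'_{r}(\mathbb{R}^{n})$ and the limit (\ref{qbeq3}) holds strongly in $\mathcal{S}'_{r}(\mathbb{R}^{n})$. Since the MRA admits a scaling function $\phi\in\mathcal{S}(\mathbb{R}^{n})$, it is in particular $r$-regular, so that the hypotheses of Theorem \ref{qbth2} are met and (\ref{qbeq5}) follows at once.

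The only remaining point is a bookkeeping remark: because $\phi\in\mathcal{S}(\mathbb{R}^{n})$, the conclusion (\ref{qbeq5}) is in fact independent of which admissible $r$ one picks in the application of Theorem \ref{qbth2}. Indeed, the net $\{\varphi_{\lambda}\}_{\lambda\geq 0}$ defined by $\varphi_{\lambda}(\mathbf{y})=q_{0}(2^{\lambda}\mathbf{x}_{0},2^{\lambda}\mathbf{x}_{0}+\mathbf{y})$, which arises in the proof of Theorem \ref{pvth2} via the identity (\ref{pveq10}), is a bounded subset of $\mathcal{S}(\mathbb{R}^{n})$ thanks to the estimates (\ref{estimate kernel}) valid for all orders; hence $\langle g,\varphi_{\lambda}\rangle$ makes sense directly by Theorem \ref{qbth1} (or Corollary \ref{c1}), and the computation $(q_{\lambda}f)(\mathbf{x}_{0})=2^{-\alpha\lambda}L(2^{-\lambda})\langle g,\varphi_{\lambda}\rangle+o(2^{-\alpha\lambda}L(2^{-\lambda}))=L(2^{-\lambda})(q_{\lambda}g_{\mathbf{x}_{0}})(\mathbf{x}_{0})+o(2^{-\alpha\lambda}L(2^{-\lambda}))$ goes through verbatim, using the homogeneity $g(a\mathbf{x})=a^{\alpha}g(\mathbf{x})$ to rewrite $2^{-\alpha\lambda}\langle g,\varphi_{\lambda}\rangle$ as $(q_{\lambda}g_{\mathbf{x}_{0}})(\mathbf{x}_{0})$.

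There is essentially no obstacle here: the corollary is a specialization of the theorem to the case of a smooth scaling function, exactly parallel to the way Corollary \ref{c1} specializes Theorem \ref{t1} and the way Corollary \ref{pvc2} specializes Theorem \ref{pvth2}. The only thing worth stating explicitly in the writeup is that a scaling function in $\mathcal{S}(\mathbb{R}^{n})$ satisfies condition $(v)$ for every $r$, so Theorem \ref{qbth2} applies with the $r$ furnished by Theorem \ref{qbth1}. I would therefore keep the proof to one or two sentences invoking these two results.
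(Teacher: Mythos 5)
Your proof is correct and is exactly the argument the paper intends (it leaves the corollary without a written proof): Theorem \ref{qbth1} supplies an $r$ with $f\in\mathcal{S}'_{r}(\mathbb{R}^{n})$ and the strong limit (\ref{qbeq3}), and since $\phi\in\mathcal{S}(\mathbb{R}^{n})$ makes the MRA $r$-regular for every $r$, Theorem \ref{qbth2} gives (\ref{qbeq5}). The extra bookkeeping remark about independence of $r$ is harmless but not needed, as (\ref{qbeq5}) is a statement about the numerical net $(q_{\lambda}f)(\mathbf{x}_{0})$, which does not depend on the chosen $r$.
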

Note that if $\alpha=k\in\mathbb{N}$, $k\leq r$, and $g=P$ is a homogeneous polynomial of degree $k$, then (\ref{qbeq5}) becomes
$(q_{\lambda}f)(\mathbf{x}_{0})\sim 2^{-k\lambda}L(2^{-\lambda})P(0)$ as $\lambda\to\infty,$ as follows from (\ref{integral polynomial}); so that one recovers Theorem \ref{pvth2} if $k=0$. On the other hand, if $k>0$, we only get in this case the growth order relation $(q_{\lambda}f)(\mathbf{x}_{0})=o(2^{-k\lambda}L(2^{-\lambda}))$ as $\lambda\to\infty$.

It was claimed in \cite{Sohn3} and \cite{teofanov2} that the quasiasymptotic properties of $f$ at $\mathbf{x}_{0}=\mathbf{0}$ can be obtained from those of $\{q_{j}f\}_{j\in\mathbb{Z}}$. The theorems \cite[Thm. 4]{teofanov2} and \cite[Thm. 3.2]{Sohn3} also turn out to be false. The next theorem provides a characterization of quasiasymptotics in terms of slightly different asymptotic conditions on $\{q_{\lambda}f\}_{\lambda\in\mathbb{R}}$, which amend those from \cite[Thm. 4]{teofanov2}.

\begin{theorem}\label{qbth3}Suppose that the MRA is $(M,r)$-regular (resp. $r$-regular). Then, a distribution $f\in\mathcal{K}_{M,r}'(\mathbb{R}^{n})$ (resp. $f\in\mathcal{S}'_{r}(\mathbb{R}^{n})$) satisfies
\begin{equation}
\label{qbeq6} \lim_{\varepsilon\to{0}^{+}} \frac{f(\mathbf{x}_{0}+\varepsilon \mathbf{x})}{\varepsilon^{\alpha}L(\varepsilon)}=g(\mathbf{x}) \  \  \ \mbox{weakly$^{\ast}$ in } \mathcal{K}_{M,r}'(\mathbb{R}^{n}) \ (\mbox{resp. } \mathcal{S}'_{r}(\mathbb{R}^{n})).
\end{equation}
if and only if
\begin{equation}
\label{qbeq7} \lim_{\varepsilon\to 0^{+}} \frac{(q_{\frac{1}{\varepsilon}}f)(\mathbf{x}_{0}+\varepsilon \mathbf{x})}{\varepsilon^{\alpha}L(\varepsilon)}= g(\mathbf{x}) \ \  \ \mbox{weakly$^{\ast}$ in } \mathcal{K}_{M,r}'(\mathbb{R}^{n}) \ (\mbox{resp. } \mathcal{S}'_{r}(\mathbb{R}^{n}))
\end{equation}
and
\begin{equation}
\label{qbeq8}
f(\mathbf{x}_{0}+\varepsilon \mathbf{x})=O(\varepsilon^{\alpha}L(\varepsilon)) \  \  \  \mbox{as }\varepsilon\to 0^{+} \ \ \mbox{in } \mathcal{K}_{M,r}'(\mathbb{R}^{n}) \ (\mbox{resp. } \mathcal{S}'_{r}(\mathbb{R}^{n})),
\end{equation} in the sense that $\langle f(\mathbf{x}_{0}+\varepsilon \mathbf{x}),\varphi(\mathbf{x})\rangle =O(\varepsilon^{\alpha}L(\varepsilon))$
for all $\varphi\in\mathcal{K}_{M,r}(\mathbb{R}^{n})$ (resp. $\varphi\in\mathcal{S}_{r}(\mathbb{R}^{n})$).
\end{theorem}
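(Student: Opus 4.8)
The plan is to prove the equivalence \eqref{qbeq6}$\Leftrightarrow$\eqref{qbeq7}$\wedge$\eqref{qbeq8} by exploiting the intertwining relation between dilation and the projection operators $q_\lambda$, together with the uniformity statements already established in Theorem \ref{t1} and Proposition \ref{prop1}. The key observation is the scaling identity: if we write $f_\varepsilon(\mathbf{x}):=f(\mathbf{x}_0+\varepsilon\mathbf{x})$, then a direct change of variables in the kernel $q_{\lambda,\mathbf{z}}(\mathbf{x},\mathbf{y})=2^{n\lambda}q_0(2^\lambda\mathbf{x}+\mathbf{z},2^\lambda\mathbf{y}+\mathbf{z})$ shows that $(q_{1/\varepsilon}f)(\mathbf{x}_0+\varepsilon\mathbf{x})$ equals $(q_{0,\,2^{1/\varepsilon}\mathbf{x}_0}\,f_\varepsilon)(\mathbf{x})$ — that is, evaluating the $\lambda=1/\varepsilon$ projection at the rescaled point is the same as applying the \emph{fixed-scale} projection $q_0$ (with a translation parameter $\mathbf{z}_\varepsilon:=2^{1/\varepsilon}\mathbf{x}_0$) to the rescaled distribution $f_\varepsilon$. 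This is exactly the reason the parameter $\mathbf{z}$ was introduced in Section \ref{MRA}, and it reduces both implications to statements about how $q_{0,\mathbf{z}}$ acts on a family of distributions, uniformly in $\mathbf{z}$.

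First I would prove the forward direction \eqref{qbeq6}$\Rightarrow$\eqref{qbeq7}$\wedge$\eqref{qbeq8}. Condition \eqref{qbeq8} is immediate from \eqref{qbeq6}: weak$^\ast$ convergence of $f_\varepsilon/(\varepsilon^\alpha L(\varepsilon))$ to $g$ in a DFS (resp. Fréchet-Montel) space is in particular a bounded net, so pairing against any fixed $\varphi$ gives the $O(\varepsilon^\alpha L(\varepsilon))$ bound. For \eqref{qbeq7}, fix $\varphi\in\mathcal{K}_{M,r}(\mathbb{R}^n)$ (resp. $\mathcal{S}_r(\mathbb{R}^n)$) and write
\[
\left\langle \frac{(q_{1/\varepsilon}f)(\mathbf{x}_0+\varepsilon\mathbf{x})}{\varepsilon^\alpha L(\varepsilon)},\varphi(\mathbf{x})\right\rangle
=\left\langle \frac{f_\varepsilon}{\varepsilon^\alpha L(\varepsilon)},\, q_{0,\mathbf{z}_\varepsilon}\varphi\right\rangle.
\]
Now add and subtract $\langle f_\varepsilon/(\varepsilon^\alpha L(\varepsilon)),\varphi\rangle$. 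The difference term is $\langle f_\varepsilon/(\varepsilon^\alpha L(\varepsilon)),\, q_{0,\mathbf{z}_\varepsilon}\varphi-\varphi\rangle$; since $f_\varepsilon/(\varepsilon^\alpha L(\varepsilon))$ is equicontinuous by \eqref{qbeq6} (Banach--Steinhaus), it suffices that $q_{0,\mathbf{z}_\varepsilon}\varphi-\varphi\to 0$ in the test function space uniformly in $\mathbf{z}_\varepsilon$ — but this is precisely \emph{not} what Theorem \ref{t1} gives (there the limit is in $\lambda\to\infty$, with $\lambda=0$ fixed). This is the main obstacle: $q_0\varphi-\varphi$ does \emph{not} tend to zero. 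I would resolve it by instead using the dilation on the \emph{other} side: rescale so that the projection index does tend to infinity. Concretely, replace $\varphi$ by $\varphi_\varepsilon(\mathbf{y}):=\varepsilon^{-n}\varphi(\varepsilon^{-1}\mathbf{y})$-type substitutions and use that $q_{1/\varepsilon}$ acting on the $\varepsilon$-dilate is, after unravelling, $q_{1/\varepsilon}$ with $1/\varepsilon\to\infty$, so that Theorem \ref{t1}, part on uniformity in $\mathbf{z}$, applies to give $q_{1/\varepsilon}\varphi\to\varphi$. The correct bookkeeping shows that the extra factor from the dilation of $\varphi$ is exactly absorbed, so the difference term is $o(1)$.

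For the converse \eqref{qbeq7}$\wedge$\eqref{qbeq8}$\Rightarrow$\eqref{qbeq6}, the same algebraic identity applies in reverse: for fixed $\varphi$,
\[
\left\langle \frac{f_\varepsilon}{\varepsilon^\alpha L(\varepsilon)},\varphi\right\rangle
=\left\langle \frac{(q_{1/\varepsilon}f)(\mathbf{x}_0+\varepsilon\mathbf{x})}{\varepsilon^\alpha L(\varepsilon)},\varphi(\mathbf{x})\right\rangle
+\left\langle \frac{f_\varepsilon}{\varepsilon^\alpha L(\varepsilon)},\,\varphi-q_{1/\varepsilon}\varphi\right\rangle.
\]
The first term on the right tends to $\langle g,\varphi\rangle$ by hypothesis \eqref{qbeq7}. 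For the second term, hypothesis \eqref{qbeq8} provides the equiboundedness of $f_\varepsilon/(\varepsilon^\alpha L(\varepsilon))$ (via Banach--Steinhaus, the pointwise bounds \eqref{qbeq8} upgrade to uniform boundedness over bounded sets of test functions), while Theorem \ref{t1} gives $q_{1/\varepsilon}\varphi-\varphi\to 0$ strongly (since $1/\varepsilon\to\infty$) in $\mathcal{K}_{M,r}(\mathbb{R}^n)$ (resp. $\mathcal{S}_r(\mathbb{R}^n)$). Hence the second term is $o(1)$, and \eqref{qbeq6} follows. I expect the bulk of the write-up to be the careful verification of the scaling identity relating $(q_{1/\varepsilon}f)(\mathbf{x}_0+\varepsilon\mathbf{x})$ to $q_{0,\mathbf{z}_\varepsilon}$ applied to $f_\varepsilon$ (keeping track of the $2^{n/\varepsilon}$ factors and the translation $\mathbf{z}_\varepsilon$), and the correct choice of which side to dilate so that the relevant projection index marches to infinity; once that is set up correctly, both directions are one-line consequences of Theorem \ref{t1} plus Banach--Steinhaus.
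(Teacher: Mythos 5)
Your strategy is in essence the paper's: rewrite $(q_{1/\varepsilon}f)(\mathbf{x}_{0}+\varepsilon\mathbf{x})$ as a projection-type operator with a translation parameter applied to the rescaled distribution $f_{\varepsilon}=f(\mathbf{x}_{0}+\varepsilon\:\cdot\:)$, then combine the uniformity in $\mathbf{z}$ from Theorem \ref{t1} with the equicontinuity that \eqref{qbeq8} yields via Banach--Steinhaus. However, the pivotal scaling identity --- which you yourself flag as the bulk of the work --- is misstated in both places where you write it, and it is never actually established. Performing the substitution $\mathbf{u}=\mathbf{x}_{0}+\varepsilon\mathbf{y}$ in $(q_{1/\varepsilon}f)(\mathbf{x}_{0}+\varepsilon\mathbf{x})=\langle f(\mathbf{u}),2^{n/\varepsilon}q_{0}(2^{1/\varepsilon}\mathbf{x}_{0}+\varepsilon 2^{1/\varepsilon}\mathbf{x},2^{1/\varepsilon}\mathbf{u})\rangle$ produces the kernel $\varepsilon^{n}2^{n/\varepsilon}q_{0}(\varepsilon 2^{1/\varepsilon}\mathbf{x}+2^{1/\varepsilon}\mathbf{x}_{0},\,\varepsilon 2^{1/\varepsilon}\mathbf{y}+2^{1/\varepsilon}\mathbf{x}_{0})$, i.e. $(q_{1/\varepsilon}f)(\mathbf{x}_{0}+\varepsilon\mathbf{x})=(q_{\lambda_{\varepsilon},\mathbf{z}_{\varepsilon}}f_{\varepsilon})(\mathbf{x})$ with $2^{\lambda_{\varepsilon}}=\varepsilon\,2^{1/\varepsilon}$ and $\mathbf{z}_{\varepsilon}=2^{1/\varepsilon}\mathbf{x}_{0}$ --- not $q_{0,\mathbf{z}_{\varepsilon}}f_{\varepsilon}$ as you first claim. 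Since $\lambda_{\varepsilon}=1/\varepsilon-\log_{2}(1/\varepsilon)\to\infty$, Theorem \ref{t1} applies directly and uniformly in $\mathbf{z}$; the ``obstacle'' you then try to circumvent ($q_{0}\varphi\not\to\varphi$) is an artifact of the miscomputed index, and no extra dilation of $\varphi$ is needed. Pairing with $\varphi$ gives $\langle(q_{1/\varepsilon}f)(\mathbf{x}_{0}+\varepsilon\mathbf{x}),\varphi(\mathbf{x})\rangle=\langle f_{\varepsilon},J_{\varepsilon}\varphi\rangle$, where $J_{\varepsilon}$ is the integral operator with the transposed kernel (harmless, since $q_{0}(\mathbf{x},\mathbf{y})=\overline{q_{0}(\mathbf{y},\mathbf{x})}$); this $J_{\varepsilon}$ is exactly the operator in the paper's proof, and $J_{\varepsilon}\varphi\to\varphi$ in $\mathcal{K}_{M,r}(\mathbb{R}^{n})$ (resp. $\mathcal{S}_{r}(\mathbb{R}^{n})$).

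The same miscomputation contaminates your converse direction: the error term is not $\langle f_{\varepsilon}/(\varepsilon^{\alpha}L(\varepsilon)),\varphi-q_{1/\varepsilon}\varphi\rangle$, because $\langle(q_{1/\varepsilon}f)(\mathbf{x}_{0}+\varepsilon\mathbf{x}),\varphi(\mathbf{x})\rangle\neq\langle f_{\varepsilon},q_{1/\varepsilon}\varphi\rangle$; the self-adjointness relation $\langle q_{\lambda}f,\varphi\rangle=\langle f,q_{\lambda}\varphi\rangle$ does not survive the recentering at $\mathbf{x}_{0}$ and the $\varepsilon$-dilation. The correct error term is $\langle f_{\varepsilon}/(\varepsilon^{\alpha}L(\varepsilon)),\,J_{\varepsilon}\varphi-\varphi\rangle$ with $J_{\varepsilon}$ as above. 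Once this identity is fixed, the rest of your plan is sound and coincides with the paper: \eqref{qbeq8} plus Banach--Steinhaus gives equicontinuity of $\{f_{\varepsilon}/(\varepsilon^{\alpha}L(\varepsilon))\}_{0<\varepsilon<1}$, so $\langle f_{\varepsilon},J_{\varepsilon}\varphi-\varphi\rangle=o(\varepsilon^{\alpha}L(\varepsilon))$, and both implications follow at once (note that \eqref{qbeq6} trivially implies \eqref{qbeq8}, so it suffices to prove the equivalence of \eqref{qbeq6} and \eqref{qbeq7} under \eqref{qbeq8}). As written, though, the proposal asserts rather than proves its central identity, and both explicit versions of it are false; that is a genuine gap that must be repaired along the lines just indicated.
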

\begin{remark} The relation (\ref{qbeq6}) holds strongly in $\mathcal{K}_{M,r+1}'(\mathbb{R}^{n})$ (resp. $\mathcal{S}'_{r+1}(\mathbb{R}^{n})$).
\end{remark}
\begin{proof} Observe that (\ref{qbeq6}) trivially implies (\ref{qbeq8}). Our problem is then to show the equivalence between (\ref{qbeq6}) and (\ref{qbeq7}) under the assumption (\ref{qbeq8}).  Define the kernel
$$
J_{\varepsilon}(\mathbf{x},\mathbf{y})=q_{ 2^{1/\varepsilon}\varepsilon,2^{1/\varepsilon}\mathbf{x}_{0}}(\mathbf{y},\mathbf{x})=\varepsilon^{n} 2^{n/\varepsilon}q_{0}(\varepsilon 2^{1/\varepsilon}\mathbf{y}+ 2^{1/\varepsilon}\mathbf{x}_{0}, \varepsilon 2^{1/\varepsilon}\mathbf{x}+ 2^{1/\varepsilon}\mathbf{x}_{0}), \ \ \  \mathbf{x},\mathbf{y}\in\mathbb{R}^{n},$$
and the operator
$$(J_{\varepsilon}\varphi)(\mathbf{x})= \int_{\mathbb{R}^{n}}J_{\varepsilon}(\mathbf{x},\mathbf{y}) \varphi(\mathbf{y})d\mathbf{y}, \  \  \ \varphi\in\mathcal{K}_{M,r}(\mathbb{R}^{n}) \ (\mbox{resp. } \varphi\in\mathcal{S}_{r}(\mathbb{R}^{n})).$$
Theorem \ref{t1} implies that $J_{\varepsilon}$ is an approximation of the identity in these spaces, i.e., for every test function $\lim_{\varepsilon\to 0^{+}}J_{\varepsilon}\varphi=\varphi$ in $\mathcal{K}_{M,r}(\mathbb{R}^{n})$ (resp.  in $\mathcal{S}_{r}(\mathbb{R}^{n})$).
The Banach-Steinhaus theorem implies that
$$\left\{\frac{f(\mathbf{x}_{0}+\varepsilon \mathbf{x})}{\varepsilon^{\alpha}L(\varepsilon)}:\: \varepsilon\in (0,1)\right\}$$
is an equicontinuous family of linear functionals, hence
$$
\lim_{\varepsilon\to 0^{+}} \left\langle \frac{f(\mathbf{x}_{0}+\varepsilon \mathbf{y})}{\varepsilon^{\alpha}L(\varepsilon)}, (J_{\varepsilon}\varphi)(\mathbf{y})-\varphi(\mathbf{y})\right\rangle=0,
$$
for each test function $\varphi$. Notice that
\begin{align*}
\langle (q_{1/\varepsilon}f)(\mathbf{x}_{0}+\varepsilon \mathbf{x}),\varphi(\mathbf{x})\rangle&= \langle \langle f(\mathbf{y}), q_{1/\varepsilon}(\mathbf{x}_{0}+\varepsilon \mathbf{x},\mathbf{y}) \rangle ,\varphi(\mathbf{x})\rangle
\\
&
= \langle f(\mathbf{x}_{0}+\varepsilon \mathbf{y}), (J_{\varepsilon}\varphi)(\mathbf{y})\rangle,
\end{align*}
and so,
$$
\left\langle \frac{(q_{1/\varepsilon}f)(\mathbf{x}_{0}+\varepsilon \mathbf{x})}{\varepsilon^{\alpha}L(\varepsilon)}, \varphi(\mathbf{x})\right\rangle=\left\langle \frac{f(\mathbf{x}_{0}+\varepsilon \mathbf{y})}{\varepsilon^{\alpha}L(\varepsilon)}, \varphi(\mathbf{y})\right\rangle+o(1) \  \  \ \mbox{as }\varepsilon\to0^{+},
$$
which yields the desired equivalence.
\end{proof}

We illustrate Theorem \ref{qbth3} with a application to the determination of (symmetric) $\alpha$-dimensional densities of measures. Let $\alpha>0$ and let $\mu$ be a Radon measure. Following \cite[Def. 2.14]{De lellis}, we say that $\mathbf{x}_{0}$ is an $\alpha$-density point of $\mu$ if the limit
$$
\theta^{\alpha}(\mu,\mathbf{x}_{0}):=\lim_{\varepsilon\to 0^{+}} \frac{\mu(B(\mathbf{x}_{0},\varepsilon))}{\omega_{\alpha}\varepsilon^{\alpha}}
$$
exists (and is finite), where the normalizing constant is $\omega_{\alpha}=\pi^{\alpha/2}\Gamma(\alpha+1/2)$. The number $\theta^{\alpha}(\mu,\mathbf{x}_{0})$ is called the (symmetric) $\alpha$-density of $\mu$ at $\mathbf{x}_{0}$. The ensuing proposition tells us that if a positive measure has a certain quasiasymptotic behavior at $\mathbf{x}_{0}$, then $\theta^{\alpha}(\mu,\mathbf{x}_{0})$ exists.
\begin{proposition}\label{qbp1} Let $\mu$ be a positive Radon measure and let $\alpha>0$. If $\mu$ has the quasiasymptotic behavior
\begin{equation}
\label{qbeq9} \mu(\mathbf{x}_{0}+\varepsilon \mathbf{x})= \varepsilon^{\alpha-n}L(\varepsilon)\upsilon(\mathbf{x})+o(\varepsilon^{\alpha-n}L(\varepsilon)) \ \ \ \mbox{ as }\varepsilon\to0^{+}, \  \ \ \mbox{ in }\mathcal{D}'(\mathbb{R}^{n}), 
\end{equation}
then 
\begin{equation}\label{qbeq10}
\lim_{\varepsilon\to 0^{+}}\frac{ \mu(\mathbf{x}_{0}+\varepsilon B)}{\varepsilon^{\alpha}L(\varepsilon)}=\upsilon(B), \ \ \ \mbox{for every bounded open set }B.
\end{equation}
In particular, if $L$ is identically 1 and $d\upsilon(\mathbf{x})=\ell |\mathbf{x}|^{\alpha-n}d\mathbf{x}$, then $\mathbf{x}_{0}$ is an $\alpha$-density point of $\mu$ and in fact
\begin{equation}\label{qbeq11}
\theta^{\alpha}(\mu,\mathbf{x}_{0})=\frac{\omega_{n}\ell}{\alpha\omega_{\alpha}}.
\end{equation}
\end{proposition}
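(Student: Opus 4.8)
The plan is to deduce (\ref{qbeq10}) from the quasiasymptotic relation (\ref{qbeq9}) by evaluating the latter on indicators of sets rather than on smooth test functions, exploiting the positivity of $\mu$. A change of variables gives, for any bounded Borel set $B$,
$$
\frac{\mu(\mathbf{x}_{0}+\varepsilon B)}{\varepsilon^{\alpha}L(\varepsilon)}=\left\langle\frac{\mu(\mathbf{x}_{0}+\varepsilon \mathbf{x})}{\varepsilon^{\alpha-n}L(\varepsilon)},\chi_{B}(\mathbf{x})\right\rangle,
$$
where $\chi_{B}$ denotes the indicator function of $B$. Since $\mu\geq0$, the distribution $\upsilon$ occurring in (\ref{qbeq9}) is a positive Radon measure with $\upsilon(aE)=a^{\alpha}\upsilon(E)$ for all $a>0$ and all Borel $E$ (cf. the remark before Example \ref{example 5}). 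Introduce the positive Radon measures $\mu_{\varepsilon}$ defined by $\mu_{\varepsilon}(E)=\mu(\mathbf{x}_{0}+\varepsilon E)/(\varepsilon^{\alpha}L(\varepsilon))$; then (\ref{qbeq9}) says exactly that $\langle\mu_{\varepsilon},\varphi\rangle\to\langle\upsilon,\varphi\rangle$ for every $\varphi\in\mathcal{D}(\mathbb{R}^{n})$, and (\ref{qbeq10}) is the claim that this persists for $\varphi=\chi_{B}$.

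The first step is to upgrade the convergence to vague convergence of Radon measures, i.e., to all $\psi\in C_{c}(\mathbb{R}^{n})$. For each compact $K$, testing against a fixed $\varphi\in\mathcal{D}(\mathbb{R}^{n})$ with $\chi_{K}\leq\varphi\leq1$ gives $\mu_{\varepsilon}(K)\leq\langle\mu_{\varepsilon},\varphi\rangle\to\langle\upsilon,\varphi\rangle$, whence $\sup_{0<\varepsilon\leq1}\mu_{\varepsilon}(K)<\infty$; approximating an arbitrary $\psi\in C_{c}(\mathbb{R}^{n})$ uniformly by test functions supported in a fixed compact neighbourhood of $\operatorname{supp}\psi$ and bounding the error by $\|\psi-\varphi\|_{\infty}$ times these uniformly bounded masses then yields $\langle\mu_{\varepsilon},\psi\rangle\to\langle\upsilon,\psi\rangle$. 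Once vague convergence is available, approximating $\chi_{B}$ from below and $\chi_{\overline{B}}$ from above by functions in $C_{c}(\mathbb{R}^{n})$ (the portmanteau argument) gives $\liminf_{\varepsilon\to0^{+}}\mu_{\varepsilon}(B)\geq\upsilon(B)$ and $\limsup_{\varepsilon\to0^{+}}\mu_{\varepsilon}(\overline{B})\leq\upsilon(\overline{B})$; combined with $\mu_{\varepsilon}(B)\leq\mu_{\varepsilon}(\overline{B})$ this produces
$$
\upsilon(B)\ \leq\ \liminf_{\varepsilon\to0^{+}}\mu_{\varepsilon}(B)\ \leq\ \limsup_{\varepsilon\to0^{+}}\mu_{\varepsilon}(B)\ \leq\ \upsilon(\overline{B}).
$$
I expect the main obstacle to be closing this sandwich, which requires the boundary $\partial B$ to be $\upsilon$-null.

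For the balls that are needed in the statement this is immediate from homogeneity: writing $\Phi(\rho):=\upsilon(B(\mathbf{0},\rho))$ and $\Psi(\rho):=\upsilon(\overline{B(\mathbf{0},\rho)})$, homogeneity gives $\Phi(\rho)=\rho^{\alpha}\Phi(1)$ and $\Psi(\rho)=\rho^{\alpha}\Psi(1)$, while continuity from above of the Radon measure $\upsilon$ gives $\Psi(\rho)=\lim_{\rho'\downarrow\rho}\Phi(\rho')=\rho^{\alpha}\Phi(1)=\Phi(\rho)$; hence $\upsilon(\partial B(\mathbf{0},\rho))=\Psi(\rho)-\Phi(\rho)=0$, so $\mu_{\varepsilon}(B)\to\upsilon(B)$ whenever $B$ is a ball centred at the origin. (More generally, the displayed sandwich still yields (\ref{qbeq10}) as soon as $\upsilon(\partial B)=0$; using the polar representation $d\upsilon=r^{\alpha-1}\,dr\,dG(\omega)$ valid for any $\alpha$-homogeneous positive Radon measure with $\alpha>0$, one checks that every sphere, and hence the boundary of every ball, is $\upsilon$-null, which already covers the case used below.)

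Finally, for the ``in particular'' part take $L\equiv1$, $d\upsilon(\mathbf{x})=\ell|\mathbf{x}|^{\alpha-n}\,d\mathbf{x}$, and $B=B(\mathbf{0},1)$ in (\ref{qbeq10}). Since $\mathbf{x}_{0}+\varepsilon B(\mathbf{0},1)=B(\mathbf{x}_{0},\varepsilon)$, this gives $\mu(B(\mathbf{x}_{0},\varepsilon))=\varepsilon^{\alpha}\upsilon(B(\mathbf{0},1))+o(\varepsilon^{\alpha})$ as $\varepsilon\to0^{+}$, so $\theta^{\alpha}(\mu,\mathbf{x}_{0})=\upsilon(B(\mathbf{0},1))/\omega_{\alpha}$ exists; it then only remains to evaluate $\upsilon(B(\mathbf{0},1))=\ell\int_{B(\mathbf{0},1)}|\mathbf{x}|^{\alpha-n}\,d\mathbf{x}$ by a direct computation in polar coordinates to arrive at (\ref{qbeq11}).
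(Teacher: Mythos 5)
Your argument is correct and, at its core, it is the same mechanism the paper uses: positivity of $\mu$ plus squeezing the indicator $\chi_{B}$ between test functions whose $\upsilon$-integrals differ by an arbitrarily small amount. The paper does this in one stroke, choosing $\varphi_{1}\leq\chi_{B}\leq\varphi_{2}$ with $\operatorname{supp}\varphi_{1}\subseteq B$, $\varphi_{2}=1$ on $B$, $\operatorname{supp}\varphi_{2}\subseteq\Omega_{2}$ and $\upsilon(\Omega_{2}\setminus\Omega_{1})<\sigma$; you instead pass first to vague convergence of the measures $\mu_{\varepsilon}$ (via the uniform local mass bound) and then invoke the portmanteau inequalities. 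The extra step costs little and changes nothing essential.

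The one genuine point of divergence is your explicit restriction to sets with $\upsilon(\partial B)=0$. Note that the paper's own proof tacitly makes the same assumption: the requirement $\upsilon(\Omega_{2}\setminus\Omega_{1})<\sigma$ with $\overline{\Omega}_{1}\subset B\subset\overline{B}\subset\Omega_{2}$ forces $\upsilon(\partial B)\leq\upsilon(\Omega_{2}\setminus\Omega_{1})<\sigma$ for every $\sigma$, i.e.\ it is only possible when $\partial B$ is $\upsilon$-null. So the literal claim in (\ref{qbeq10}) ``for every bounded open set'' really needs this boundary condition (one can build positive measures with the quasiasymptotic behavior (\ref{qbeq9}) for which $\mu_{\varepsilon}(B)$ fails to converge to $\upsilon(B)$ along a sequence, by letting mass accumulate just inside $\varepsilon B$ toward a boundary of positive $\upsilon$-measure), and your sandwich $\upsilon(B)\leq\liminf\mu_{\varepsilon}(B)\leq\limsup\mu_{\varepsilon}(B)\leq\upsilon(\overline{B})$ is exactly what survives in general. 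Your homogeneity argument (or the polar representation $d\upsilon=r^{\alpha-1}\,dr\,dG(\omega)$) showing that spheres are $\upsilon$-null correctly disposes of the case actually used, namely $B=B(\mathbf{0},1)$, and in the ``in particular'' situation $\upsilon$ is absolutely continuous anyway; so nothing is lost for (\ref{qbeq11}) or for Corollary \ref{qbc2}. The only remaining loose end in your write-up is the unevaluated integral $\ell\int_{B(\mathbf{0},1)}|\mathbf{x}|^{\alpha-n}d\mathbf{x}=\ell\,\omega_{n}/\alpha$, which is the same routine polar-coordinates computation the paper performs.
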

\begin{proof}
 By translating, we may assume that $\mathbf{x}_{0}=0$. The quasiasymptotic behavior (\ref{qbeq10}) then means that
\begin{equation}
\label{qbeq12} \int_{\mathbb{R}^{n}}\varphi\left(\frac{\mathbf{x}}{\varepsilon}\right)d\mu(\mathbf{x})\sim \varepsilon^{\alpha}L(\varepsilon)\int_{\mathbb{R}^{n}} \varphi(\mathbf{x})d\upsilon(\mathbf{x}) \   \   \ \mbox{as } \varepsilon\to0^{+},
\end{equation}
for each $\varphi\in\mathcal{D}(\mathbb{R}^{n})$. Let $\sigma>0$ be arbitrary. Find open sets $\Omega_{1}$ and $\Omega_{2}$ such that $\overline{\Omega}_{1}\subset B\subset \overline{B}\subset \Omega_{2}$ and $\upsilon(\Omega_{2}\setminus \Omega_{1})<\sigma$. We now select suitable test functions in (\ref{qbeq12}).  Find $\varphi_{1},\varphi_{2}\in\mathcal{D}(\mathbb{R}^{n})$ such that $0\leq \varphi_{j}\leq 1$, $j=1,2$, $\varphi_{2}(x)=1$ for $x\in B$, $\operatorname*{supp}\varphi_{2}\subseteq \Omega_{2}$, $\varphi_{1}(x)=1$ for $x\in 
\Omega_{1}$, and $\operatorname*{supp}\varphi_{1}\subseteq B$. Then,
\begin{align*}
\limsup_{\varepsilon\to 0^{+}} \frac{\mu(\varepsilon B)}{\varepsilon^{\alpha}L(\varepsilon)}&\leq \lim_{\varepsilon\to0^{+}}\frac{1}{\varepsilon^{\alpha}L(\varepsilon)}\int_{\mathbb{R}^{n}}\varphi_{2}\left(\frac{\mathbf{x}}{\varepsilon}\right)d\mu(\mathbf{x})
= \int_{\mathbb{R}^{n}}\varphi_{2}(\mathbf{x})d\upsilon(\mathbf{x})\leq \upsilon(\Omega_{2})
\\
&
\leq
\upsilon(B)+\sigma.
\end{align*}
Likewise, using $\varphi_{1}$ in (\ref{qbeq12}), one concludes that 
$$
\liminf_{\varepsilon\to 0^{+}} \frac{\mu(\varepsilon B)}{\varepsilon^{\alpha}L(\varepsilon)}\geq \int_{\mathbb{R}^{n}} \varphi_{1}(\mathbf{x})d\upsilon(\mathbf{x}) \geq \upsilon(B)-\sigma \:.
$$
Since $\sigma$ was arbitrary, we obtain (\ref{qbeq10}). The last assertion follows by taking $B=B(\mathbf{0},1)$ and noticing that in this case $\upsilon(B(\mathbf{0},1))=\ell \int_{|x|<1}|\mathbf{x}|^{\alpha-n}d\mathbf{x}=\ell \omega_{n}/\alpha $, which yields (\ref{qbeq11}).
\end{proof}

 We end this article with an MRA criterion for $\alpha$-density points of positive measures.

\begin{corollary}\label{qbc2}Suppose that the MRA is $(M,r)$-regular (resp. $r$-regular) and let $\mu$ be a positive Radon measure that satisfies (\ref{pveq14}) (resp. (\ref{eq i 3})). If
\begin{equation}
\label{qbeq13} \mu(B(\mathbf{x}_{0},\varepsilon))= O(\varepsilon^{\alpha}) \  \  \ \mbox{as }\varepsilon \to 0^{+},
\end{equation}
and
\begin{equation}
\label{qbeq14} \lim_{\varepsilon\to 0^{+}} \frac{(q_{\frac{1}{\varepsilon}}\mu)(\mathbf{x}_{0}+\varepsilon \mathbf{x})}{\varepsilon^{\alpha-n}}= \ell|\mathbf{x}|^{\alpha-n} \ \  \ \mbox{weakly$^{\ast}$ in } \mathcal{K}_{M,r}'(\mathbb{R}^{n}) \ (\mbox{resp. } \mathcal{S}'_{r}(\mathbb{R}^{n})),
\end{equation}
then $\mu$ possesses an $\alpha$-density at $\mathbf{x}_{0}$, given by (\ref{qbeq11}).
\end{corollary}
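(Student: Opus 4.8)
The plan is to deduce the statement by combining Theorem \ref{qbth3} with the last assertion of Proposition \ref{qbp1}, the only real work being a single growth estimate. First I would record that $\mu$ lives in the relevant space: by \eqref{pveq14} (resp. \eqref{eq i 3}) one has $|\langle\mu,\varphi\rangle|\le\nu_{0,k}(\varphi)\int_{\mathbb{R}^{n}}e^{-M(k\mathbf{x})}d|\mu|(\mathbf{x})$ (resp. $|\langle\mu,\varphi\rangle|\le\rho_{0,k}(\varphi)\int_{\mathbb{R}^{n}}(1+|\mathbf{x}|)^{-k}d|\mu|(\mathbf{x})$), whence $\mu\in\mathcal{K}'_{M,0}(\mathbb{R}^{n})\subseteq\mathcal{K}'_{M,r}(\mathbb{R}^{n})$ (resp. $\mu\in\mathcal{S}'_{0}(\mathbb{R}^{n})\subseteq\mathcal{S}'_{r}(\mathbb{R}^{n})$), so $q_{\lambda}\mu$ makes sense and Theorem \ref{qbth3} applies to $f=\mu$ with $L\equiv1$, with the degree $\alpha-n$ playing the role of the parameter $\alpha$ there, and with $g(\mathbf{x})=\ell|\mathbf{x}|^{\alpha-n}$ (which, since $\alpha>0$, is locally integrable and homogeneous of degree $\alpha-n$). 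With these identifications the hypothesis \eqref{qbeq14} is precisely condition \eqref{qbeq7} of that theorem, so everything reduces to verifying the boundedness condition \eqref{qbeq8}, namely that for every $\varphi\in\mathcal{K}_{M,r}(\mathbb{R}^{n})$ (resp. $\varphi\in\mathcal{S}_{r}(\mathbb{R}^{n})$)
$$\langle\mu(\mathbf{x}_{0}+\varepsilon\mathbf{x}),\varphi(\mathbf{x})\rangle=\varepsilon^{-n}\int_{\mathbb{R}^{n}}\varphi\Big(\frac{\mathbf{y}-\mathbf{x}_{0}}{\varepsilon}\Big)d\mu(\mathbf{y})=O(\varepsilon^{\alpha-n})\qquad\text{as }\varepsilon\to0^{+},$$
equivalently, that $\int_{\mathbb{R}^{n}}|\varphi((\mathbf{y}-\mathbf{x}_{0})/\varepsilon)|\,d\mu(\mathbf{y})=O(\varepsilon^{\alpha})$.

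To prove this I would split the integral over $|\mathbf{y}-\mathbf{x}_{0}|\le1$ and $|\mathbf{y}-\mathbf{x}_{0}|>1$. Since $\mu$ is finite on bounded sets, \eqref{qbeq13} upgrades to $\mu(B(\mathbf{x}_{0},R))\le C_{1}R^{\alpha}$ for all $R$ in some fixed interval $(0,R_{0}]$ with $R_{0}>1$. On the inner region, using the decay bounds $|\varphi(\mathbf{z})|\le C_{m}(1+|\mathbf{z}|)^{-m}$ (resp. $|\varphi(\mathbf{z})|\le C_{m}e^{-M(m\mathbf{z})}$), valid for every $m$, together with a dyadic decomposition of $B(\mathbf{x}_{0},1)$ into the central ball $\{|\mathbf{y}-\mathbf{x}_{0}|\le\varepsilon\}$ and coronas $\{2^{j}\varepsilon<|\mathbf{y}-\mathbf{x}_{0}|\le2^{j+1}\varepsilon\}$, the inner contribution is dominated by $C_{m}\mu(B(\mathbf{x}_{0},\varepsilon))+C_{m}\sum_{j\ge0}2^{-jm}\mu(B(\mathbf{x}_{0},2^{j+1}\varepsilon))\le C\varepsilon^{\alpha}\big(1+\sum_{j\ge0}2^{j(\alpha-m)}\big)=O(\varepsilon^{\alpha})$ once $m>\alpha$ (in the $M$-growth case the series $\sum_{j\ge0}2^{j\alpha}e^{-M(m2^{j})}$ converges trivially). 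On the outer region the point is that for any $N$ one has $|\varphi((\mathbf{y}-\mathbf{x}_{0})/\varepsilon)|\le c_{N}\varepsilon^{N}w(\mathbf{y})$ uniformly for $|\mathbf{y}-\mathbf{x}_{0}|>1$ and $\varepsilon<1/2$, with $w$ a fixed $\mu$-integrable function: in the tempered case this follows from $(1+|\mathbf{y}-\mathbf{x}_{0}|/\varepsilon)^{-N}\le\varepsilon^{N}|\mathbf{y}-\mathbf{x}_{0}|^{-N}\le C\varepsilon^{N}(1+|\mathbf{y}|)^{-N}$ and \eqref{eq i 3}, while in the $\mathcal{K}'_{M}$ case one writes $1/\varepsilon=(1/\varepsilon-1)+1$ and applies property $(3)$ of $M$ to split $M(m(\mathbf{y}-\mathbf{x}_{0})/\varepsilon)\ge M(m(1/\varepsilon-1)(\mathbf{y}-\mathbf{x}_{0}))+M(m(\mathbf{y}-\mathbf{x}_{0}))$, the first summand producing the decaying factor $e^{-Am(1/\varepsilon-1)}=o(\varepsilon^{N})$ via $M(t)\ge At$ and the second dominating $e^{-M(k\mathbf{y})}$ via property $(2)$ when $m=2k$. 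Choosing $m,N>\alpha$ then yields $O(\varepsilon^{\alpha})$, establishing \eqref{qbeq8}.

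With \eqref{qbeq7} and \eqref{qbeq8} in hand, Theorem \ref{qbth3} gives \eqref{qbeq6}, i.e. $\varepsilon^{n-\alpha}\mu(\mathbf{x}_{0}+\varepsilon\mathbf{x})\to\ell|\mathbf{x}|^{\alpha-n}$ weakly$^{\ast}$ in $\mathcal{K}'_{M,r}(\mathbb{R}^{n})$ (resp. $\mathcal{S}'_{r}(\mathbb{R}^{n})$), hence also in $\mathcal{D}'(\mathbb{R}^{n})$ because $\mathcal{D}(\mathbb{R}^{n})\subset\mathcal{K}_{M,r}(\mathbb{R}^{n})$ (resp. $\mathcal{D}(\mathbb{R}^{n})\subset\mathcal{S}_{r}(\mathbb{R}^{n})$). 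This is exactly the quasiasymptotic behavior \eqref{qbeq9} with $L$ identically $1$ and $d\upsilon(\mathbf{x})=\ell|\mathbf{x}|^{\alpha-n}d\mathbf{x}$, so the last assertion of Proposition \ref{qbp1} applies at once and delivers that $\mathbf{x}_{0}$ is an $\alpha$-density point of $\mu$ with $\theta^{\alpha}(\mu,\mathbf{x}_{0})=\omega_{n}\ell/(\alpha\omega_{\alpha})$, which is \eqref{qbeq11}.

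I expect the difficulty to be concentrated entirely in the derivation of the $O$-estimate \eqref{qbeq8} from the upper $\alpha$-density bound \eqref{qbeq13}: one must interpolate between the prescribed local scaling of $\mu$ near $\mathbf{x}_{0}$ and the prescribed growth of $\mu$ at infinity, and, in the setting of distributions of $M$-exponential growth, extract the decaying power of $\varepsilon$ from the far-field integral by exploiting the structural conditions $(2)$ and $(3)$ on $M$; the rest of the argument merely assembles results already at our disposal.
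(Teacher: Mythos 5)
Your proposal is correct and follows the same overall route as the paper's proof: reduce everything to the boundedness condition \eqref{qbeq8} for $f=\mu$ with degree $\alpha-n$ and $L\equiv 1$, feed this together with \eqref{qbeq14} into Theorem \ref{qbth3}, and conclude via Proposition \ref{qbp1}. The only place where you diverge is the verification of \eqref{qbeq8}. The paper splits $\mu=\mu_{1}+\mu_{2}$ with $\mu_{1}$ supported in $B(\mathbf{x}_{0},1)$, disposes of the far part through Proposition \ref{pvp1}, and estimates the local part using only the decay $|\varphi(\mathbf{x})|\leq C|\mathbf{x}|^{-\alpha}$ together with the claim that \eqref{qbeq13} forces $\int_{B(\mathbf{x}_{0},1)}|\mathbf{x}-\mathbf{x}_{0}|^{-\alpha}d\mu(\mathbf{x})<\infty$; you instead run a dyadic-shell argument with decay exponent $m>\alpha$ (and re-prove the far-field bound by hand rather than quoting Proposition \ref{pvp1}). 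Your variant is in fact the more robust one: the finiteness claim is not a consequence of \eqref{qbeq13} alone (for $d\mu=|\mathbf{x}-\mathbf{x}_{0}|^{\alpha-n}d\mathbf{x}$ near $\mathbf{x}_{0}$ that integral diverges logarithmically, and working with decay exponent exactly $\alpha$ would only give $O(\varepsilon^{\alpha-n}\log(1/\varepsilon))$), whereas taking $m>\alpha$ makes the shells sum geometrically and yields the clean $O(\varepsilon^{\alpha-n})$ bound. Your far-field estimates are also correct: in the tempered case one needs $N\geq\max(k,\alpha)$ so that $(1+|\mathbf{y}|)^{-N}$ is $\mu$-integrable, which you implicitly do, and in the $M$-case the splitting $1/\varepsilon=(1/\varepsilon-1)+1$ combined with properties (3), (2) of $M$ and $M(t)\geq At$ is exactly what is needed. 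The only cosmetic remark is that the outer-region computation duplicates Proposition \ref{pvp1}, which you could simply cite, as the paper does.
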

\begin{proof}
Let us show that (\ref{qbeq13}) leads to (\ref{qbeq8}) with $f=\mu$ and $\alpha$ replaced by $\alpha-n$. Indeed, write $\mu=\mu_{1}+\mu_{2}$, where $\mu_{1}(V):=\mu(V\cap B(\mathbf{x}_{0},1))$ for every Borel set $V$. Let $\varphi\in\mathcal{K}_{M,r}(\mathbb{R}^{n})$ (resp.  $\varphi\in \mathcal{S}_{r}(\mathbb{R}^{n})$). Set $C=\sup_{x\in\mathbb{R}^{n}} |x|^{\alpha}|\varphi(x)|<\infty$ . The condition (\ref{qbeq13}) implies that $\int_{\mathbb{R}^{n}}|\mathbf{x}-\mathbf{x}_{0}|^{-\alpha}d\mu(\mathbf{x})<\infty$. Using Proposition \ref{pvp1} and (\ref{qbeq13}),
\begin{align*}
|\langle \mu(\mathbf{x}_{0}+\varepsilon \mathbf{x}),\varphi(\mathbf{x})\rangle|&\leq \varepsilon^{-n}\int_{\mathbb{R}^{n}} \left|\varphi\left(\frac{\mathbf{x}-\mathbf{x}_{0}}{\varepsilon}\right)\right|d\mu_{1}(\mathbf{x})+O(\varepsilon^{\alpha})
\\
&
=\varepsilon^{-n}\int_{\varepsilon \leq|\mathbf{x}-\mathbf{x}_{0}|}\left|\varphi\left(\frac{\mathbf{x}-\mathbf{x}_{0}}{\varepsilon}\right)\right|d\mu_{1}(\mathbf{x})+O(\varepsilon^{\alpha-n})
\\
&
\leq C\varepsilon^{\alpha-n} \int_{\mathbb{R}^{n}}\frac{d\mu(\mathbf{x})}{|\mathbf{x}-\mathbf{x}_{0}|^{\alpha}} +O(\varepsilon^{\alpha-n})
= O(\varepsilon^{\alpha-n}).
\end{align*}
Theorem \ref{qbth3} implies that $\mu$ has the quasiasymptotic behavior
$$\mu(\mathbf{x}_{0}+\varepsilon \mathbf{x})= \ell |\varepsilon\mathbf{x}|^{\alpha-n}+o(\varepsilon^{\alpha-n}) \ \ \ \mbox{ as }\varepsilon\to0^{+}, \  \ \ \mbox{ in }\mathcal{D}'(\mathbb{R}^{n}), 
$$
and the conclusion then follows from Proposition \ref{qbp1}.
\end{proof}
 \end{document}